\def\A{\mathcal{A}_W} %% Abelian group of trade W
\def\B{\mathcal{A}_B} %% Abelian group of trade B
\def\G{\mathcal{G}_{W,B}} %% triangulation of BItrade
\def\M{\mathcal{M}_W} %% presentation matrix of abelian group of trade W
\def\MB{\mathcal{M}_B} %% presentation matrix of abelian group of trade B
\def\NW{\mathcal{N}_W^*} %% non Abelian group of trade W
\def\NB{\mathcal{N}_B^*} %% non Abelian group of trade B
\newtheorem{theorem}{Theorem}
\newtheorem{lemma}{Lemma}
\newtheorem{corollary}{Corollary}
\newtheorem{conjecture}{Conjecture}
\newtheorem{definition}{Definition}
\def\twid{\char"7E }
\newtheorem{example}{Example}[section]
\def\clap#1{\hbox to 0pt{\hss #1\hss}}
\providecommand{\boxed}[1]{\clap{\framebox{\phantom{0}}}#1}
\providecommand{\looped}[1]{\clap{\unitlength1ex\begin{picture}(0,0)    
\put(0.6,0.8){\circle{2.8}}
\end{picture}}#1}   %to circle entries
\def\sspacer{\vline depth 6pt height 0pt width 0pt}
\def\nspacer{\vline depth 0pt height 15pt width 0pt}
\def\eref#1{$(\ref{#1})$}
\def\egref#1{Example~$\ref{#1}$}
\def\sref#1{\S$\ref{#1}$}
\def\dref#1{Definition~$\ref{#1}$}
\def\lref#1{Lemma~$\ref{#1}$}
\def\tref#1{Theorem~$\ref{#1}$}
\def\fref#1{Figure~$\ref{#1}$}
\def\cref#1{Corollary~$\ref{#1}$}
\def\cjref#1{Conjecture~$\ref{#1}$}
\def\per{{\rm per}}
\def\Z{\mathbb{Z}}
\def\V{\mathcal{V}}
\def\<{\big\langle}
\def\>{\big\rangle}
\def\iso{\cong}
\def\id{\varepsilon}
\begin{document}

\title{Latin trades in groups defined on \\
planar triangulations
}

\author{Nicholas J.\ Cavenagh and Ian M.\ Wanless\thanks{Research supported 
by ARC grant DP0662946.} \\
\small School of Mathematical Sciences\\[-0.5ex]
\small Monash University\\[-0.5ex]
\small Vic 3800, Australia\\
\small\tt \{nicholas.cavenagh,ian.wanless\} \ @sci.monash.edu.au}

\date{}

\maketitle

\begin{abstract}
For a finite triangulation of the plane with faces properly coloured 
white and black, let $\A$
be the abelian group constructed by labelling the vertices with
commuting indeterminates and adding relations which say that the
labels around each white triangle add to the identity. We show that
$\A$ has free rank exactly two. Let $\A^*$ be the torsion subgroup of
$\A$, and $\B^*$ the corresponding group for the black triangles. We
show that $\A^*$ and $\B^*$ have the same order, and conjecture that
they are isomorphic.

For each spherical latin trade $W\!$, we show there is a unique disjoint
mate $B$ such that $(W,B)$ is a connected and separated bitrade.  The
bitrade $(W,B)$ is associated with a two-colourable planar
triangulation and we show that $W$ can be embedded in $\A^*$, thereby
proving a conjecture due to Cavenagh and Dr\'apal. The proof involves
constructing a $(0,1)$ presentation matrix whose permanent and
determinant agree up to sign. The Smith Normal Form of this matrix
determines $\A^*$, so there is an efficient algorithm to construct the
embedding.  Contrasting with the spherical case, for each genus
$g\ge1$ we construct a latin trade which is not embeddable in any
group and another that is embeddable in a cyclic group.

We construct a sequence of spherical latin trades which cannot be
embedded in any family of abelian groups whose torsion ranks are
bounded. Also, we show that any trade that can be embedded in a
finitely generated abelian group can be embedded in a finite abelian
group. As a corollary, no trade can be embedded in a free abelian
group.

\bigskip\noindent
Keywords: latin trade; bitrade; latin square; abelian group;
planar triangulation; Smith normal form; permanent.
\end{abstract}

\section{Introduction}\label{s:intro} 

This paper demonstrates connections between 2-colourable
triangulations of the plane, latin trades, matrices whose permanent
and determinant agree up to sign, Smith normal forms and finite
abelian groups. It can thus be seen to link topological graph theory,
combinatorial designs, linear algebra and group theory.

Latin trades describe the difference between two latin squares of the
same order. Such differences are implicitly observed in many different
papers, some of them very old. The explicit theory of latin trades
arose from a shift in perspective, when the set of differences became
an object in its own right.  In this sense the earliest article on
latin trades is \cite{DrKe1}, where they are referred to as {\em
  exchangeable partial groupoids}.  Later, latin trades became of
interest to researchers of critical sets (minimal defining sets of
latin squares) \cite{BaRe2},\cite{codose},\cite{Ke2}.  Recently, the
theory of latin trades has found intersection with topology
\cite{group2}, geometry \cite{Dr1} and group theory \cite{CaHaDr}.  In
particular, latin trades may be interpreted topologically and under
certain structural conditions, assigned an integer genus.

In a sense we will make precise in \sref{s:embed}, a latin
trade $W$ is embeddable in a group $G$ if you can find a copy of $W$
in the Cayley table of $G$. In certain applications (e.g. \cite{EW,WW})
it is useful to find a latin square which differs only slightly from
a group. Finding such examples is equivalent to finding a small 
trade embedded in the group.

Dr\'apal \cite{Dr1} showed that certain partitionings of integer-sided
equilateral triangles into smaller ones imply the existence of latin
trades of genus $0$ which embed in the group $\Z_n$.  Cavenagh
\cite{Ca4} studied embeddings of $3$-homogeneous latin trades (a type
of latin trade with genus $1$) into abelian groups.  These results
motivate the question of whether there is any relationship between the
genus of a latin trade and its embeddability into a group.
\lref{l:doesembed} and \lref{l:doesnotembed} combine to show:

\begin{theorem}\label{t:bothhappen}
For every $g\ge1$ there is a latin trade of genus $g$ which
cannot be embedded in any group, and another latin trade of genus
$g$ which can be embedded in a cyclic group.
\end{theorem}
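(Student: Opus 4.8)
\medskip\noindent
The plan is to prove \lref{l:doesembed} and \lref{l:doesnotembed} by explicit construction, using a connected-sum operation on bitrades under which genus is additive to pass from small genus to arbitrary $g\ge1$. Given bitrades $(W_1,B_1)$ and $(W_2,B_2)$ on connected surfaces of genera $g_1,g_2$, relabel so their row, column and symbol sets are pairwise disjoint, delete an open \emph{black} triangle from the first surface and an open \emph{white} triangle from the second, and glue the two triangular boundaries by an orientation-reversing homeomorphism matching rows to rows, columns to columns and symbols to symbols. One checks that the bipartite face structure survives, that $\chi$ of the result is $\chi_1+\chi_2-2$, so that we obtain a bitrade of genus exactly $g_1+g_2$; and since only a black triangle was removed from the first surface, all of $W_1$ remains, so the new white trade contains $W_1$ as a subtrade with its rows, columns and symbols still pairwise distinct.

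For \lref{l:doesembed}, take as building block a single genus-$1$ latin trade $Q$ that embeds in a cyclic group; a small such $Q$ can be written down explicitly inside some $\Z_n$ and its genus confirmed by an Euler-characteristic count. Chaining $g$ relabelled copies $Q_1,\dots,Q_g$ by successive amalgamations gives a connected genus-$g$ bitrade $W$. To embed $W$ in a cyclic group, realise the part of $W$ coming from $Q_i$ via the given embedding of $Q_i$ into $\Z_{n_i}$, for pairwise coprime $n_1,\dots,n_g$, placing it in the $i$-th coordinate of $\Z_{n_1}\times\cdots\times\Z_{n_g}\cong\Z_{n_1\cdots n_g}$ and translating the copies so that vertices identified by the amalgamations receive equal images, which is possible by the Chinese Remainder Theorem. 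The relations $\rho(r)+\gamma(c)=\sigma(s)$ then hold coordinatewise, and a copy-dependent shift in the remaining coordinates makes the combined map injective, since two distinct unmerged vertices either lie in one copy and already differ there or lie in different copies and differ in a shifted coordinate.

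For \lref{l:doesnotembed}, the crux is to produce a single genus-$1$ latin trade $W_0$ that embeds in no group at all. As made precise in \sref{s:embed}, $W_0$ embeds in a group $G$ iff its rows, columns and symbols inject into $G$ so that each triple $(r,c,s)$ becomes a relation $\rho(r)\gamma(c)=\sigma(s)$; eliminating the symbol generators, $W_0$ embeds in \emph{some} group iff, in the group presented by the rows and columns subject to $rc=r'c'$ for every pair of cells carrying a common symbol, the images of the rows, of the columns and of the products $rc$ are all distinct. I would exhibit such a $W_0$ together with its disjoint mate and trace a cycle of these defining relations which, after cancellation, forces $\rho(r)=\rho(r')$ for two distinct rows of $W_0$; the cycle is necessarily long, since a coincidence of two entries of $W_0$ within a single line is forbidden in a partial latin square, so no shorter collapse is available. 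That $W_0$ is a latin trade whose associated surface is connected of genus $1$ is then a finite check, and amalgamating $W_0$ with $g-1$ copies of any genus-$1$ block gives, for each $g\ge1$, a genus-$g$ bitrade whose white trade contains $W_0$ and so embeds in no group; together with \lref{l:doesembed} this yields \tref{t:bothhappen}.

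The main obstacle is the construction and verification of $W_0$: one must find a configuration whose relation cycle forces an identification \emph{in every group}, non-abelian ones included, while keeping it small enough that the genus-$1$ verification stays transparent. Secondary points needing care are confirming that the connected-sum operation genuinely produces a bitrade --- preservation of bipartiteness and correct matching of vertex types across the glued triangle --- and that the genus accumulates to exactly $g$, with no drop from an unforeseen coincidence among the merged vertices.
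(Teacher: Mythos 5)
Your plan rests on two pillars, and both have genuine holes. The first is the genus-$1$ trade $W_0$ that embeds in no group: you never exhibit it, and everything on the non-embeddable side of \tref{t:bothhappen} depends on it. This is not a routine verification you can defer --- it is the entire content of \lref{l:doesnotembed} in the base case, and your ``trace a cycle of relations in the universal group'' recipe is only a search strategy, not a proof that such a configuration exists at genus $1$. The paper settles this by writing down a size-$11$ genus-$1$ trade (\egref{eg:nogrp}) and killing all group embeddings in one line with the quadrangle criterion (a violated quadrangle in a sub-PLS survives in any containing Cayley table). Moreover, for $g\ge2$ the paper needs no surgery at all: it writes down three-row trades directly, a perturbation of the first three rows of $\Z_{2g+1}$ with mate obtained by cycling the rows, and reads off the genus from \eref{e:genus}. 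Your connected-sum machinery (whose bitrade, separation and genus-additivity properties you also only assert, though they are checkable) is therefore avoidable; used as you propose, it still needs $W_0$.

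The embedding half contains a step that fails as stated. At a junction the identified triangle is black in one of the two copies. Give the elements of the neighbouring copy constant values in coordinate $i$ (with symbol constant $=$ row constant $+$ column constant): the merged row and column, which lie in white triples of that neighbouring copy with otherwise unmerged vertices, force these constants to equal the copy-$i$ images $\rho_i(r)$, $\gamma_i(c)$ of the junction vertices, and the merged symbol then forces $\sigma_i(s)=\rho_i(r)+\gamma_i(c)$ --- i.e.\ a \emph{black} triple of copy $i$ would have to satisfy the Cayley-table relation, which is precisely what disjointness of the mate rules out in general (and reversing which colour is deleted only moves the problem to the other coordinate). For the same reason there is no free ``copy-dependent shift in the remaining coordinates'': those values are forced by the merged vertices, so the injectivity argument has nothing to act on; rescuing the scheme with non-constant values would require a homomorphism of the neighbouring copy's group $\A$ sending a prescribed torsion element to a prescribed nonzero value of $\Z_{n_i}$, which you have not supplied. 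Finally, CRT needs genus-$1$ blocks embeddable in cyclic groups of pairwise coprime orders; the obvious block (the Cayley table of $\Z_3$) embeds in $\Z_n$ only when $3\mid n$, and you give no supply of alternatives. Contrast \lref{l:doesembed}: the first three rows of $\Z_{2g+1}$, with mate obtained by shifting the top row to the bottom, form a separated connected bitrade of genus exactly $g$, already embedded in $\Z_{2g+1}$.
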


By contrast, the trades of genus zero can all be embedded in groups.
A major result of this paper is:

\begin{theorem}\label{t:main}
Every spherical latin trade can be embedded in a finite abelian group.
\end{theorem}

Moreover, there is an efficient algorithm for constructing what
is in some sense the canonical (though not necessarily the minimal)
such embedding. As indicated in the abstract, we also establish various 
properties of spherical trades and the groups that trades can be
embedded in.

\tref{t:bothhappen} and \tref{t:main} solve Open problem 8 from
\cite{pragueQs} (Proposed by Ale\v s Dr\'apal and Nick Cavenagh),
which asked: ``Can every separated latin trade be embedded into the
operation table of an abelian group? If this is not true in general is
it true for spherical latin trades?''

\section{Definitions}\label{s:def} 
  
The following definitions and a more comprehensive survey on 
latin bitrades may be found in \cite{Ca07}. 
 
A {\em latin square} $L$ of order $n$ is an $n\times n$ array, with
the cells of the array occupied by elements of a set $S$ of $n$ {\em
symbols}, such that each symbol occurs precisely once in each row and
once in each column.  If we also index the rows and columns of $L$ by
the sets $R$ and $C$, respectively, then $L$ may be thought of as a
subset of $R\times C\times S$.  Specifically, $(r,c,s)\in L$ if and
only if symbol $s$ occurs in row $r$ and column $c$ of the latin
square.

It is clear that a Cayley table for a group is a latin
square. However, there are latin squares which do not correspond to
group tables.  In general, a latin square is equivalent to an
operation table for a {\em quasigroup}. A quasigroup is a set of order
$n$ with binary operation $\star$ such that for each $a$ and $b\in Q$,
there exist unique elements $x$ and $y\in Q$ such that $a\star x=b$
and $y\star a=b$.

A {\em partial latin square} (PLS) of order $n$ is an $n\times n$
array, possibly with empty cells, such that each symbol from $S$
occurs {\em at most} once in each row and at most once in each column.
Thus any subset of a latin square $L$ is a PLS. The converse, however,
is not true, as some PLS have no completion to latin squares of the
same order.

Two PLS are said to be {\em isotopic} if one may be
obtained from the other by relabelling within the sets $R$, $C$ and $S$.
Two PLS are said to be {\em conjugate} to each other if one may be
obtained from the other by considering them as subsets of $R\times
C\times S$ and permuting the coordinates in their triples.
Two PLS are said to be in the same {\em main class} or {\em species}
if one may be obtained from the other by a combination of isotopy
and conjugation.

We next define latin bitrades (several equivalent definitions may
be found in~\cite{Ca07}).
 
\begin{definition}\label{d:bitrade2}
A latin bitrade is a pair $(W,B)$ of non-empty PLS
such that for each $(r,c,s)\in W$ (respectively, $B$), there
exists unique $r'\neq r$, $c'\neq c$ and $s'\neq s$ such that:
\begin{itemize}
\item $(r',c,s)\in B$ (respectively, $W$),
\item $(r,c',s)\in B$, (respectively, $W$), and
\item $(r,c,s')\in B$, (respectively, $W$).
\end{itemize}
\label{def2}
\end{definition}
The {\em size} of $(W,B)$ is equal to $|W|$; i.e. the number of filled
cells in $W$ (or, equivalently, in $B$). We stress that, in
this paper, all PLS (and in particular all trades)
are by definition finite.

If $(W,B)$ is a latin bitrade, we may refer to $W$ as a {\em latin
trade} and $B$ as its {\em disjoint mate}.  Equivalently, a {\em
latin trade} is a partial latin square $W$ for which there exists a
disjoint mate $B$ such that $(W,B)$ is a latin bitrade. It is possible
that a latin trade may have more than one choice of disjoint mate,
although we will see in \lref{l:uniqmate} an important case where
there is a unique mate satisfying certain extra conditions.

It is immediate from \dref{def2} that any isotopy or conjugate of
a latin bitrade is also a latin bitrade and that $(W,B)$ is a latin
bitrade if and only if $(B,W)$ is a latin bitrade.  If $(W,B)$ is a
latin bitrade, it is not hard to see that each non-empty row and each
non-empty column must contain at least two symbols, and that each
symbol occurs at least twice (if at all) within $W$ and $B$.
The smallest possible size of a latin bitrade is $4$; such latin
bitrades correspond to latin subsquares of order $2$ and are called
{\em intercalates}.  A latin bitrade $(W,B)$ is said to be {\em
  connected} if there exists no latin bitrades $(W',B')$ and $(W'',B'')$
such that $W'\cap W''=\emptyset$, $W=W'\cup W''$ and $B=B'\cup B''$.

\subsection{The genus of separated, connected latin bitrades}\label{ss:genus}

In this section we exclude from $R\cup C\cup S$ any rows, columns or
symbols that are not used within any triple of a particular latin
bitrade and we also specify that $R$, $C$ and $S$ are pairwise
disjoint.

Each row $r$ of a latin bitrade $(W,B)$ defines a
permutation $\psi_r$ of the symbols in row $r$, where $\psi_r(s)=s'$
if and only if $(r,c,s)\in W$ and $(r,c,s')\in B$ for some $c$. 
If $\psi_r$ is a single cycle, then we say that
the row $r$ is {\em separated}.  Similarly, we may classify each
column and symbol as being either separated or non-separated.  If
every row, column and symbol is separated, then we say that the
latin bitrade $(W,B)$ is {\em separated}.

Any non-separated latin bitrade may be transformed into a separated
latin bitrade by a process of identifying each cycle in the
permutation with a new row, column or symbol, as shown in the
following example.
\begin{example}
In the diagram below, subscripts denote symbols from disjoint mates. 
The first row $r$ of the latin bitrade $(W,B)$ is non-separated since
$\psi_r=(14)(35)$. The separated latin bitrade $(W',B')$ 
is formed by splitting $r$ into two rows. 
$$
\begin{array}{|ccccc|}
\hline
\cdot& 1_4 & 3_5 & 4_1 & 5_3 \\
1_2 & 3_1 & 2_3 & \cdot & \cdot \\
2_1 & 4_3 & 5_2 & 1_4 & 3_5 \\
\hline
\multicolumn{5}{c}{(W,B)\vline width 0pt height 14pt}
\end{array} 
\quad \quad  
\begin{array}{|ccccc|}
\hline
\cdot & 1_4 & \cdot  & 4_1 & \cdot \\
\cdot & \cdot & 3_5 & \cdot & 5_3 \\
1_2 & 3_1 & 2_3 & \cdot & \cdot \\
2_1 & 4_3 & 5_2 & 1_4 & 3_5 \\
\hline
\multicolumn{5}{c}{(W',B')\vline width 0pt height 14pt}
\end{array} 
$$
\label{sepa}
\end{example}
Note that the process of separation does not change
the size of the latin bitrade, or whether or not it is connected.

Given a separated, connected latin bitrade $(W,B)$, we may construct
a {\em triangulation} $\G$ whose vertex set is $R\cup C\cup S$ and whose
edges are pairs of vertices that occur within some triple of $W$ (or,
equivalently, some triple of $B$).  If we define white and black
faces of $\G$ to be the triples from $W$ and $B$ (respectively), then
$\G$ is a face $2$-colourable triangulation of some surface (\cite{group2}).

We next orient the edges of $\G$ so that each white face contains
directed edges from a row to a column vertex, a column to a symbol
vertex and a symbol to a row vertex.  It is immediate that each face
(black or white) is labelled {\em coherently}; i.e.  in the subgraph
induced by the vertices of a triangular face, both the out-degree and
the in-degree of each vertex are $1$.  Thus the surface in which $\G$
is embedded is {\em orientable}, and Euler's genus formula must give a
non-negative, integer value for the genus $g$:
\begin{align}
g & = \tfrac12(2-f+e-v) \nonumber \\
 & =  \tfrac12(2-|W|-|B|+3|W|-|R|-|C|-|S|)\nonumber \\
& =   \tfrac12(2+|W|-|R|-|C|-|S|),
\label{e:genus}
\end{align} 
where $f$, $e$ and $v$ are the number of faces, edges and vertices of
$\G$, respectively. We note that the graph Dr\'apal uses in \cite{group2}
to calculate the genus is based on the dual of $\G$.

We stress that it only makes sense to calculate the genus of a bitrade
if it is separated and connected. Hence, for the remainder of the
paper, whenever we ascribe a genus to a bitrade we mean this to imply
that the bitrade is separated and connected.

\begin{example}
\fref{f:interc} shows an intercalate latin bitrade $(W,B)$ and
its corresponding triangulation $\G$ embedded in the plane.  The
triangles corresponding to $B$ are shaded. The triangle
$(r_0,c_0,s_0)$ is the external face.
\end{example}

\begin{figure}[h]
\centerline{\includegraphics[height=150pt]{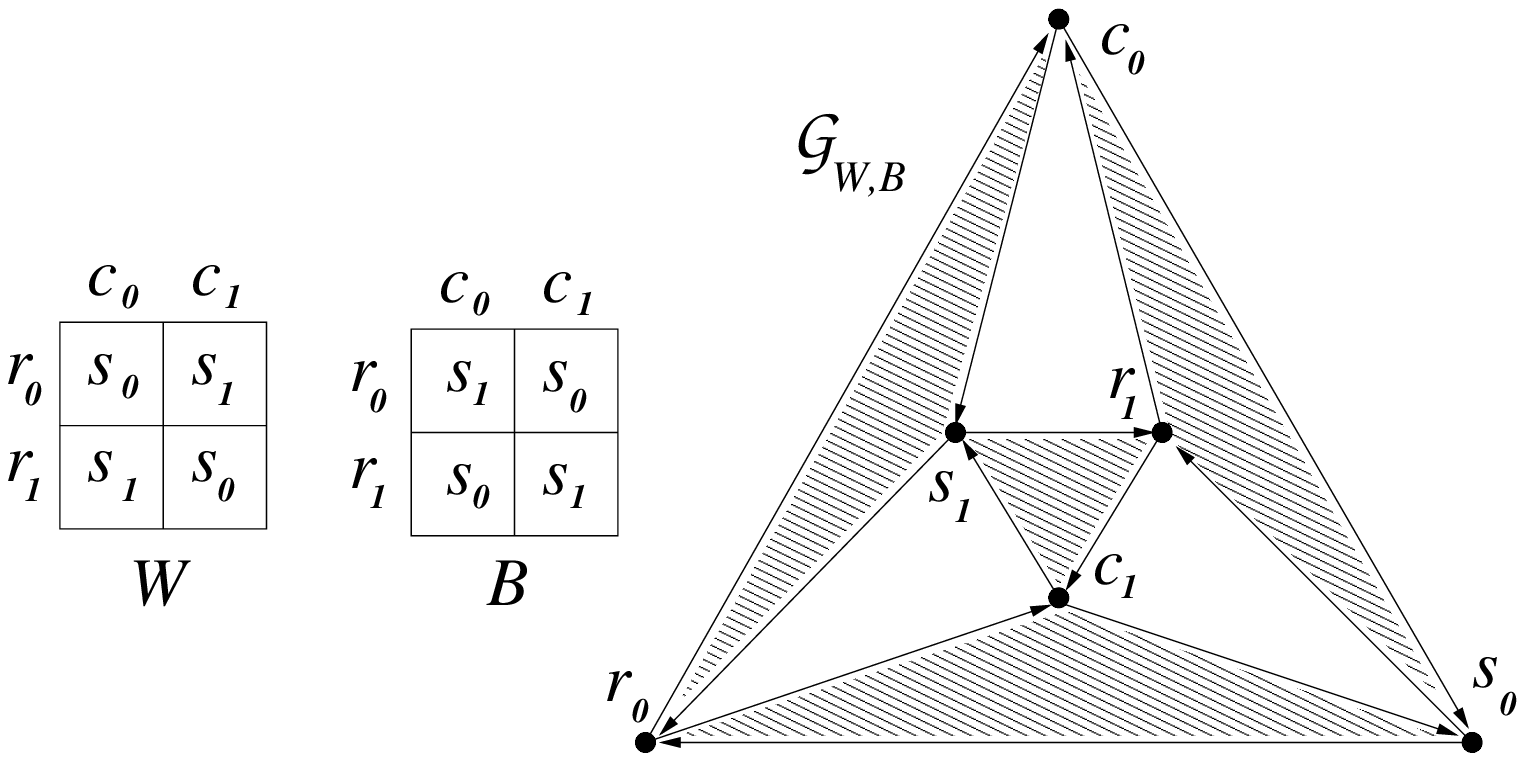}}
\caption{\label{f:interc}Intercalates $(W,B)$ with 
the corresponding triangulation $\G$.} 
\end{figure}

A latin bitrade of genus $0$ is called a {\em planar} or {\em
  spherical} latin bitrade.  We say that a latin {\em trade} $W$ is
{\em planar} or {\em spherical} if there exists some disjoint mate $B$
such that the latin bitrade $(W,B)$ is planar.  We will see later, in
\lref{l:uniqmate}, that such a choice of $B$ is unique, although there
may be a trade $B'\ne B$ such that $(W,B')$ is a latin bitrade but
is not separated and not connected.

Cavenagh and Lison\v ek demonstrated an
interesting equivalence involving planar latin bitrades.  A planar
Eulerian triangulation is an Eulerian graph embedded in
the plane with each face a triangle. In the
following theorem the latin bitrades are unordered; meaning $(W,B)$ is
considered to be equivalent to $(B,W)$.
\begin{theorem}{\rm (\cite{Cali})}
Planar embeddings of {\em unordered} latin bitrades are equivalent to 
planar Eulerian triangulations.
\label{pla}
\end{theorem}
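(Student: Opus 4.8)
The plan is to produce explicit constructions in both directions and verify that they are mutually inverse modulo the natural equivalences (relabelling of $R\cup C\cup S$, conjugacy of the bitrade, the interchange $W\leftrightarrow B$, and orientation-preserving homeomorphism of the plane). One direction needs almost nothing new: if $(W,B)$ is a planar latin bitrade — so, by our standing convention, separated and connected — then the triangulation $\G$ built above is already a plane triangulation, and it is Eulerian because the degree of a row vertex $r$ equals twice the number of filled cells of row $r$ (these cells occupy that many distinct columns and that many distinct symbols), and likewise for column and symbol vertices. So $(W,B)\mapsto\G$ lands in planar Eulerian triangulations.

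For the converse, let $T$ be a planar Eulerian triangulation (assume enough vertices to exclude the degenerate $K_3$). First colour the faces of $T$ white and black; this is possible exactly because $T$ is Eulerian, and for $T$ connected it is unique up to swapping the colours, which is harmless as the bitrade is unordered. Next orient every edge so that each white facial triangle becomes a directed $3$-cycle running counter-clockwise around its region; equivalently each black triangle becomes a directed $3$-cycle running clockwise. This is consistent: an edge of a triangulation borders exactly one white and one black face, and the counter-clockwise boundary of the former and the clockwise boundary of the latter prescribe the same direction on the shared edge. Thus $T$ acquires a coherent orientation, every face a directed triangle.

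I would then read the row/column/symbol labelling off this orientation. Fix a base vertex $v_0$ and let $\chi(v)\in\Z_3$ be the signed length of any walk from $v_0$ to $v$ (forward edges $+1$, backward $-1$). This is well defined modulo $3$: a simple closed walk bounds a disc tiled by facial triangles, and summing the counter-clockwise boundaries of the tiles, the interior edges cancel, a white tile contributes $+3$ and a black tile $-3$, so the signed length of the closed walk is a multiple of $3$; arbitrary closed walks are integer combinations of simple ones. Hence $\chi$ is a proper $3$-colouring, and by construction every white triangle reads $0\to1\to2\to0$ around its oriented boundary (and so does every black triangle). Call the classes $\chi^{-1}(0),\chi^{-1}(1),\chi^{-1}(2)$ the rows $R$, columns $C$ and symbols $S$; each white triangle $r\to c\to s\to r$ yields a triple $(r,c,s)$ and each black triangle yields a triple, giving sets $W$ and $B$. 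Because $T$ is a simple triangulation, any two vertices of a common face span an edge, which lies on one white and one black face; this immediately makes $W$ and $B$ partial latin squares and verifies the axioms of \dref{d:bitrade2}, the forced partner $r'$ (resp.\ $c'$, $s'$) being the third vertex of the black face on the relevant edge. The triangulation attached to $(W,B)$ is $T$, so $(W,B)$ is planar; it is separated because the link of each vertex of $T$ is a single cycle, which forces each row, column and symbol permutation to be a single cycle; and this separatedness together with connectedness of $T$ yields connectedness of $(W,B)$.

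It remains to see that the two constructions invert one another: feeding $\G$ back through the converse recovers $(W,B)$ up to the choice of base vertex and of which colour class is ``rows'' — that is, up to isotopy and conjugacy — while building the triangulation of the bitrade obtained from $T$ returns $T$ up to homeomorphism. I expect the genuinely fussy step to be identifying the correct equivalence relation on each side so that these maps descend to a clean bijection (this is where ``unordered'' does its work); the one substantive computation is the $\pmod 3$ well-definedness of $\chi$ — equivalently, that a coherent orientation of a \emph{plane} triangulation induces a proper $3$-colouring — and the remaining topology (orientability, Euler's formula) is already in hand via \eref{e:genus}.
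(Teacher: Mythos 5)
There is nothing in the paper to compare you against here: Theorem~$\ref{pla}$ is quoted from Cavenagh and Lison\v ek \cite{Cali} and no proof is given in this paper, so your proposal has to stand on its own. On its own terms it is essentially correct, and it is in essence the argument of \cite{Cali}: the forward direction is just the parity observation that in $\G$ the degree of a row (column, symbol) vertex is twice the number of entries in that row (column, symbol); the converse rests on the facts that a plane triangulation is face $2$-colourable iff it is Eulerian and vertex $3$-colourable iff it is Eulerian (Heawood), the three colour classes supplying $R$, $C$, $S$. Your derivation of the $3$-colouring via the coherent orientation and the mod-$3$ potential $\chi$ (white faces contribute $+3$, black faces $-3$, interior edges cancel) is a clean self-contained way to get the second fact, and your verifications of the PLS and bitrade axioms, of separation (vertex links are single cycles, so each $\psi_r$, $\psi_c$, $\psi_s$ is a single cycle) and of connectedness are sound, provided $T$ is simple and $K_3$ is excluded — which you do, and which is genuinely needed so that the white and the black face on a common edge have distinct third vertices ($r'\ne r$, etc.). The only incomplete piece is the one you flag yourself: you assert, rather than carry out, the identification of the equivalence relations on the two sides (species of unordered, separated, connected bitrades versus isomorphism of the embedded triangulations) and the check that the two constructions are mutually inverse up to them; that is where the word ``equivalent'' gets its precise meaning, and it is the part \cite{Cali} spells out. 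Given your constructions this bookkeeping is routine (the base-vertex and colour-class choices amount to isotopy and conjugacy, the colour swap to unordering), so I would call this an acknowledged loose end rather than a gap in the mathematics.
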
 

\section{Embeddings}
\label{s:embed}

A partial latin square $P$ is {\em embedded} in a
quasigroup $Q$ with operation $\star$ if $R,C,S\subseteq Q$ and
$r\star c=s$ for each $(r,c,s)\in P$.  We say that a PLS $P$ {\em
embeds} in, or {\em is embeddable} in, a quasigroup $Q$ if it is
isotopic to a PLS which is embedded in $Q$.  If $Q$ is a group, this
property is species invariant, as shown in the following lemma.

\begin{lemma}\label{l:specinvar}
Let $G$ be an arbitrary group and $P,P'$ two PLS from the 
same species. Then $P$ embeds in $G$ iff $P'$ embeds in $G$.
\end{lemma}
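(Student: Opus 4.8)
The plan is to reduce the claim to two separate sub-claims: (i) embeddability in a group $G$ is preserved under isotopy, and (ii) it is preserved under conjugacy. Since the species of $P$ is generated by isotopies and conjugations, establishing both sub-claims suffices, and since each operation is invertible it is enough to prove each implication in one direction.

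Sub-claim (i) is essentially immediate from the definitions. If $P'$ is isotopic to $P$, then $P'$ is obtained from $P$ by relabelling within $R$, $C$ and $S$; but ``embeds'' already allows us to precompose with an isotopy, so a chain of two isotopies is again an isotopy and the conclusion follows directly from the definition of embeddability given at the start of \sref{s:embed}. The substance of the lemma is therefore in sub-claim (ii). Here I would argue it suffices to handle the transpositions of coordinates that generate the symmetric group $S_3$ acting on the triple positions of $R\times C\times S$, say the swap $(r,c,s)\mapsto(c,r,s)$ and the swap $(r,c,s)\mapsto(r,s,c)$; every conjugate operation is a composition of these, and composing with further isotopies is harmless by (i).

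For the first of these, suppose $P$ is embedded in $G$, so $R,C,S\subseteq G$ and $rc=s$ for every $(r,c,s)\in P$ (writing the group multiplicatively). For the conjugate $P^{T}=\{(c,r,s):(r,c,s)\in P\}$ we want an embedding into \emph{some} group, and we are free to choose the group: take the opposite group $G^{\mathrm{op}}$, which is isomorphic to $G$ via $g\mapsto g^{-1}$. In $G^{\mathrm{op}}$ the product of $c$ and $r$ is exactly the $G$-product $rc=s$, so $P^{T}$ embeds in $G^{\mathrm{op}}\cong G$, and then by \lref{l:specinvar} applied through an explicit isomorphism — or just directly, transporting the embedding along $g\mapsto g^{-1}$ — we get an embedding into $G$ itself. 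For the second swap, $(r,c,s)\mapsto(r,s,c)$, the relation $rc=s$ rearranges to $c = r^{-1}s$; so setting $R' = \{r^{-1}:r\in R\}$, $C'=S$, $S'=C$ we have $r' c' = s'$ for the corresponding triples, giving an embedding of the conjugate into $G$ after the harmless relabelling $r\mapsto r^{-1}$ on rows (an isotopy).

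The only point requiring any care — and the step I would flag as the main obstacle to a fully clean write-up — is bookkeeping: one must be sure that when conjugation merges two of $R$, $C$, $S$ into a single coordinate (or otherwise permutes them), the disjointness conventions and the requirement $R,C,S\subseteq Q$ are still met after the relabellings, and that the composition of all the auxiliary isotopies and the one opposite-group isomorphism indeed lands us back in $G$ rather than merely in a group isomorphic to $G$. Because every building block (isotopy, the map $g\mapsto g^{-1}$, the passage to $G^{\mathrm{op}}$) is reversible and $G\cong G^{\mathrm{op}}$ canonically, tracking the composite map shows the final embedding is into $G$, completing the proof.
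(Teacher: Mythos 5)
Your proof is correct, and its skeleton matches the paper's: reduce by isotopy-invariance to the case of a single conjugation, then show conjugates of an embedded PLS still embed. The difference is in how that second step is discharged. The paper simply cites Theorem 4.2.2 of D\'enes--Keedwell, which says the six conjugates of a group's Cayley table are isotopic to one another, and then takes conjugates of the given embedding; you instead prove the two generating transpositions by hand, via the inversion map (equivalently the isomorphism $G\cong G^{\mathrm{op}}$ through $g\mapsto g^{-1}$) for $(r,c,s)\mapsto(c,r,s)$, and the rearrangement $rc=s\Leftrightarrow r^{-1}s=c$ for $(r,c,s)\mapsto(r,s,c)$. Your explicit formulas are exactly the standard proof of the cited theorem, so in effect you have inlined the black box: the paper's route is shorter and leans on a reference, while yours is self-contained and exhibits the isotopies concretely (which is handy if one actually wants to write down the embedding of a conjugate), at the cost of the composition bookkeeping you flag -- which is indeed routine, since conjugating an isotopic copy yields an isotopic copy of the conjugate with the isotopy's components permuted. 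One small caution: in the transpose case, do not phrase the transport step as an appeal to the lemma itself (that would be circular); your ``or just directly, transporting along $g\mapsto g^{-1}$ in all three coordinates'' is the right way to say it, since $c^{-1}r^{-1}=(rc)^{-1}=s^{-1}$ gives the required isotopy into $G$.
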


\begin{proof}
By definition, embedding is isotopy invariant, so it suffices to
consider the case where $P$ and $P'$ are conjugate.  This case
follows directly from Theorem 4.2.2 of \cite{DKI}, which states that
the six conjugates of the Cayley table of a group are isotopic to each
other. So if $P$ embeds in $G$ we take conjugates of the resulting
embedding to yield embeddings for the different conjugates of $P$.
\end{proof}

\lref{l:specinvar} is not true, in general, for embeddings in Latin
squares other than group tables. In the following example, $P$ embeds
in $L$ as shown by the entries in {\bf bold}, 
but the transpose of $P$ cannot be embedded in $L$.
\[
P=\begin{array}{|ccc|}
\hline
0&1&2\\
1&2&0\\
\hline
\end{array}
\qquad
L=\begin{array}{|cccccc|}
\hline
0&1&2&3&4&5\\[0.5ex]
\bf1&0&3&\bf2&\bf5&4\\[0.5ex]
\bf2&4&0&\bf5&\bf1&3\\[0.5ex]
3&5&1&4&2&0\\[0.5ex]
4&3&5&1&0&2\\[0.5ex]
5&2&4&0&3&1\\
\hline
\end{array}
\]

Our second lemma shows that embeddability in a group
can be tested by considering a restricted class of isotopies.

\begin{lemma}\label{l:000}
Let $P$ be a PLS and suppose that $(r,c,s)\in P$.
Let $G$ be a group with identity element $\id$.  If $P$ embeds in $G$
then $P$ is isotopic to a PLS in $G$ by an isotopy that maps each of
$r,c$ and $s$ to $\id$. 
\end{lemma}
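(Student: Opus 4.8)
The plan is to start from an embedding $\phi$ of $P$ in $G$, which by definition means there is an isotopy carrying $P$ to a PLS $P_0$ embedded in $G$; so we have injections $\alpha\colon R\to G$, $\beta\colon C\to G$, $\gamma\colon S\to G$ with $\alpha(x)\beta(y)=\gamma(z)$ in $G$ for every $(x,y,z)\in P$. In particular $\alpha(r)\beta(c)=\gamma(s)$. The idea is to post-compose each of these three maps with a suitable left or right translation of $G$ so that $r$, $c$ and $s$ all land on $\id$, while preserving the embedding property.

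First I would replace $\alpha$ by $\alpha'(x)=\alpha(r)^{-1}\alpha(x)$, which is still injective and satisfies $\alpha'(r)=\id$. This is a relabelling within $R$ (left translation is a bijection of $G$), so it is a legitimate isotopy. Next replace $\beta$ by $\beta'(y)=\beta(y)\beta(c)^{-1}$, giving $\beta'(c)=\id$; again this is an isotopy. Now for $(x,y,z)\in P$ we have $\alpha'(x)\beta'(y)=\alpha(r)^{-1}\alpha(x)\beta(y)\beta(c)^{-1}=\alpha(r)^{-1}\gamma(z)\beta(c)^{-1}$. So define $\gamma'(z)=\alpha(r)^{-1}\gamma(z)\beta(c)^{-1}$; this is a composition of a left and a right translation, hence injective, and the triple product identity $\alpha'(x)\beta'(y)=\gamma'(z)$ holds for all of $P$. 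Finally, evaluating at $(r,c,s)$: $\gamma'(s)=\alpha(r)^{-1}\gamma(s)\beta(c)^{-1}=\alpha(r)^{-1}\alpha(r)\beta(c)\beta(c)^{-1}=\id$. Thus $(\alpha',\beta',\gamma')$ is an isotopy realizing an embedding of $P$ in $G$ and sending each of $r,c,s$ to $\id$.

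There is essentially no obstacle here; the only point that needs a word of care is that an ``isotopy'' is allowed to relabel within $R$, $C$, $S$ by arbitrary bijections onto subsets of $G$, and left/right translations of a group are bijections of $G$, so composing an embedding with them stays within the class of isotopies — and crucially the resulting maps remain injective, which is what is needed for the target to be a genuine PLS inside the Cayley table. The slight asymmetry (left translation on rows, right translation on columns, two-sided on symbols) is forced by the multiplicative structure and is exactly what makes all three designated coordinates collapse to $\id$ simultaneously.
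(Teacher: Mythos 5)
Your proof is correct and is essentially the paper's own argument: the paper also assumes $P$ embedded (identifying $r,c,s$ with group elements) and applies the isotopy $I_1(x)=r^{-1}x$, $I_2(x)=xc^{-1}$, $I_3(x)=r^{-1}xc^{-1}$, which is exactly your left/right/two-sided translation scheme written with the embedding maps suppressed. No substantive difference.
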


\begin{proof}
Suppose $P$ is embedded in $G$. Define the permutations 
$I_1,I_2,I_3:G\mapsto G$
by $I_1(x)=r^{-1}x$,
$I_2(x)=xc^{-1}$ and
$I_3(x)=r^{-1}xc^{-1}$.
The isotopy $(I_1,I_2,I_3)$ maps $(r,c,s)$
to $(r^{-1}r,cc^{-1},r^{-1}rcc^{-1})=(\id,\id,\id)$.
\end{proof}

In this paper we are not concerned with counting the number of ways
that a trade can be embedded in a group. However, we make the
following observation in passing.  Given a trade $W$ and group $G$ it
is possible for $W$ to have ``essentially different'' embeddings $E_1$
and $E_2$ in $G$ in the sense that there is no isotopism that
preserves $G$ and maps $E_1$ to $E_2$. For example, the order two
subgroups of $\Z_2\oplus\Z_4$ generated by, respectively, $(0,2)$ and
$(1,0)$ are ``essentially different'' embeddings of intercalates.  
The former is contained in an embedding of $\Z_4$, while the
latter is not.

We remind the reader that not every latin square is a Cayley table for
a group. A test for whether a latin square is isotopic to a group 
is the {\em quadrangle criterion}.  A latin square $L$ satisfies
the quadrangle criterion if and only if for each
$r_1,r_2,r_1',r_2',c_1,c_2,c_1'c_2',s_1,s_2,s_3$ such that
$$(r_1,c_1,s_1),(r_1,c_2,s_2),(r_2,c_1,s_3),
(r_1',c_1',s_1),(r_1',c_2',s_2),(r_2',c_1',s_3)\in L,$$ the cells
$(r_2,c_2)$ and $(r_2',c_2')$ contain the same symbol.  A latin square
is isotopic to the Cayley table of a group if and only if it satisfies
the quadrangle criterion (see \cite{DKI} for a proof).

Observe that if a partial latin square $P$ contains filled cells that
fail the quadrangle criterion and if $P$ is a subset of a latin square
$L$, then $L$ also fails the quadrangle criterion.  Thus if a latin
trade fails the quadrangle criterion, it does not embed into any group
$G$.  However, the converse is not true, as shown by the following
example.
\begin{example}\label{eg:noquadr}
The following latin bitrade $(W,B)$ is separated and connected. It
does not fail the quadrangle criterion, as can easily be seen by
noting that no pair of columns have more than one symbol in common.
\[W=
\begin{array}{|ccccccccc|}
\hline
1&2&3&4&5&6&7&8&9\\
2&3&4&5&6&7&8&9&1\\
4&5&9&8&1&3&2&6&7\\
\hline
\end{array}
\qquad B=
\begin{array}{|ccccccccc|}
\hline
2&3&4&5&6&7&8&9&1\\
4&5&9&8&1&3&2&6&7\\
1&2&3&4&5&6&7&8&9\\
\hline
\end{array}
\]
However $W$ cannot be embedded into any group. To see this, take 
pairs of rows and interpret them as
permutations in two row format. Writing these in cycle format,
the first two rows give us $\pi_{12}=(123456789)$ whilst the
first and third rows give us $\pi_{13}=(148639725)$. If $W$ could
be embedded in a group then, applying a condition due to 
Suschkewitsch {\rm\cite{Sus}} related to the proof of Cayley's theorem,
we would find that $\pi_{12}$ and $\pi_{13}$ have to generate a 
group of order at most $9$. As this clearly is not the case, $W$
cannot be embedded in a group. 

Another way to produce a latin rectangle that cannot be embedded in
any group is to construct one which has rows $i$ and $j$ for which
$\pi_{ij}$ is not a regular permutation (a regular permutation is one
in which all cycles have the same length).
\end{example}

As the above example demonstrates, the quadrangle criterion cannot be
used to prove that a latin trade embeds in some group.  However, it is
useful for showing that a trade does not embed in any group:

\begin{example}\label{eg:nogrp}
The smallest connected, separated trade which does not embed in any group is
shown below, with its disjoint mate (which also does not embed in a
group).
\[
\begin{array}{|cccc|}
\hline\nspacer
0&\boxed1&\boxed2&3\\[0.6ex]
\looped1&\boxed{\looped2}&\boxed3&0\\[0.6ex]
\looped2&\looped0&1&\cdot\sspacer\\
\hline
\end{array}
\qquad
\begin{array}{|cccc|}
\hline\nspacer
1&\boxed2&\boxed3&0\\[0.6ex]
\looped2&\boxed{\looped0}&\boxed1&3\\[0.6ex]
\looped0&\looped1&2&\cdot\sspacer\\
\hline
\end{array}
\]
The fact that both trades fail the quadrangle criterion, and thus do
not embed in a group, is demonstrated by the two quadrangles marked
with $\boxed{\phantom{0}}$ and $\,\looped{\phantom{00}}$. The fact
that no smaller trade fails to embed in a group is guaranteed by
\tref{t:main}, since by {\rm\cite{tradenum}} the only smaller trades of
positive genus are isotopic to $\Z_3$ and hence (trivially) embeddable
in a group.
\end{example}

On the other hand, there are many classes of latin trades which do
embed in groups.  An early result by Dr\'apal \cite{Dr1} gave a class
of geometrically constructed latin trades which embed in $\Z_n$:
\begin{theorem} 
{\rm (Dr\'apal, $\cite{Dr1}$)} Let $m$ and $n$ be positive integers.
Suppose that we can partition an equilateral triangle of
side $n$ into $m$ smaller (integer-sided) equilateral triangles, such
that each vertex of a triangle occurs as the vertex of at most $3$ of
the smaller triangles. Then there exists a corresponding planar latin
bitrade $(W,B)$ of size $m$ such that $W$ embeds into $\Z_n$.
\label{tria}
\end{theorem}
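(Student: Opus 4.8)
The plan is to read the dissection as a planar drawing of a latin bitrade all of whose white cells satisfy one linear relation modulo $n$. Realise the equilateral triangle $\Delta$ of side $n$ inside the triangular lattice as $\{(x,y,z)\in\Z_{\ge0}^{3}:x+y+z=n\}$, so that each small triangle $T$ of the dissection is up- or down-pointing and is pinned down by a lattice point together with its side length $\ell_T$. Attach to $T$ a triple $t(T)\in\Z_n^{3}$ obtained by reducing a suitably chosen corner of $T$ modulo $n$ --- one rule for up-pointing triangles, the ``opposite'' rule for down-pointing ones. Every corner of every small triangle is a lattice point of $\Delta$ and so has integer coordinate sum exactly $n$; hence $t(T)$ always has coordinate sum $\equiv0\pmod n$, and setting $W=\{t(T):T\text{ a small triangle}\}$ makes each cell of $W$ lie in the single coset $\{(r,c,s):r+c+s\equiv0\pmod n\}$ of the operation table of $\Z_n$, so $W$ is automatically a partial latin square. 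The only coincidences introduced by reduction modulo $n$ are among the three corners of $\Delta$, and for a correct choice of corner rule the corner triangles at those three points receive distinct labels; so $|W|=m$.

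The substantive step is to produce the disjoint mate and verify \dref{d:bitrade2}. Define $B$ from the same picture by folding up the boundary $\partial\Delta$ according to the congruence modulo $n$ and recording, for each small triangle, the three triangles meeting it along its three sides in the folded configuration. One must then check that each white cell $(r,c,s)$ has a \emph{unique} black cell in its row, a unique one in its column, and a unique one carrying its symbol, and symmetrically with the roles of $W$ and $B$ exchanged. This is exactly where the hypothesis ``every vertex is a corner of at most three small triangles'' is used: it forces the cyclic fan of triangles around each lattice point --- equivalently, the permutation $\psi_r$ (or its column/symbol analogue) attached to the corresponding row, column or symbol --- to be a single cycle of the length that makes the three required buddies both exist and be unique. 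In particular this shows at one stroke that $(W,B)$ is separated. If some vertex were allowed to meet four or more small triangles, the naive construction would already output something that is not a bitrade, so the bound of three is needed to make the construction well defined, not merely to check its output.

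It remains to read off the genus and the embedding. The dissection of $\Delta$ together with the identifications forced on $\partial\Delta$ is a drawing of the triangulation $\G$ on the sphere, so Euler's formula \eref{e:genus} yields $g=0$: $(W,B)$ is a spherical latin bitrade of size $m$. And since every cell $(r,c,s)$ of $W$ satisfies $r+c+s\equiv0\pmod n$, the array $W$ is a sub-array of the operation table of the quasigroup on $\Z_n$ given by $r\star c=-r-c$, which is a conjugate of the cyclic group $\Z_n$ and hence isotopic to it (as in the proof of \lref{l:specinvar}). Therefore $W$ embeds in $\Z_n$, as required.

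The main obstacle is the verification of the bitrade axioms in the second paragraph: converting the combinatorial condition ``at most three triangles per vertex'' into the existence-and-uniqueness clauses of \dref{d:bitrade2} and into separation, while keeping the orientation bookkeeping straight --- which corner of an up-pointing as against a down-pointing small triangle should be assigned to the row, to the column and to the symbol, both when defining $t(T)$ and when folding $\partial\Delta$ to obtain $B$. The coordinate set-up, the Euler-characteristic computation and the final passage to the addition table of $\Z_n$ are then routine.
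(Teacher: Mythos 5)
First, a caveat: the paper does not prove this statement; it quotes it from Dr\'apal \cite{Dr1}, so there is no internal proof to compare yours against, and I can only judge your argument on its own terms.

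Your outline points in a sensible direction (label the lattice data of the dissection by residues mod $n$, let one family of triples form $W$ and a companion family form $B$, and use the at-most-three-corners hypothesis to control the local picture at each vertex), but it has genuine gaps, concentrated exactly at the step you yourself flag as ``substantive''. The mate $B$ is never actually defined: ``folding up $\partial\Delta$ and recording, for each small triangle, the three triangles meeting it along its sides'' records adjacencies between triangles, not triples in $R\times C\times S$, so there is nothing yet against which the clauses of \dref{def2} can be checked. (The standard choice is that each small triangle contributes to one trade the triple of the three lattice lines bounding it, and to the other the triple of the three lines through a suitably distinguished corner; your ``reduce a corner mod $n$'' matches the second family, but the first is missing.) The verification of the bitrade axioms --- existence \emph{and} uniqueness of the row-, column- and symbol-mates for every triple, in both directions --- is the whole content of the theorem, and you replace it by the assertion that the hypothesis ``forces the cyclic fan around each lattice point to be a single cycle''. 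The actual mechanism is an angle count: at a point that is a corner of some small triangle, incident triangles contribute $60^\circ$ per corner and $180^\circ$ per side through the point, so the hypothesis forces exactly three corners plus one through-going side at an interior vertex, exactly three corners at an interior point of $\partial\Delta$, and one corner at a corner of $\Delta$; from this one reads off that each pair of the three lattice lines through such a vertex bounds exactly one small triangle cornered there, which is what yields the existence/uniqueness clauses, together with a separate case analysis of the identifications along $\partial\Delta$ caused by reduction mod $n$ (the degenerate line labelled $n$ at a corner of $\Delta$ being glued to the opposite side labelled $0$). None of this is carried out.

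Two further points are asserted rather than proved. Your claim that the only coincidences among the triples $t(T)$ come from the mod-$n$ reduction at the corners of $\Delta$ is not justified: the real danger is an up- and a down-pointing triangle sharing the same distinguished corner, which would give equal triples before any reduction; excluding this is again the angle count (such a configuration forces six corners at that vertex), not a choice of corner rule. And the genus claim conflates the dissection with the triangulation $\G$: the vertices of $\G$ are the lattice \emph{lines} (rows, columns, symbols), not the lattice points, and $\G$ has $2m$ faces, so the dissection is not literally ``a drawing of $\G$ on the sphere''; one must either exhibit a planar embedding of $\G$ or count the lines used and apply \eref{e:genus}, and connectedness and separation also need an argument before a genus may be ascribed at all. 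So the architecture is plausible and close to Dr\'apal's construction, but the key verifications are missing rather than merely compressed.
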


The fact that the latin trades in the previous theorem are planar led
to the exploration of the relationship between the genus of a latin
trade and whether it embeds into a group. This is part of the
motivation for us to prove that any spherical latin trade is
embeddable into an abelian group (\tref{t:main}).

We spend the rest of this section putting this theorem into context by
exploring what happens when the conditions are relaxed or modified.
We first show that there is not an analogous result for any genus $g>0$.  
We then demonstrate an example of each of the following: a latin trade
which embeds in an abelian group but not in any cyclic group, a latin
trade which embeds in a non-abelian group but not in any abelian
group, and a latin trade which does not embed in any abelian
group even though it separates to produce a spherical trade.
Finally, we consider the case of embeddings into abelian groups which
are infinite but finitely generated.  We show that this case, in
essence, reduces to the finite one.

To see that planarity is a necessary condition for our result, we 
present the following two lemmas, which together prove \tref{t:bothhappen}.

\begin{lemma}\label{l:doesembed}
For each integer $g>0$, there exists a separated, connected latin
bitrade $(W,B)$ of genus $g$ such that $W$ embeds in $\Z_{2g+1}$.
\end{lemma}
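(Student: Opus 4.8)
The plan is to exhibit, for each $g>0$, an explicit family of latin bitrades on a small number of rows, columns and symbols taken from $\Z_{2g+1}$, and to verify directly that the genus formula \eref{e:genus} yields $g$. A natural candidate is to take $R=C=S=\Z_{2g+1}$ (or a small subset thereof) and to let $W$ consist of those triples $(r,c,s)$ with $r+c\equiv s$ for $r$, $c$ ranging over a carefully chosen ``staircase'' region of the addition table — concretely, something like the band $W=\{(i,j,i+j): j-i\in\{0,1,\dots,k\}\}$ for an appropriate width $k=k(g)$, with all arithmetic modulo $2g+1$. The disjoint mate $B$ is then forced: since $W$ lies inside the Cayley table of $\Z_{2g+1}$, one obtains $B$ by shifting, e.g. $B=\{(i,j,i+j+1)\}$ on the same set of cells, and one checks Definition~\ref{d:bitrade2} coordinate by coordinate using the group structure (each of the three ``neighbour'' conditions becomes a single linear equation with a unique solution in $\Z_{2g+1}$).

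First I would pin down the exact shape of the region so that (i) $(W,B)$ is a genuine bitrade — every filled cell has its three mates filled, which amounts to the region being ``closed'' under the three moves up to the cyclic wrap-around — and (ii) every non-empty row, column and symbol is separated, i.e. the row/column/symbol permutation $\psi$ is a single cycle. For a cyclic band of constant width in $\Z_n$ with $n=2g+1$ odd, $\psi_r$ should be (conjugate to) a single $(k+1)$-cycle, and oddness of $n$ is what makes the cycle structure uniform; this is presumably why $2g+1$ (rather than an arbitrary modulus) appears in the statement. Connectedness follows because the band is ``one piece'' — any proposed splitting $W=W'\cup W''$ into sub-bitrades would have to split the cyclic band of cells, which is impossible once it genuinely wraps around.

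With the combinatorial object in hand, the genus computation is the routine part: one counts $|W|$, $|R|$, $|C|$, $|S|$ for the chosen region and substitutes into \eref{e:genus}, choosing the width $k$ (and possibly trimming a few cells near the diagonal) so that $\tfrac12(2+|W|-|R|-|C|-|S|)=g$ exactly. The embedding $W\hookrightarrow\Z_{2g+1}$ is then immediate and essentially tautological, since by construction $W$ already sits inside the Cayley table of $\Z_{2g+1}$ with $r\star c=s$; by \lref{l:specinvar} it does not matter which representative of the species we embed.

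The main obstacle I anticipate is not any single hard step but the bookkeeping needed to make all three conditions hold simultaneously: getting a region whose size and vertex counts give \emph{exactly} genus $g$ while keeping every row, column and symbol separated and keeping the whole thing connected. In particular the behaviour of the band near the main diagonal (and near the wrap-around) is delicate — a full-width band degenerates, while too narrow a band may force $\psi$ to decompose into several short cycles — so the real work is choosing $k$ as a function of $g$ and verifying the cycle-length claim for the boundary rows and columns. Once that verification is done, everything else (the bitrade axioms, connectedness, the genus arithmetic, and the embedding) follows mechanically from the additive structure of $\Z_{2g+1}$.
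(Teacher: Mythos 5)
Your plan has a genuine gap, and it sits exactly where you flagged the ``delicate'' part. The mate you propose, $B=\{(i,j,i+j+1)\}$ on the same cells as $W$, is only a disjoint mate when every non-empty row \emph{and} every non-empty column of the chosen region is completely filled: by \dref{d:bitrade2} each row (and column) of $B$ must contain exactly the same symbol set as the corresponding row (column) of $W$, and a $+1$ symbol shift preserves that set only when the set is all of $\Z_{2g+1}$. On a band $\{(i,j):j-i\in\{0,\dots,k\}\}$ this fails on the boundary diagonals: e.g.\ for a cell with $j-i=0$ the required triple $(i,j-1,i+j)\in B$ lies outside the region, so the ``single linear equation'' does have a unique solution in $\Z_{2g+1}$ but it points to an empty cell. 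Thus your candidate is a bitrade only in the degenerate full-square case. Moreover, even granting a valid mate, the arithmetic does not close: a constant-width band using all of $R=C=S=\Z_{2g+1}$ has size $(2g+1)(k+1)$ and hence genus $\tfrac12\bigl(2+(2g+1)(k-2)\bigr)$ by \eref{e:genus}, which equals $g$ only when $g=1$ (full square, $k=2$); for $g\ge2$ no width $k$ works, so the unspecified ``trimming near the diagonal'' is carrying the entire proof and is never done.

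The paper avoids both problems with a simpler region: take $W$ to be the first \emph{three full rows} of the addition table of $\Z_{2g+1}$, and form $B$ by cyclically shifting those three rows (top row to the bottom). Because every column of the region contains all three rows, the row shift preserves the symbol set of each row and each column, so $(W,B)$ is genuinely a bitrade; columns and symbols each occur exactly three times and are separated, and the row permutations are $s\mapsto s+1$ (twice) and $s\mapsto s-2$, each a single cycle precisely because $2g+1$ is odd --- this is where the modulus $2g+1$ is really used, not in making a diagonal band uniform. The genus count $\tfrac12\bigl(2+3(2g+1)-3-2(2g+1)\bigr)=g$ then comes out exactly, with no tuning parameter, and the embedding in $\Z_{2g+1}$ is tautological as in your last step. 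If you want to rescue your version, replace the symbol-shift mate by a shift \emph{within} each row/column of the region, and expect to abandon the constant-width band in favour of a region (like the three-row band) whose columns are full relative to the region.
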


\begin{proof}
To construct such a trade, let $W$ be the first three rows of
$\Z_{2g+1}$.  Then form $B$ by permuting the
rows of $W$ by shifting the top row to the bottom. 
Clearly $(W,B)$ is
a latin bitrade. Since each column has three symbols and each symbol
occurs three times, the columns and symbols are separated.

As each symbol $s$ is replaced by symbol $s+1$ (mod $2g+1$) in the first
two rows, these rows are separated. In the last row, $s$ is replaced by
$s-2$ (mod $2g+1$).  As $2g+1$ is odd, this row is also separated.
Thus we have a separated, connected latin bitrade, so its genus 
is well-defined. From \eref{e:genus}, the genus is
$$\tfrac12\big(2+|B|-|R|-|C|-|S|\big)
= \tfrac12\big(2+3(2g+1)-3-2(2g+1)\big)=g.$$
\end{proof}

\begin{lemma}\label{l:doesnotembed}
For each integer $g>0$, there exists a separated, connected latin
bitrade $(W,B)$ of genus $g$ such that $W$ does not embed in any
group.
\end{lemma}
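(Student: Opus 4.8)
The plan is to build, for each $g>0$, an explicit separated connected latin bitrade of genus $g$ whose white trade $W$ contains a configuration that obstructs embeddability in any group. The cleanest obstruction available to us is the row-permutation criterion highlighted in \egref{eg:noquadr}: if $W$ has rows $i$ and $j$ for which the permutation $\pi_{ij}$ (read the two rows as a permutation in two-row form) is not regular, then $W$ cannot be embedded in any group, since in a group table the permutation carrying one row to another is always a regular (in fact fixed-point-free or identity) permutation. So it suffices to construct a genus-$g$ bitrade in which some $\pi_{ij}$ has cycles of two different lengths.

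First I would fix a small ``seed'' bitrade on, say, three or four rows that is separated, connected, and for which two of its rows already give a non-regular $\pi_{ij}$ — for instance a trade built so that $\pi_{12}$ is a product of a transposition and a $3$-cycle on disjoint symbols, which is manifestly non-regular. The genus of this seed, computed from \eref{e:genus}, will be some fixed $g_0\ge1$. The main work is then an \emph{inflation} step: I would show how to enlarge the trade — e.g.\ by repeatedly splicing in an extra row together with a matching extra column and symbol, in a controlled local way (analogous to the ``first three rows of $\Z_n$'' device in \lref{l:doesembed}, or by inserting intercalate-like gadgets) — so that each insertion raises $|W|-|R|-|C|-|S|$ by exactly $2$, hence by \eref{e:genus} raises the genus by exactly $1$, while leaving rows $1$ and $2$ (and the symbols they act on) untouched, so that $\pi_{12}$ is unchanged and still non-regular, and while preserving separatedness and connectedness. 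Iterating from $g_0$ then realises every genus $g\ge g_0$; a small finite case-check, or a second seed of genus $1$, handles the finitely many values below $g_0$ so that all $g\ge1$ are covered.

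The key steps in order are: (1) exhibit the seed bitrade and verify directly that it is separated and connected and that some $\pi_{ij}$ is non-regular; (2) compute its genus via \eref{e:genus}; (3) describe the inflation move precisely as an operation on the triple-set $W\subseteq R\times C\times S$, and check that after the move the result is still a latin bitrade (Definition~\ref{def2}), still separated (each new row/column/symbol permutation is a single cycle), still connected, and has genus exactly one larger; (4) note that the move does not disturb the chosen pair of rows, so the non-regularity obstruction persists; (5) conclude by induction on $g$. The main obstacle I expect is step (3): designing the local gadget so that \emph{all three} of the bitrade axioms, separatedness of \emph{every} line, and connectedness are simultaneously maintained while the Euler count moves by exactly $2$ — getting the gadget to ``close up'' correctly without accidentally merging cycles or disconnecting the diagram is the delicate part. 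A convenient way to sidestep some of this bookkeeping is to phrase the inflation topologically on the triangulation $\G$ (a single vertex-split / face-subdivision that adds one face of each colour appropriately), where ``adds $1$ to the genus'' and ``stays a coherently oriented $2$-coloured triangulation'' are easier to see, and then translate back to the trade.
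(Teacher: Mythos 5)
Your outline is a plan rather than a proof: the two objects that carry all the content --- the seed bitrade with a non-regular $\pi_{ij}$ and, above all, the genus-raising gadget --- are never exhibited, and you yourself flag step (3) as the expected obstacle without resolving it. That step is genuinely delicate: to raise the genus by exactly one via \eref{e:genus} you must increase $|W|-|R|-|C|-|S|$ by $2$ (e.g.\ add three triples while introducing only one new line), and simultaneously extend the mate $B$, keep every row, column and symbol permutation a single cycle, keep the bitrade connected, and avoid touching the distinguished pair of rows. Nothing in the proposal shows such a local move exists, and plausibility is not enough here; the same goes for the unspecified ``finite case-check'' covering the genera below $g_0$. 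There is also a small soundness point you leave implicit: the non-regularity obstruction only makes sense when the two rows occupy the same set of columns and carry the same symbol set (otherwise the ``two-row form'' is not a permutation), and when it does apply the argument is that an embedding would make $\pi_{ij}$ the restriction of a left translation to an invariant set, hence regular. That condition must be built into the seed and preserved by the gadget, which is one more constraint your inflation move has to respect.

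For comparison, the paper sidesteps all of this with a single explicit family: for $g\ge2$ take the first three rows of $\Z_{2g+1}$, alter three entries of the third row (at columns $1$, $2$ and $2g$) so as to create two quadrangles violating the quadrangle criterion, and form $B$ by cycling the rows; the genus computation is then identical to that in \lref{l:doesembed}, and the case $g=1$ is \egref{eg:nogrp}. Your obstruction (non-regular $\pi_{ij}$, via Suschkewitsch) is a perfectly legitimate alternative to the quadrangle criterion, but until you write down a concrete seed and a concrete, fully verified inflation move --- or replace the induction by an explicit one-parameter family as the paper does --- the argument has a real gap.
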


\begin{proof}
\egref{eg:nogrp} suffices for $g=1$, so we suppose that $g\ge2$.
Let $S=\{0,1,2\}\times \{0,1,2,\dots ,2g\}\setminus \{(2,1),(2,2),(2,2g)\}$.
Define a partial latin square $W$ so that:
$$W = \big\{(2,1,4),(2,2,1),(2,2g,3)\big\}
\cup \big\{(i,j,i+j)\mid (i,j)\in S\big\},$$
with $i+j$ evaluated modulo $2g+1$. 
Next, form $B$ by shifting the top row of $W$ to the bottom.
For illustration, we show the pair $(W,B)$ when $g=3$:
\[W=
\begin{array}{|ccccccc|}
\hline
0&\boxed{1}&\boxed{2}&3&4&5&6\\
\looped{1}&\boxed{\looped{2}}&\boxed{3}&4&5&6&0\\
\looped{2}&\looped{4}&1&5&6&0&3\\
\hline
\end{array}
\qquad B=
\begin{array}{|ccccccc|}
\hline
1&2&3&4&5&6&0\\
2&4&1&5&6&0&3\\
0&1&2&3&4&5&6\\
\hline
\end{array}
\]
It is not hard to show that $(W,B)$ is a connected, separated latin
bitrade.  The calculation that $(W,B)$ has genus $g$ is the same as in
the previous lemma.  Moreover, $W$ cannot embed in a group as it fails
the quadrangle criterion, as shown by the quadrangles displayed in the
example above, which are present for all $g\ge2$.
\end{proof}

\subsection{Some contextual examples}\label{ss:eg}

In this subsection, we give some contextual examples.
In a number of cases we claim that the example we cite is the smallest
possible. All such claims are based on the enumeration of trades and
bitrades up to size 19, reported in \cite{tradenum} (the data is
downloadable from \cite{WWWW}). Note, by \lref{l:specinvar}, that
it suffices in all cases just to consider one representative of each
species.

In our next example, and a number of later examples, we 
establish that a given trade $W$ cannot be embedded in a certain type
of group. We do this by assuming it is embedded in such a group then
deriving a contradiction. To aid our calculations we adopt the
convention that the rows and columns, respectively, of $W$ are
labelled $r_0,r_1,r_2,\dots$ and $c_0,c_1,c_2,\dots$ in the order that
we give them in the example.  Also, if the example contains symbols
$0,1,2,\dots$, then we refer to these in our calculations as
$s_0,s_1,s_2,\dots$, respectively.

\begin{example}\label{eg:noncyc}
The smallest separated, connected bitrade $(W,B)$
such that $W$ cannot be embedded in any cyclic
group has size 10. The unique (up to interchanging $W$ and $B$)
example of that size is spherical. It is:
\[
W=\begin{array}{|cccc|}
\hline\nspacer
0&1&2&3\\[0.6ex]
1&0&\cdot&\cdot\\[0.6ex]
2&\cdot&1&\cdot\\[0.6ex]
\cdot&3&\cdot&0\sspacer\\
\hline
\end{array}
\qquad
B=
\begin{array}{|cccc|}
\hline\nspacer
2&3&1&0\\[0.6ex]
0&1&\cdot&\cdot\\[0.6ex]
1&\cdot&2&\cdot\\[0.6ex]
\cdot&0&\cdot&3\sspacer\\
\hline
\end{array}
\]
Suppose $W$ embeds in a cyclic group of order $m$ with operation
$\circ$ and identity $0$. 
Assuming, on the
basis of \lref{l:000}, that $r_0=c_0=s_0=0$ we then deduce that
$c_1=0\circ c_1=s_1=r_1\circ 0=r_1$ and $c_2=0\circ c_2=s_2=r_2\circ
0=r_2$.  Since $2c_1=r_1\circ c_1=0$ and $c_1\neq c_0=0$ we conclude
that $c_1=m/2$.  Now $2c_2=r_2\circ c_2=s_1=c_1$ so $c_2=\pm m/4$.
Also $r_3\circ c_3=0$ so $r_3=-c_3$, but $r_3\circ c_1=0\circ
c_3=c_3=-r_3$ so $2r_3=-c_1=m/2$. Thus $r_3=\pm m/4$. However this
means that $c_2$, $c_3$ and $r_3$ are each equal to $\pm m/4$. This is
impossible given that $r_3=r_3\circ c_0$ is distinct from both
$r_3\circ c_1=c_3$ and $r_2\circ c_0=c_2$, and $c_2$ and $c_3$ are
also distinct from each other.

A similar argument shows that $B$ cannot embed in a cyclic group of
order $m$. We deduce in turn that $r_0=c_0=s_2=0$, $r_2=c_2=m/2$,
$r_3\circ c_1=c_3$ and $r_3\circ c_3=c_1$. Combining these last two
equations we get $2r_3=0=2r_0=2r_2$, which contradicts the fact that
$r_0,r_2$ and $r_3$ are distinct.

Interestingly, although \tref{t:main} tells us that
$B$ can be embedded in an abelian group (in fact
both $B$ and $W$ embed in $\Z_2\oplus\Z_4$), the smallest group that $B$
embeds in is non-abelian. This follows from the above observation
that it does not embed in a cyclic group, the obvious fact that it does
not embed in $\Z_2\oplus\Z_2$, and the following embedding in $S_3$:
\[
\begin{array}{|cccccc|}
\hline\nspacer
\bf0&\bf1&\bf2&\bf3&4&5\\[0.5ex]
\bf1&0&3&\bf2&5&4\\[0.5ex]
\bf2&4&\bf0&5&1&3\\[0.5ex]
3&5&1&4&0&2\\[0.5ex]
4&2&5&0&3&1\\[0.5ex]
5&\bf3&4&\bf1&2&0\sspacer\\
\hline
\end{array}
\]
\end{example}

\begin{example}\label{eg:nonab}
The smallest connected separated bitrade $(W,B)$
in which $W$ embeds in a group but not in any abelian group has
size 14. An example of this size is:
\[
\begin{array}{|cccccc|}
\hline\nspacer
\looped{\bf0}&\boxed{\bf1}&\looped{\bf2}&3&\boxed{\bf4}&\bf5\\
\looped{\bf1}&\boxed{\bf0}&\looped4&5&\boxed{\bf2}&3\\
\bf2&\bf5&\bf0&4&3&1\\
3&4&5&0&1&2\\
4&3&1&2&5&0\\
\bf5&2&3&1&\bf0&\bf4\sspacer\\
\hline
\end{array}
\qquad
\begin{array}{|cccccc|}
\hline\nspacer
1&5&0&\cdot&2&4\\
2&1&\cdot&\cdot&0&\cdot\\
5&0&2&\cdot&\cdot&\cdot\\
\cdot&\cdot&\cdot&\cdot&\cdot&\cdot\\
\cdot&\cdot&\cdot&\cdot&\cdot&\cdot\\
0&\cdot&\cdot&\cdot&4&5\sspacer\\
\hline
\end{array}
\]
The left hand square is the non-abelian group $S_3$, in which the
trade $W$ is shown in {\bf bold}. The disjoint mate $B$ is given in
the right hand square.  To see that there is no smaller example, it
suffices by \tref{t:main} to check the $33$ smaller species of genus
$1$. These include \egref{eg:onegrp}, $6$ bitrades in which both trades
are embeddable in abelian groups and $26$ bitrades in which both trades
fail to embed in any group (as can be shown using the quadrangle
criterion). 

For the sake of contradiction, assume that $W$ can be embedded in an
abelian group with operation $\circ$ and identity $0$.  Assuming, on
the basis of \lref{l:000}, that $r_0=c_0=s_0=0$ we then deduce that
$c_1=0\circ c_1=s_1=r_1\circ 0=r_1$ and $c_2=0\circ c_2=s_2=r_2\circ
0=r_2$. Now observe that the two displayed quadrangles force $c_1\circ
r_2=r_1\circ c_2=s_4\neq s_5=r_2\circ c_1$, which violates the
assumption that $\circ$ is abelian.
\end{example}

\begin{example}\label{eg:nonsep}
The (implicit) condition in \tref{t:main} that the trade needs to be separated
cannot be abandoned. Consider the following example:
\[
\begin{array}{|cccc|}
\hline\nspacer
0&\boxed1&\boxed2&3\\
\looped1&\boxed{\looped2}&\boxed0&\cdot\\
\looped2&\looped3&\cdot&1\sspacer\\
\hline
\end{array}
\qquad
\begin{array}{|cccc|}
\hline\nspacer
\looped2&\boxed{\looped3}&0&\boxed1\\
\looped0&\looped1&2&\cdot\\
1&\boxed2&\cdot&\boxed3\sspacer\\
\hline
\end{array}
\]
This connected bitrade is not separated since $\psi_0=(02)(13)$,
although it may be separated into a spherical latin bitrade via the
process outlined in \sref{ss:genus}.  Both trades fail the quadrangle
criterion (as demonstrated by the quadrangles shown) and hence cannot
be embedded in any group.
\end{example}

\subsection{Embeddings in infinite groups}\label{ss:inf}

Next, we examine embeddings into infinite, not necessarily abelian, groups.
The general theme is that if a trade embeds in an infinite group then it
usually embeds in a related ``smaller'' group. We remind the reader that
in this paper trades are finite by definition.

\begin{lemma}\label{l:nathom}
Suppose $H$ is a normal subgroup of $G$ and $\phi$ is the natural
homomorphism $\phi:G\mapsto G/H$. Let $P$ be any PLS embedded in $G$.
If there exist $k_1,k_2\in G$ such that $\phi(x)=k_1H$ and $\phi(y)=k_2H$
for all triples $(x,y,z)\in P$, then $P$ embeds in $H$.
\end{lemma}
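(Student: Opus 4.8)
The plan is to use the hypothesis to ``factor out'' the coset behaviour in the row and column coordinates, leaving an embedded copy of $P$ whose row and column labels all lie in $H$, and whose symbol labels are then forced into $H$ as well. First I would pick any triple and, invoking the freedom granted by \lref{l:000} (or simply composing with a left/right translation of $G$ directly, since we are not claiming the isotopy is of any restricted form), normalise so that $k_1H=k_2H=H$; concretely, replace the embedding $(r,c,s)\mapsto rc=s$ by $(r,c,s)\mapsto (k_1^{-1}r)(ck_2^{-1})=k_1^{-1}sk_2^{-1}$, which is again an embedding of $P$ in $G$ (translations on the left and right are isotopies preserving the group). After this normalisation the hypothesis says $\phi(x)=H$ and $\phi(y)=H$ for every triple $(x,y,z)\in P$, i.e.\ every row label and every column label of the (re-normalised) embedding lies in $H$.

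The second step is to observe that the symbol labels are then automatically in $H$: if $(x,y,z)\in P$ then $z=xy$ with $x,y\in H$, and since $H$ is a subgroup $z\in H$. Hence $R\cup C\cup S\subseteq H$ and the relation $r\star c=s$ holds inside $H$ with the group operation of $G$ restricted to $H$, which is exactly an embedding of $P$ in $H$. Thus $P$ embeds in $H$, as claimed. Strictly, to match the definition of ``embeds'' I would note that the composition of the original isotopy realising $P$ in $G$ with the three translation maps $x\mapsto k_1^{-1}x$, $y\mapsto yk_2^{-1}$, $z\mapsto k_1^{-1}zk_2^{-1}$ is again an isotopy, so $P$ is isotopic to a PLS actually embedded in $H$.

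I expect the only real subtlety to be bookkeeping rather than mathematical depth: one must be careful that the normalising maps genuinely form an isotopy in the sense of \sref{s:embed} (a triple of bijections on $R$, $C$, $S$, extended to $G$), and that the correct one-sided translations are used so that the product relation is preserved — on the symbol coordinate one needs the \emph{two}-sided translate $z\mapsto k_1^{-1}zk_2^{-1}$, not a one-sided one, precisely because $(k_1^{-1}x)(yk_2^{-1})=k_1^{-1}(xy)k_2^{-1}$. No commutativity is needed, so the argument works for arbitrary $G$, which is the point: the lemma will be applied to reduce embeddability questions in infinite groups to embeddability in smaller ones.
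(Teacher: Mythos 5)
Your proposal is correct and follows essentially the same route as the paper: translate the embedded copy of $P$ by fixed group elements (you use $x\mapsto k_1^{-1}x$, $y\mapsto yk_2^{-1}$, $z\mapsto k_1^{-1}zk_2^{-1}$; the paper uses $x\mapsto k_2^{-1}k_1^{-1}xk_2$, $y\mapsto k_2^{-1}y$, $z\mapsto k_2^{-1}k_1^{-1}z$), check that this isotopy preserves the products, and conclude that all labels now lie in $H$. The difference is only in the bookkeeping of which one- or two-sided translations are used, and both choices work.
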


\begin{proof}
Consider the isotopy which maps each $(x,y,z)\in P$ to
$$(k_2^{-1}k_1^{-1}xk_2,k_2^{-1}y,k_2^{-1}k_1^{-1}z).$$  This is a well
defined isotopy since $k_1$ and $k_2$ are fixed.  It provides another
embedding of $P$ in $G$ because
\[
(k_2^{-1}k_1^{-1}xk_2)(k_2^{-1}y)
=k_2^{-1}k_1^{-1}xy
=k_2^{-1}k_1^{-1}z.
\]
Moreover, 
$\phi(k_2^{-1}k_1^{-1}xk_2)=\phi(k_1)^{-1}\phi(x)=H$
and $\phi(k_2^{-1}y)=\phi(k_2)^{-1}\phi(y)=H$ so we have an embedding of
$P$ in $H$.
\end{proof}

\begin{lemma}\label{l:dropZ}
Suppose $G$ is a group and $\phi:G\mapsto\Z$ a homomorphism.  Let
$(W,B)$ be a connected latin bitrade such that $W$ is
embedded in $G$.  Then $W$ embeds in the kernel of~$\phi$.
\end{lemma}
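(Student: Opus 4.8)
The plan is to show that $\phi$ is forced to be constant on all the entries appearing in $W$, after which $W$ already lies inside $\ker\phi$ and we are done. Fix a triple $(r,c,s)\in W$; since $W$ is embedded in $G$ we have $rc=s$, hence $\phi(r)+\phi(c)=\phi(s)$ in $\Z$ (written additively). Now invoke the bitrade structure from \dref{d:bitrade2}: for this triple there are $r'\ne r$, $c'\ne c$, $s'\ne s$ with $(r',c,s),(r,c',s),(r,c,s')\in B$, and because $(W,B)$ is a bitrade each of $r',c',s'$ in turn lies in a triple of $W$. The key observation is the local relation: comparing $(r,c,s)\in W$ with the fact that in $B$ the cell $(r',c)$ holds $s$ and $(r,c')$ holds $s$, and then chasing these back into $W$, one obtains equalities among the $\phi$-values of neighbouring rows, columns and symbols. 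More precisely, I would define, for each row $x\in R$, column $y\in C$ and symbol $z\in S$ used by $W$, the integers $\phi(x),\phi(y),\phi(z)$, and show that moving from one triple of $W$ to an ``adjacent'' one (sharing a row, column, or symbol, via the $B$-mate) changes these integers in a way that must cancel.

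The cleanest way to organise this is through the triangulation $\G_{W,B}$ — or, if $(W,B)$ is not separated, one can first separate it (this does not affect connectedness, by the remark after \egref{sepa}, and an embedding of $W$ pulls back along the separation since separation only splits a row/column/symbol into several copies carrying the same group element). Assign to every vertex $v$ of $\G_{W,B}$ the integer $\phi(v)\in\Z$. Each white face $(r,c,s)$ satisfies $\phi(r)+\phi(c)=\phi(s)$. Each black face $(r',c,s)$ with $r'$ in some white triple $(r',c_*,s_*)$ — actually the right statement is: orient edges as in \sref{ss:genus} so white faces go row$\to$column$\to$symbol$\to$row; then around a black face the same three vertices satisfy the reverse incidence, and the bitrade condition says each black face also satisfies an additive relation among the $\phi$'s of \emph{its} three vertices. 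From $rc=s$ for white and the analogous product for the black mate sharing two of these vertices, subtracting gives that the $\phi$-value of the differing vertex in $B$ equals the $\phi$-value of the corresponding vertex in the white face it belongs to, forced by an integer equation with no torsion to absorb discrepancies. Propagating this across the connected graph $\G_{W,B}$ shows all $\phi(r)$ are equal, all $\phi(c)$ are equal, all $\phi(s)$ are equal, say to $a,b,a+b$ respectively; then applying \lref{l:nathom} with $H=\ker\phi$, $k_1,k_2$ chosen so that $\phi(k_1)=a$, $\phi(k_2)=b$ (possible since $\Z$ is the image, or directly since $a,b$ lie in the image of $\phi$), yields an embedding of $W$ in $\ker\phi$.

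The main obstacle is pinning down exactly which additive relation the black faces satisfy and why the ``adjacent white triple'' comparison forces equality rather than merely a bounded discrepancy; here the torsion-freeness of $\Z$ is essential (the analogue over a finite group is false, cf.\ \egref{eg:nogrp}), and connectedness of $(W,B)$ is exactly what lets a single local cancellation propagate to a global constant. A secondary technical point is handling non-separated bitrades cleanly — I would dispose of this at the outset by separating, noting the embedding and the homomorphism $\phi$ descend to the separated bitrade without change, since the new split rows/columns/symbols are just relabelled copies of old ones and can be assigned the same elements of $G$.
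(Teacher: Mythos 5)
There is a genuine gap at the heart of your argument: the propagation mechanism you rely on does not exist. Only $W$ is embedded in $G$; the disjoint mate $B$ carries no group relations at all, so a black face $(r',c,s)$ gives you no equation among $\phi(r'),\phi(c),\phi(s)$. The step you describe as ``the bitrade condition says each black face also satisfies an additive relation among the $\phi$'s of its three vertices'' is exactly the unproven (and in general unprovable-by-local-means) claim, and you flag it yourself as ``the main obstacle'' without resolving it. Concretely, the only relations available are $\phi(x)+\phi(y)=\phi(z)$ for $(x,y,z)\in W$; if two white triples $(r_1,c,s_1)$ and $(r_2,c,s_2)$ share a column, these give $\phi(r_1)+\phi(c)=\phi(s_1)$ and $\phi(r_2)+\phi(c)=\phi(s_2)$, which constrains differences such as $\phi(r_1)-\phi(r_2)=\phi(s_1)-\phi(s_2)$ but never forces $\phi(r_1)=\phi(r_2)$; summing such differences around a row or column cycle telescopes to $0$ and yields nothing. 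So torsion-freeness alone, applied locally, cannot force the constancy of $\phi$ on rows, columns and symbols, even though that constancy is in fact the correct intermediate statement.

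The paper closes this gap with a global extremal argument that uses the \emph{ordering} of $\Z$, not just its lack of torsion: set $s=\max\{\phi(z):(x,y,z)\in W\}$ and $r=\max\{\phi(x):(x,y,z)\in W,\ \phi(z)=s\}$, and let $R,C$ be the rows and columns realising this extreme level. Using the conditions of \dref{d:bitrade2} together with the two maximality choices, one shows that the restriction of $(W,B)$ to $R\times C$ is itself a bitrade (any $B$-triple at the extreme level is trapped there: one bitrade axiom plus maximality of $r$ gives $\phi(y')\ge s-r$, the other plus maximality of $s$ gives $\phi(y')\le s-r$). Connectedness then forces $W$ to have no entries outside $R\times C$, which is precisely the constancy you wanted, and \lref{l:nathom} finishes as in your last step (that part of your proposal is fine, as is the optional preliminary separation). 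If you want to salvage your write-up, replace the ``local cancellation propagates'' paragraph by this maximum-level argument; without some such global use of the order structure of $\Z$, the proof does not go through.
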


\begin{proof}
Define
\vskip-10pt\vskip-10pt
\begin{align*}
s&=\max\big\{\phi(z):(x,y,z)\in W\big\},\\
r&=\max\big\{\phi(x):(x,y,z)\in W,\ \phi(z)=s\big\},\\
R&=\big\{x:\exists(x,y,z)\in W,\ \phi(x)=r,\phi(z)=s\big\},\\
C&=\big\{y:\exists(x,y,z)\in W,\ \phi(x)=r,\phi(y)=s-r\big\}.
\end{align*}

Let $W'$ and $B'$ be the restriction of $W$ and $B$, respectively, to
the array defined by $R\times C$. 
Consider $(x,y,z)\in W$. Then $(x,y,z)\in W'$ if and
only if $\phi(x)=r$ and $\phi(z)=s$. On the other hand, consider
$(x,y',z)\in B$ such that $\phi(x)=r$ and $\phi(z)=s$.  By
\dref{d:bitrade2} there exist $x',z'$ such that $W$ contains the
triples $(x',y',z)$ and $(x,y',z')$.  By our choice of $r$,
$\phi(y')=\phi(z)-\phi(x')\ge s-r$ and by our choice of $s$,
$\phi(y')=\phi(z')-\phi(x)\le s-r$.  Hence $\phi(y')=s-r$ and $y'\in
C$. Thus if $(x,y',z)\in B$, $\phi(x)=r$ and $\phi(z)=s$, then
$(x,y',z)\in B'$.

By \dref{d:bitrade2}, for each $(x,y,z)\in W'$, there
is a unique $y'\neq y$ such that $(x,y',z)\in B$. From above, each such 
$(x,y',z)$ is in $B'$. Now, in each row of $R\times C$,  
$W'$ and $B'$ contain the same number of symbols, so for any $(x,y,z)\in B$
it follows that
$(x,y,z)\in B'$ if and only if  $\phi(x)=r$ and $\phi(z)=s$. 
Thus, using the notation from \sref{ss:genus}, for each $x\in R$, 
$\psi_x$ setwise fixes the set of symbols contained in columns from $C$. 

Next, suppose there is some $(x,y,z)\in B'$ and $(x',y,z)\in W$.  
Then $\phi(x)=r$ and $\phi(z)=s$ from above. Thus
$\phi(x')=\phi(z)-\phi(y)=r$, so $x'\in R$. Thus, for
each column $y\in C$, $\psi_y$ setwise fixes the set of symbols in
that column contained in rows from $R$.  Thus, the restriction of
$(W,B)$ to $R\times C$ is a bitrade.  Since $(W,B)$ is connected, it
contains no entries outside $R\times C$.  Finally, apply
\lref{l:nathom}.\end{proof}

In a finitely generated abelian group $G$, the {\em torsion subgroup}
$H$ is the subgroup consisting of all elements of finite order.  It is
well known that $G/H$ is a direct sum of finitely many copies
of~$\Z$. Thus by repeated use of \lref{l:dropZ} we have:

\begin{corollary}\label{c:torsion}
Let $(W,B)$ be a connected latin bitrade such that $W$ embeds
in a finitely generated abelian group $G$. Then $W$ embeds in the
torsion subgroup of $G$.
\end{corollary}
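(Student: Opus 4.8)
The plan is to prove this by induction on the free rank $k$ of $G$, that is, on the rank of the free abelian group $G/H$ (which is finite by the fact cited just above the statement). If $k=0$ then $G=H$ and there is nothing to prove, so suppose $k\ge 1$ and that the result holds for all finitely generated abelian groups of smaller free rank.

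For the inductive step, I first use the definition of embedding: since $W$ embeds in $G$ there is a latin bitrade $(W_0,B_0)$, isotopic to $(W,B)$, with $W_0$ literally embedded in $G$. Isotopy preserves both the bitrade property and connectedness, so $(W_0,B_0)$ is a connected latin bitrade with $W_0$ embedded in $G$, which is exactly the hypothesis needed to invoke \lref{l:dropZ}. Because $G/H$ is free of rank $k\ge 1$, fixing an isomorphism $G/H\iso\Z^k$ and composing the quotient map $G\to G/H$ with a coordinate projection $\Z^k\to\Z$ produces a surjective homomorphism $\phi\colon G\to\Z$. Applying \lref{l:dropZ} to $(W_0,B_0)$ and $\phi$ shows that $W_0$, and hence $W$, embeds in $\ker\phi$.

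It remains to check that $\ker\phi$ is a legitimate input for the induction hypothesis. It is a subgroup of a finitely generated abelian group, hence finitely generated and abelian. Its torsion subgroup is exactly $H$: every element of $H$ has finite order and so maps to $0$ in the torsion-free group $\Z$, giving $H\subseteq\ker\phi$, and therefore the torsion subgroup of $\ker\phi$ is $\ker\phi\cap H=H$. Moreover $\phi$ is surjective, so $G/\ker\phi\iso\Z$; since $H\subseteq\ker\phi$ the induced map $G/H\to\Z$ is then a surjection of free abelian groups whose kernel $\ker\phi/H$ is free of rank $k-1$, so $\ker\phi$ has free rank $k-1<k$. By the induction hypothesis, $W$ embeds in the torsion subgroup of $\ker\phi$, which is $H$, completing the induction.

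Essentially all of the content lives in \lref{l:dropZ}; the corollary is a finite iteration that peels off one copy of $\Z$ at a time. The only points requiring care are the bookkeeping distinction between a PLS being \emph{embedded} in a group and merely being \emph{embeddable} in it (bridged by passing to an isotope immediately before each use of \lref{l:dropZ}) and the verification that the torsion subgroup is unchanged while the free rank strictly drops at each step; neither presents a genuine obstacle.
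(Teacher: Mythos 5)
Your proof is correct and follows essentially the same route as the paper, which simply says that since $G/H$ is a finite direct sum of copies of $\Z$, the result follows ``by repeated use of \lref{l:dropZ}''; your induction on the free rank, with the verification that the torsion subgroup is preserved while the rank drops, is just a careful formalisation of that iteration.
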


\begin{corollary}\label{c:fin}
Let $W$ be a trade. Then $W$ can be embedded in a finite
abelian group iff $W$ can be embedded in a finitely generated abelian group.
\end{corollary}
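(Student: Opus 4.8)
The statement to prove is Corollary~\ref{c:fin}: a trade $W$ embeds in a finite abelian group if and only if it embeds in a finitely generated abelian group.

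The plan is as follows. The forward direction is trivial, since every finite abelian group is finitely generated, so no embedding needs to be modified. For the reverse direction, suppose $W$ embeds in a finitely generated abelian group $G$. The first step is to reduce to the torsion subgroup: by Corollary~\ref{c:torsion}, $W$ already embeds in the torsion subgroup $H$ of $G$. The key point is that $H$ itself is a \emph{finite} abelian group: since $G$ is finitely generated abelian, $G\cong\Z^k\oplus H$ where $H$ is finite (this is the structure theorem for finitely generated abelian groups). Thus the torsion subgroup $H$ is finite, and $W$ embeds in it, which is exactly what we want.

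Wait—I should be careful about whether Corollary~\ref{c:torsion} requires $W$ to be part of a \emph{connected} bitrade. It does state ``Let $(W,B)$ be a connected latin bitrade.'' So the remaining wrinkle is to handle a general trade $W$, which a priori may only be part of a disconnected bitrade. The fix is standard: decompose the bitrade $(W,B)$ into connected components $(W_i,B_i)$. An embedding of $W$ in $G$ restricts to an embedding of each $W_i$ in $G$, so by Corollary~\ref{c:torsion} each $W_i$ embeds in the (finite) torsion subgroup $H$ of $G$. Then take the direct sum: $W=\bigcup_i W_i$ embeds in $\bigoplus_i H$, which is a finite abelian group. (One uses disjoint copies of the alphabets $R,C,S$ for the different components so that the union of embeddings is again an embedding; since the components use disjoint sets of rows, columns and symbols after we restrict $R\cup C\cup S$ to what is actually used, and embeddings are defined up to isotopy, this causes no difficulty.)

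The main obstacle is essentially bookkeeping rather than a genuine mathematical difficulty: one has to make sure the passage from ``connected components each embed in $H$'' to ``$W$ embeds in a finite group'' is carried out correctly, using direct sums and the species/isotopy invariance (Lemma~\ref{l:specinvar}) to recombine the component embeddings without collisions. The only substantive input is the structure theorem guaranteeing that the torsion subgroup of a finitely generated abelian group is finite; everything else follows from Corollary~\ref{c:torsion} applied componentwise.
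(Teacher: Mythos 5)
Your proof is correct and follows essentially the same route as the paper: the forward direction is trivial, and for the converse you apply Corollary~\ref{c:torsion} to each connected component of a bitrade $(W,B)$ and recombine the component embeddings via a direct sum, the torsion subgroup of a finitely generated abelian group being finite. Your extra remarks on the structure theorem and the bookkeeping for disjoint components are just a more explicit rendering of what the paper leaves implicit.
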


\begin{proof}
The ``only if'' direction is trivial.  For the ``if'' direction, apply
the previous corollary to each connected component of a latin bitrade
$(W,B)$, then take the direct sum of the resulting embeddings.
\end{proof}

Actually, a large finite cyclic group is locally indistinguishable
from $\Z$, so it is not hard to see that \cref{c:fin} is true for any
PLS, not just for trades. It suffices to replace each copy of $\Z$ by
$\Z_m$ where $m$ exceeds the maximum difference between any two
indices that the embedding uses in that copy of $\Z$.  However,
\cref{c:torsion} does not seem easy to obtain by such an argument, and
does not hold for all PLS (to see this, consider any PLS of size 5
embedded in $\Z_2\oplus\Z$).

A consequence of \cref{c:torsion} is that no trade can be
embedded in a torsion-free abelian group. However, it is possible to
embed a trade in a torsion-free non-abelian group, as we now show.

\begin{example}\label{eg:torsion}
Consider the group $G=\langle x,y\mid x^2yx^2=y,\ y^2xy^2=x\rangle$.
Let $z=xy$ and denote the identity element in $G$ by $\id$. Let $N$
be the subgroup of $G$ generated by $x^2$, $y^2$ and $z^2$.
The following relations are simple consequences of the relations for $G$.
\begin{equation}\label{e:normxyz}
\begin{matrix}
x^2y^2x^{-2}y^{-2}=x^2z^2x^{-2}z^{-2}=y^2z^2y^{-2}z^{-2}=\id\\
xy^2=y^{-2}x, \ xz^2=z^{-2}x, \ yx^2=x^{-2}y, \ zx^2=x^{-2}z, \ 
zy^2=y^{-2}z, \ yz^2=z^{-2}y. 
\end{matrix}
\end{equation}
It follows that $N$ is normal and abelian. By a
Reidemeister-Schreier rewriting process it can be shown that 
$N$ is a free abelian group of rank 3. Moreover,
$$G/N\iso\<x,y\mid x^2=y^2=(xy)^2=\id\>\iso\Z_2\oplus\Z_2.$$
Each element of $G$ may be written in the form 
$x^ay^b(x^2)^\alpha(y^2)^\beta(z^2)^\gamma$ where $a,b\in\{0,1\}$
and $\alpha,\beta,\gamma\in\Z$. Elements of this form may be
multiplied using the relations \eref{e:normxyz}. We find that
\begin{eqnarray*}
\big(x(x^2)^\alpha(y^2)^\beta(z^2)^\gamma\big)^2=x^{4\alpha+2},\\
\big(y(x^2)^\alpha(y^2)^\beta(z^2)^\gamma\big)^2=y^{4\beta+2},\\
\big(z(x^2)^\alpha(y^2)^\beta(z^2)^\gamma\big)^2=z^{4\gamma+2},
\end{eqnarray*}
each of which is a non-identity element of $N$. Hence the square of every
element of $G$ has infinite order, from which it follows
that $G$ is torsion-free.
Finally, we note that a spherical latin trade embeds in $G$:
$$\begin{array}{c|cccc}
&\id&yx&x&y^{-1}x^2\\
\hline
\id&\id&yx&x&\cdot\\
x^{-1}y^{-1}&x^{-1}y^{-1}&\id&\cdot&y^2x\\
x&x&\cdot&x^2&x^{-1}y^{-1}\\
y&\cdot&y^2x&yx&x^2\\
\end{array}
$$
\end{example}

\section{The main results}\label{s:main}

In this section we prove the main results of the paper, including 
\tref{t:main}. We start by outlining some notational conventions which
apply throughout the section.

We will use additive notation for groups to emphasise that they are
all abelian.
Throughout, $W\subseteq R\times C\times S$ 
is a latin trade of size $n+1$ with a fixed disjoint mate
$B$ such that $(W,B)$ is a spherical bitrade.  As in \sref{ss:genus},
we assume $R$, $C$ and $S$ are disjoint and we omit any unused rows,
columns and symbols.  We consider $\V=R\cup C\cup S$ to be a set of
{\em commuting} indeterminates.
From $W$ we define a group $\A$ with the 
following presentation:
\[\A=
\<\V\mid\{r+c+s=0:(r,c,s)\in W\}\>.
\]
Like all groups in this section, it is immediate from its 
definition that $\A$ is abelian. We now proceed to deduce some much
deeper properties of $\A$, which will eventually lead to a proof
of \tref{t:main}.

We start by considering the standard matrix presentation of $\A$.
We build $\M$, a $(0,1)$-matrix with rows indexed by $\V$ and columns
indexed by $W$, in which there are exactly three 1's per column. The
1's in the column indexed by $(r,c,s)\in W$ appear in the rows
indexed by $r$, $c$ and $s$. Since $(W,B)$ has genus $0$ and
size $n+1$, we know from \eref{e:genus} that $\M$ is an
$(n+3)\times(n+1)$ matrix.  In particular, it has more rows than
columns, which means that $\A$ is infinite. However, we can make use
of \lref{l:000} to create a related finite group which might help us
to prove \tref{t:main}.

In the triangulation $\G$, we consider the faces corresponding to
elements of $W$ to be {\em white} and faces corresponding to elements
of $B$ to be {\em black}.  We identify one white triangle as the {\em
special white triangle} $T_\Delta$; say
$T_\Delta=(r_0,c_0,s_0)$.  We then define a group $\A^*$ with the
following presentation:
\[\A^*=
\<\V\mid\{r+c+s=0:(r,c,s)\in W\}\cup \{r_0=c_0=s_0=0\}\>
\]
We also define a corresponding matrix presentation $\M^*$, which is
obtained from $\M$ by deleting the column corresponding to $T_\Delta$ and
the rows corresponding to $r_0$, $c_0$ and $s_0$. It follows that
$\M^*$ is an $n\times n$ matrix. On the face of it, $\A^*$ depends on
the choice of $T_\Delta$. However, this dependence is superficial,
as we show below.
 
\begin{lemma}\label{l:AAstar}
$\A\iso\A^*\oplus\Z\oplus\Z$.
\end{lemma}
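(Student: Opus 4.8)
The plan is to exhibit an explicit isomorphism built on the fact that $\A$ has the presentation matrix $\M$ of size $(n+3)\times(n+1)$, while $\A^*$ has the $n\times n$ matrix $\M^*$ obtained by deleting one column and three rows. The strategy is: (1) show $\A$ has free rank exactly two, so $\A\cong\A^*_{\mathrm{tor}}\oplus\Z^2$ abstractly — but in fact we want the cleaner statement $\A\cong\A^*\oplus\Z^2$, i.e. that killing $r_0,c_0,s_0$ exactly quotients out a free $\Z^2$ direct summand; and (2) verify that $\A^*$ itself has no additional free part beyond what is needed, equivalently that $\M^*$ is a square matrix of full rank over $\Q$. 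The key structural input is the genus-zero (planar) hypothesis on $(W,B)$.

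First I would set up the linear-algebra picture over $\Q$. The relations say the columns of $\M$ span a sublattice; $\A\otimes\Q$ is the cokernel $\Q^{\V}/\mathrm{col}(\M)$, whose dimension is $(n+3)-\mathrm{rank}_\Q\M$. So the claim that $\A$ has free rank two is equivalent to $\mathrm{rank}_\Q\M=n+1$, i.e. the $n+1$ columns of $\M$ are linearly independent over $\Q$. I would prove this using the planarity: think of $\M^{T}$ as (a piece of) the face–vertex incidence matrix of the planar triangulation $\G$, restricted to white faces. A dependence among the columns of $\M$ is an assignment of weights to the white triangles so that every vertex of $\G$ has weighted incidence zero; using the bipartite-like structure coming from the coherent orientation (each edge lies on exactly one white and one black face) and connectedness of $(W,B)$, such a weighting must be constant on... actually it must be zero, because around any vertex the white faces and black faces alternate and the "link" of a vertex in a planar triangulation is a single cycle — chasing the weights around forces them all equal, and then a global count (the total signed something, or just evaluating at the special triangle) forces the common value to be $0$. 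This establishes $\mathrm{rank}_\Q\M=n+1$.

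Next I would relate $\A$ and $\A^*$ directly. Passing from $\A$ to $\A^*$ adds the relations $r_0=c_0=s_0=0$. I claim these three new relations, modulo the old ones, are equivalent to just \emph{two} independent relations — because the white relation for $T_\Delta=(r_0,c_0,s_0)$ already gives $r_0+c_0+s_0=0$, so setting any two of them to $0$ forces the third. Hence $\A^*=\A/\langle r_0,c_0\rangle$ where $r_0,c_0$ generate a subgroup; I want to show this subgroup is free of rank two and is a direct summand. By the rank computation, $\A\cong\Z^2\oplus(\text{finite})$, so it suffices to show the images of $r_0$ and $c_0$ in $\A$ are $\Q$-linearly independent, i.e. no nontrivial combination $ar_0+bc_0$ lies in the torsion subgroup, equivalently $ar_0+bc_0=0$ in $\A\otimes\Q$ forces $a=b=0$. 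In $\A\otimes\Q=\Q^{\V}/\mathrm{col}(\M)$ (a 2-dimensional space), this is the statement that the images of the two basis vectors $e_{r_0}$ and $e_{c_0}$ are linearly independent — which again I would extract from planarity: if $ae_{r_0}+be_{c_0}$ were in the column space of $\M$ with $(a,b)\neq(0,0)$, pairing with a suitable cocycle (or again running the weight-chasing argument, now with a source term at $r_0$ and $c_0$) yields a contradiction. Once $\langle r_0,c_0\rangle\cong\Z^2$ is a direct summand, $\A\cong\A^*\oplus\Z^2$ and, since $\A^*$ is then finite (its matrix $\M^*$ is square of full rank — this drops out of the same rank count after verifying the row/column deletions don't lower the rank, which they don't precisely because $r_0,c_0,s_0$ and the column $T_\Delta$ were the "redundant" ones), we are done.

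The main obstacle I expect is the rank-$(n+1)$ statement, i.e. proving that the columns of $\M$ are linearly independent — everything else is bookkeeping once that is in hand. Concretely the difficulty is the combinatorial topology: one must use that $\G$ is an orientable genus-$0$ triangulation with a coherent bipartite face-colouring and that $(W,B)$ is connected, and turn a hypothetical linear dependence on white faces into a contradiction by a monodromy/weight-propagation argument around vertex links. I would be prepared for the authors instead to do this via Euler's formula plus an explicit Smith-normal-form or determinant argument on $\M^*$ (the abstract even advertises that $\det\M^*=\pm\per\M^*\neq0$), in which case the independence of the columns of $\M$ follows a posteriori from $\det\M^*\neq0$; but the clean conceptual route is the link-monodromy argument sketched above.
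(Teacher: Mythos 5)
The decisive gap is in your step (3). From ``$\A\iso\Z^2\oplus(\text{finite})$'' together with the $\mathbb{Q}$-linear independence of the images of $r_0$ and $c_0$ you conclude that $\langle r_0,c_0\rangle$ is a free direct summand, hence $\A\iso\A^*\oplus\Z^2$; that implication is false. Take $\A=\Z^2$ and $H=\langle(2,0),(0,1)\rangle$: the two generators are $\mathbb{Q}$-independent and $H\iso\Z^2$, yet $\A/H\iso\Z_2$ while $\A\not\iso\Z_2\oplus\Z^2$. Independence over $\mathbb{Q}$ only yields that $\A/\langle r_0,c_0\rangle=\A^*$ is finite; for the splitting you need the strictly stronger fact that $r_0,c_0$ generate $\A$ \emph{modulo torsion}, equivalently that some homomorphism $\A\to\Z\oplus\Z$ carries $r_0,c_0$ to a basis. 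The natural such map --- every row $r\mapsto(1,0)$, every column $c\mapsto(0,1)$, every symbol $s\mapsto(-1,-1)$, which annihilates each relation $r+c+s=0$ --- is precisely the missing ingredient, and once you write it down you have essentially reconstructed the paper's proof: the paper simply defines $\xi:\A\to\A^*\oplus\Z^2$ by $\xi r=(r,1,0)$, $\xi c=(c,0,1)$, $\xi s=(s,-1,-1)$ and exhibits the explicit inverse with $\xi^{-1}(0,1,0)=r_0$, $\xi^{-1}(0,0,1)=c_0$, $\xi^{-1}(r,0,0)=r-r_0$, and similarly for columns and symbols.

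A secondary problem is your reliance on $\operatorname{rank}_{\mathbb{Q}}\M=n+1$. The ``link-monodromy'' sketch does not prove it: the constraint at a vertex is only that the weights of the white triangles incident with that vertex sum to zero, which in no way forces those weights to be equal; in the paper this rank statement only becomes available through the later permanent/determinant results ($\per(\M^*)=|\det(\M^*)|\ge2$). More to the point, none of that machinery is needed here. The lemma is a purely formal statement about the two presentations --- each defining relation involves exactly one row, one column and one symbol, and the relation of the special triangle makes $s_0=0$ redundant --- and it holds without planarity, without knowing the rank of $\M$, and without knowing that $\A^*$ is finite. Finiteness is proved afterwards and is exactly what upgrades this lemma to ``$\A^*$ is the torsion subgroup and $\A$ has free rank two''; your proposal runs that logic backwards, and its key splitting step fails as written.
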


\begin{proof}
Consider the homomorphism $\xi:\A\mapsto\A^*\oplus\Z^2$ satisfying
$\xi r=(r,1,0)$, $\xi c=(c,0,1)$ and $\xi s=(s,-1,-1)$, for every
$r\in R$, $c\in C$ and $s\in S$. It has an inverse satisfying
$\xi^{-1}(0,1,0)=r_0$, $\xi^{-1}(0,0,1)=c_0$, $\xi^{-1}(r,0,0)=r-r_0$,
$\xi^{-1}(c,0,0)=c-c_0$ and $\xi^{-1}(s,0,0)=s-s_0$, for every
$r\in R$, $c\in C$ and $s\in S$. Thus it provides the required isomorphism.
\end{proof}

\begin{corollary}\label{c:allwhite}
Let $\A^\#$ be defined as for $\A^*$ except with a different
choice of $T_\Delta$. Then $\A^\#\iso\A^*$. Moreover, the isomorphism
preserves $r-r'$ for all $r,r'\in R\subset\V$, the common generating
set for $\A^\#$ and $\A^*$.
\end{corollary}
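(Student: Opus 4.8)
The plan is to exploit the intrinsic description of $\A$ furnished by \lref{l:AAstar}. Fix two white triangles, $T_\Delta=(r_0,c_0,s_0)$ and $T_\nabla=(r_1,c_1,s_1)$, and let $\A^*$ and $\A^\#$ be the corresponding quotients. By \lref{l:AAstar} applied to each choice, there are isomorphisms $\xi:\A\to\A^*\oplus\Z^2$ and $\eta:\A\to\A^\#\oplus\Z^2$. I would like to say ``compose $\eta$ with $\xi^{-1}$ to get the desired map''; the subtlety is that this composite is an automorphism of an external direct sum $(\text{torsion-ish part})\oplus\Z^2$, and it need not respect the splitting. So the real work is to identify, \emph{canonically and independently of the choice of special triangle}, the subgroup of $\A$ that maps isomorphically onto the first summand, together with the elements $r-r'$ for $r,r'\in R$.

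First I would observe from the formulas in the proof of \lref{l:AAstar} that $\xi^{-1}(r,0,0)=r-r_0$, and similarly for columns and symbols, so the copy of $\A^*$ inside $\A^*\oplus\Z^2$ pulls back under $\xi^{-1}$ to the subgroup $\A'$ of $\A$ generated by all differences $\{v-v'':v,v''\in X\}$ for $X\in\{R,C,S\}$ together with the ``cross differences'' $r_0-c_0$ and $r_0-s_0$ (equivalently, $c_0-s_0$, using the white relation $r_0+c_0+s_0=0$ is \emph{not} available in $\A$, but $r_0+c_0+s_0=0$ \emph{is} a relation of $\A$, so two of the three cross differences determine the third). The crucial point is that this subgroup $\A'\le\A$ does \emph{not} depend on which white triangle we singled out: for any other white triangle $(r_1,c_1,s_1)$ we have $r_1-c_1=(r_1-r_0)+(r_0-c_0)+(c_0-c_1)\in\A'$, so the cross differences coming from $T_\nabla$ already lie in $\A'$, and symmetrically $\A'$ contains the generators of the analogous subgroup built from $T_\nabla$. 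Hence $\A'$ is the \emph{same} subgroup no matter what, and $\xi$ restricts to an isomorphism $\A'\xrightarrow{\ \sim\ }\A^*$ while $\eta$ restricts to an isomorphism $\A'\xrightarrow{\ \sim\ }\A^\#$. Composing, $\eta|_{\A'}\circ(\xi|_{\A'})^{-1}:\A^*\to\A^\#$ is the required isomorphism.

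It remains to check the ``moreover'' clause: that this isomorphism sends the image of $r-r'$ in $\A^*$ to the image of $r-r'$ in $\A^\#$, for $r,r'\in R$. This is immediate from the explicit inverses: $(\xi|_{\A'})^{-1}$ sends the class of $r-r'$ in $\A^*$ (which is $\xi^{-1}(r,0,0)-\xi^{-1}(r',0,0)=(r-r_0)-(r'-r_0)=r-r'$ viewed in $\A'\le\A$) to the element $r-r'\in\A'$, and then $\eta|_{\A'}$ sends $r-r'\in\A'$ to $(r-r_1)-(r'-r_1)=r-r'$ in $\A^\#$. So the composite is the identity on these differences, as claimed; the same computation works with $C$ or $S$ in place of $R$.

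I expect the only genuine obstacle to be the bookkeeping in the first paragraph — pinning down precisely which subgroup of $\A$ corresponds to the first direct summand under $\xi^{-1}$ and verifying it is generator-for-generator independent of $T_\Delta$. Once the subgroup $\A'$ is correctly identified as ``the subgroup generated by all differences of generators of the same type, plus one cross-difference,'' everything else is a short unwinding of the formulas from \lref{l:AAstar}, and no new ideas are needed.
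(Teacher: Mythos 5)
Your overall strategy --- locating inside $\A$ a subgroup, defined independently of the special triangle, that $\xi$ and $\eta$ each carry isomorphically onto the first summand --- is sound and close in spirit to the paper's proof, but your identification of that subgroup is wrong, and the key step fails with it. The cross differences do not lie in $\xi^{-1}(\A^*\oplus0\oplus0)$: from the formulas of \lref{l:AAstar}, $\xi(r_0-c_0)=(r_0-c_0,1,-1)=(0,1,-1)$ and $\xi(r_0-s_0)=(r_0-s_0,2,1)=(0,2,1)$ (using $r_0=c_0=s_0=0$ in $\A^*$), both with nonzero projection to the $\Z^2$ factor. Hence your $\A'$ is strictly larger than the pullback of the first summand --- its image under $\xi$ is $\A^*\oplus\Lambda$ for a finite-index sublattice $\Lambda\le\Z^2$, so $\A'\iso\A^*\oplus\Z^2$ --- and the assertion that ``$\xi$ restricts to an isomorphism $\A'\to\A^*$'' is false. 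The lengthy check that the cross differences coming from $T_\nabla$ already lie in $\A'$ was only needed because of these spurious generators.

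The repair is immediate. Since $\xi$ is an isomorphism, the pullback of $\A^*\oplus0\oplus0$ is generated by $\xi^{-1}(r,0,0)=r-r_0$, $\xi^{-1}(c,0,0)=c-c_0$ and $\xi^{-1}(s,0,0)=s-s_0$, which is exactly the subgroup generated by the same-type differences alone (no cross differences). That subgroup is defined without any reference to $T_\Delta$, so its independence of the special triangle is automatic, and your final ``moreover'' computation then goes through verbatim. With this correction your argument essentially reproduces the paper's, which simply composes the two isomorphisms from \lref{l:AAstar} to get $\A^\#\oplus\Z^2\iso\A^*\oplus\Z^2$, observes that this map sends $(r-r',0,0)$ to $(r,1,0)-(r',1,0)=(r-r',0,0)$ (and, on generators, sends each first summand into the other), and restricts.
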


\begin{proof}
The proof of \lref{l:AAstar} shows that $\A^\#\oplus\Z^2\iso\A^*\oplus\Z^2$
by an isomorphism that maps $(r-r',0,0)$ to $(r,1,0)-(r',1,0)=(r-r',0,0)$.
The corollary follows.
\end{proof}

Of course, the isomorphism in \cref{c:allwhite} also preserves $c-c'$
and $s-s'$ for all $c,c'\in C$ and $s,s'\in S$. We next prove a lemma
about triangles embedded in the plane.

\begin{lemma}\label{l:VinT}
Let $G$ be a simple plane graph.
Suppose that $T_1,T_2,\dots,T_k$ are distinct triangular faces of $G$,
such that every edge of $G$ belongs to precisely one of
$T_1,\dots,T_k$. Then $G$ has at least
$k+2$ vertices. If $k>1$ and $G$ has only $k+2$ vertices then every
vertex of $G$ is incident with at least two of $T_1,T_2,\dots,T_k$.
\end{lemma}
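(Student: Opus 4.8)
The plan is to use Euler's formula for the (possibly disconnected) plane graph $G$, together with the hypothesis that the $T_i$ partition the edge set. Let $G$ have $v$ vertices, $e$ edges, and suppose it has $p$ connected components; embedded in the plane it then has $f = e - v + 1 + p$ faces (including the outer face). Each $T_i$ is a triangular face contributing $3$ edge-incidences, and since every edge lies in exactly one $T_i$, summing gives $3k = e$ counted once per edge... but wait, each edge lies on two faces, only one of which need be a $T_i$, so in fact $e = 3k$ exactly is \emph{not} forced; rather each of the $k$ triangles uses $3$ edges and these edge-sets are disjoint and exhaust $E(G)$, so indeed $e = 3k$. First I would record this: $e = 3k$.

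Next I would bound $v$ from below. Since $G$ is simple, $e \le \binom{v}{2}$, giving $3k \le \binom{v}{2}$; this is too weak for the linear bound $v \ge k+2$, so instead I would argue via faces. The $k$ triangles are distinct faces, so $f \ge k$, and combining with Euler, $e - v + 1 + p = f \ge k$, i.e. $3k - v + 1 + p \ge k$, whence $v \le 2k + 1 + p$ --- the wrong direction. So the face count must be pushed the other way: I want an \emph{upper} bound on $f$ in terms of $v$. The key extra input is that a \emph{simple} plane graph with $v \ge 3$ vertices has $e \le 3v - 6$; substituting $e = 3k$ gives $3k \le 3v - 6$, that is $v \ge k + 2$. (The cases $v \le 2$ are trivial since then $e \le 1$ forces $k \le 1$, and the hypothesis $T_1,\dots,T_k$ distinct faces with $k\ge1$ already needs a triangle, so $v\ge3$ whenever $k\ge1$; I would dispatch these degenerate cases at the outset.) This settles the first assertion.

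For the second assertion, suppose $k > 1$ and $v = k+2$, so equality holds in $e \le 3v-6$. It is classical that equality in $e = 3v - 6$ for a simple plane graph forces $G$ to be a \emph{triangulation} of the sphere: connected, and every face (including the outer one) is a triangle. Then $G$ has exactly $2v - 4 = 2k$ faces, so the $T_1,\dots,T_k$ together with the remaining $k$ faces partition the face set, and \emph{every} face is a triangle. I would then count incidences between vertices and the faces $T_1,\dots,T_k$: each $T_i$ has $3$ vertex-incidences, for a total of $3k$, distributed among $v = k+2$ vertices. I must show every vertex meets at least two of the $T_i$. Suppose some vertex $w$ is incident with at most one $T_i$. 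Since $G$ is a triangulation, $w$ has degree $d_w \ge 3$ and lies on exactly $d_w$ faces. Each edge at $w$ lies in exactly one $T_i$, and the $d_w$ faces around $w$ alternate... here is the crux: the $d_w$ edges at $w$ are partitioned among the $T_i$'s that contain $w$, and a triangle containing $w$ accounts for exactly $2$ of these edges, so the number of $T_i$ incident with $w$ is exactly $d_w/2$. Hence $d_w$ is even and $w$ lies on $d_w/2 \ge 2$ of the triangles (using $d_w \ge 3$, so $d_w \ge 4$). This is the step I expect to be the main obstacle --- correctly extracting that each triangle at $w$ consumes two edges at $w$, which needs the partition hypothesis applied locally, plus ruling out $d_w < 4$; but once phrased via the edge-partition around $w$ it is short. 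That completes the proof.
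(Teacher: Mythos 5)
Your proposal is correct, and at its core it is the same Euler-formula argument as the paper's: your use of $e=3k$ together with the classical bound $e\le 3v-6$ is just a repackaging of the paper's direct count (the paper bounds the number $m$ of non-$T_i$ faces by $k$ via face degrees and then applies Euler's formula, which is exactly how $e\le 3v-6$ is proved). Where you diverge is the equality case: the paper argues by hand that $v=k+2$ forces every remaining face to be a triangle, and then shows a vertex lying in only one $T_i$ would have degree $2$, producing a parallel edge (impossible for $k>1$ in a simple graph); you instead invoke the standard characterization of equality in $e\le 3v-6$ (maximal planar, connected, minimum degree $3$ once $v\ge4$) and combine it with the neat local identity $d_w=2t_w$, where $t_w$ is the number of $T_i$ through $w$ --- valid because every edge at $w$ lies in exactly one $T_i$, that $T_i$ contains $w$, and each $T_i$ through $w$ uses exactly two edges at $w$. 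Both routes hinge on the same fact (equality forces all faces to be triangles, so no degree-$2$ vertex can exist when $k>1$); yours outsources that fact to textbook results and gains a cleaner parity statement, while the paper's is self-contained and pinpoints the explicit parallel-edge obstruction. I see no gap: you correctly dispatch the degenerate cases $v\le 2$, and your use of $k>1$ (hence $v\ge4$) is exactly what justifies $d_w\ge3$, hence $d_w\ge4$ and $t_w\ge2$.
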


\begin{proof}
First suppose that $G$ is connected. 
We apply Euler's formula to $G$.
Let $U_1,\dots,U_m$ be the faces of $G$ other than $T_1,\dots,T_k$. 
From the given
conditions we know that $G$ has exactly $3k$ edges and every edge is
shared by some $T_i$ and some $U_j$. Since each $U_j$ has at least 3
edges, $3k\ge 3m$ and hence $k\ge m$.  Now Euler's formula tells us
the number of vertices in $G$ is $3k-(k+m)+2\ge k+2$.  If $G$ is not
connected, applying the above argument to each connected component
gives a similar result.

Moreover, the number of vertices in $G$ equals $k+2$ only if
$G$ is connected, $k=m$ and each $U_j$ is a triangle. Suppose $T_i$ has
vertex set $\{X,Y,Z\}$ where $Z$ is not a vertex of any $T_j$ for
$j\ne i$. Then the $U_l$ that contains $Z$ also contains $X$ and $Y$.
Assuming $k>1$, this leads to a parallel edge unless $U_l$ has more than 3
sides.
\end{proof}

Recall that $\M^*$ is a square matrix. We next prove that its permanent
is non-zero. Actually, we show something slightly stronger.

\begin{lemma}\label{l:altPM}
Let $\M'$ denote any matrix obtained from $\M^*$ by setting one of the
positive entries to zero. Then $\per(\M')>0$.
\end{lemma}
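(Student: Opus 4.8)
The statement asserts that if we zero out one of the three $1$'s in some column of $\M^*$ (equivalently, we drop one vertex, say $s$, from the white triangle $(r,c,s)\in W$, $T\ne T_\Delta$), then the resulting square $(0,1)$-matrix $\M'$ still has a positive permanent. Since the permanent counts systems of distinct representatives, $\per(\M')>0$ is equivalent to the existence of a perfect matching in the bipartite graph $H$ whose parts are the $n$ columns (white triangles other than $T_\Delta$) and the $n$ rows (vertices of $\V$ other than $r_0,c_0,s_0$), with a column-triangle $T'$ adjacent to each of its (at most three, exactly two in the modified column) vertices. So the whole proof reduces to a combinatorial matching argument, and the natural tool is Hall's theorem applied on the \emph{smaller} side, namely the triangles.

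The plan is to verify Hall's condition for the set of triangles: for any subset $\s$ of the column-triangles, the set $N(\s)$ of vertices incident (in the modified sense) to at least one triangle of $\s$ satisfies $|N(\s)|\ge|\s|$. First I would reduce to the graph $G$ consisting of the triangles in $\s$ together with all their edges, \emph{except} that in the special triangle $T$ we delete the edge-pair incident to $s$ — but it is cleaner to argue first without the deletion and then account for it. Consider the plane graph formed by the union of the white triangles in $\s$; the white triangles of a genus-$0$ bitrade are internally disjoint faces of $\G$, so each edge of this subgraph lies in at most one triangle of $\s$. Pass to the subgraph $G$ where every edge lies in \emph{exactly} one $T_i\in\s$ (discard edges not used; this only shrinks the vertex set), so $\lref{l:VinT}$ applies with $k=|\s|$: $G$ has at least $k+2$ vertices. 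These $k+2$ vertices are exactly $N(\s)$ computed \emph{without} the deletion, hence $|N(\s)|\ge k+2$, comfortably more than $k$; after deleting at most one vertex ($s$, and only when $T\in\s$) we still have $|N(\s)|\ge k+1\ge k$, and we must further check we have not dropped $r_0,c_0,s_0$ — but those rows were removed from $\M^*$ already, so they simply are not available as representatives. The subtlety is that $N(\s)$ in $H$ must avoid $r_0,c_0,s_0$: if some $T_i\in\s$ is incident to one of these vertices, that incidence does not help. This is where $\lref{l:VinT}$'s second clause earns its keep.

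The hard part — and the reason $\lref{l:VinT}$ is stated with its refined conclusion — is the boundary case where $\s$ contains a triangle $T_i$ incident to $r_0$, $c_0$, or $s_0$. Here I would argue: if $G$ has strictly more than $k+2$ vertices, we have slack of at least $2$, absorbing the loss of at most one deleted vertex $s$ and, since at most\ldots actually we need to be careful how many of $r_0,c_0,s_0$ can appear in $N(\s)$. The clean route: work instead with $\A^\#$, i.e.\ use $\cref{c:allwhite}$ to move $T_\Delta$ so that it is one of the triangles \emph{not} in $\s$ whenever possible, or so that the special vertices are the ones that get doubly covered. When $\s$ omits $T_\Delta$ and $|N(\s)|=k+2$ exactly, $\lref{l:VinT}$ forces every vertex of $G$, in particular each of $r_0,c_0,s_0$ that lies in $G$, to be incident to at least two triangles of $\s$; a short counting argument then shows that removing these special vertices from consideration removes at most\ldots — and this is precisely the combinatorial core I expect to fight with: bounding how many of $\{r_0,c_0,s_0\}$ can fall inside $N(\s)$ and reconciling that with the ``doubly covered'' conclusion so that Hall's inequality $|N(\s)\setminus\{r_0,c_0,s_0,s\}|\ge|\s|$ survives. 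I would handle it by case analysis on how many of $T_\Delta$'s vertices appear in $G$, using that $T_\Delta\notin\s$ means no edge of $T_\Delta$ is an edge of $G$, so each of $r_0,c_0,s_0$ in $G$ is an isolated-from-$T_\Delta$ vertex that $\lref{l:VinT}$ says must be shared — pushing the count through. Once Hall's condition is established for all $\s$, a perfect matching exists, hence $\per(\M')\ge1>0$.
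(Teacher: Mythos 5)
Your framework (Hall's condition on the triangle side of the bipartite graph of $\M'$) is equivalent to the paper's argument, which runs the same matching criterion in contrapositive form via the Frobenius--K\"onig theorem. But your proposal stalls exactly where you say it does, and the unresolved case is the whole content of the lemma. Applying \lref{l:VinT} to $\s$ alone gives only $|\s|+2$ vertices in the union of those triangles, and from this you must still discard up to three vertices (whichever of $r_0,c_0,s_0$ happen to occur) plus possibly the one vertex whose incidence was zeroed out: a worst-case deficit of four against a surplus of two. The equality clause of \lref{l:VinT} applied to $\s$ alone does not rescue this, since it only bites when the vertex count is exactly $|\s|+2$, while the deficit persists when the count is $|\s|+3$ or $|\s|+4$. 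Your fallback of invoking \cref{c:allwhite} to relocate $T_\Delta$ is also not available here: that corollary is a statement about the groups $\A^*$ and $\A^\#$, not about permanents of the associated matrices, and the fact that $\per(\M^*)$ is independent of the choice of $T_\Delta$ is only established later (via \tref{t:detper} and \tref{t:sameord}), which depend on the present lemma --- so using it now would be circular; moreover changing $T_\Delta$ changes which rows and column are deleted and which entry of $\M'$ was zeroed, so it does not preserve the object you are analysing.

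The missing idea in the paper's proof is simple but decisive: apply \lref{l:VinT} to the family $T_\Delta,T_1,\dots,T_k$ (where $T_1,\dots,T_k$ are the triangles of $\s$), i.e.\ adjoin the special triangle to your set before counting. White faces of $\G$ are pairwise edge-disjoint, so the hypothesis of \lref{l:VinT} holds, and with $k+1$ triangles you get at least $k+3$ vertices, hence at least $k$ vertices \emph{outside} $T_\Delta$ --- the three deleted rows $r_0,c_0,s_0$ are absorbed automatically because they are precisely the vertices of $T_\Delta$. This already gives $|N(\s)|\ge k$ in $\M^*$; to survive the single zeroed entry, use the equality clause for this enlarged family: if there are only $k$ vertices outside $T_\Delta$, every vertex lies in at least two triangles of the family, and a vertex not in $T_\Delta$ therefore lies in at least two of $T_1,\dots,T_k$, so its row keeps a positive entry in the columns of $\s$ even after one incidence is deleted. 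That closes Hall's condition (equivalently, contradicts the Frobenius--K\"onig zero submatrix) in all cases, which is exactly the step your case analysis was meant to supply but does not.
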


\begin{proof} 
Suppose by way of contradiction that $\per(\M')=0$. Since $\M'$ is a
non-negative matrix, the Frobenius-K\"onig Theorem (see, for example,
31.4.1 in \cite{WPer}) tells us that $\M'$ contains a $(n-k+1)\times k$
submatrix of zeroes for some $1\le k\le n$.  Let the column indices of
this submatrix be denoted $\Gamma$ and suppose these columns correspond to
the triangles $T_1,T_2,\dots,T_k$ in $\G$. As usual, let $T_\Delta$ denote the
special triangle.

Applying \lref{l:VinT} to $T_\Delta,T_1,\dots,T_k$ in
the subgraph of $\G$ induced by the edges of these triangles, we find that
$T_1,T_2,\dots,T_k$ between them use at least $k$ vertices that are
distinct from the 3 vertices in $T_\Delta$.  It follows that the columns
$\Gamma$ do not have more than $n-k$ zero rows in $\M^*$. Moreover,
the condition for equality in \lref{l:VinT} tells us there can only be
$n-k$ zero rows if each of the other rows has at least two positive
entries. Hence $\M'$ cannot contain $n-k+1$ zero rows in the columns
$\Gamma$. This contradicts the choice of $\Gamma$ and proves the
Lemma.
\end{proof}

\begin{corollary}\label{c:per2}
$\per(\M^*)\ge2$.
\end{corollary}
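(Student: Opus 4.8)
The plan is to derive this directly from Lemma~\ref{l:altPM} by a short pigeonhole-style argument. First I would observe that $\M^*$ is a square $n\times n$ matrix whose entries are nonnegative integers and whose columns each contain at most three positive entries (a column indexed by $(r,c,s)\in W$ picks up 1's only in the rows $r,c,s$, and at most one of those rows may have been deleted when forming $\M^*$). Since Lemma~\ref{l:altPM} applied with no alteration gives $\per(\M^*)=\per(\M^*)>0$, and the permanent of a nonnegative integer matrix is a nonnegative integer, we already know $\per(\M^*)\ge 1$.

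The key step is to rule out $\per(\M^*)=1$. Suppose for contradiction that $\per(\M^*)=1$. Expanding the permanent over the rows, $\per(\M^*)=\sum_{j}m^*_{1j}\,\per(\M^*_{1j})$, where $\M^*_{1j}$ is the minor obtained by deleting row $1$ and column $j$, and the sum runs over the columns $j$ with $m^*_{1j}=1$. Each such minor has nonnegative integer permanent, and the sum equals $1$, so exactly one term contributes: there is a unique column $j_0$ with $m^*_{1,j_0}=1$ and $\per(\M^*_{1,j_0})=1$, while every other positive entry $m^*_{1j}=1$ has $\per(\M^*_{1j})=0$. Since each nonempty row of a bitrade contains at least two symbols, row $1$ of $\M^*$ has at least two positive entries (again noting that deletion of the special rows removes at most a bounded amount, but in fact row $1$ here is indexed by a vertex not in $T_\Delta$, so it retains all the 1's it had in $\M$, and every used row/column/symbol of $W$ lies in at least two triangles of $W$). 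Pick a positive entry $m^*_{1,j_1}=1$ with $j_1\ne j_0$. Then $\per(\M^*_{1,j_1})=0$. But $\M^*_{1,j_1}$ is exactly the matrix obtained from $\M^*$ by deleting column $j_1$ and row $1$; equivalently, one checks that a matrix with a zero permanent as a minor of $\M^*$ forces, via the Frobenius--K\"onig argument already run in the proof of Lemma~\ref{l:altPM}, the same kind of oversized zero submatrix inside $\M^*$ itself — which is the contradiction.

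Cleaner still, and the route I would actually write: note that setting the entry $m^*_{1,j_1}$ of $\M^*$ to zero yields a matrix $\M'$ as in Lemma~\ref{l:altPM}, so $\per(\M')>0$, i.e.\ $\per(\M')\ge 1$. Now $\per(\M^*)=\per(\M')+m^*_{1,j_1}\cdot\per(\M^*_{1,j_1})$, because in the permanent expansion the terms that do not use position $(1,j_1)$ sum to $\per(\M')$ and the terms that do use it sum to $m^*_{1,j_1}\per(\M^*_{1,j_1})=\per(\M^*_{1,j_1})$. Since row $1$ has another positive entry $j_0\ne j_1$, the minor $\M^*_{1,j_1}$ has at least one positive entry in (the image of) row $1$'s neighbours and, more to the point, its permanent is a nonnegative integer; I need it to be at least $1$. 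For that I apply the same Frobenius--K\"onig reasoning used in Lemma~\ref{l:altPM}: $\M^*_{1,j_1}$ is an $(n-1)\times(n-1)$ submatrix of the incidence structure of $\G$ in which the deleted column $T_{j_1}$ and deleted vertex-row together behave like a truncated version of the $T_\Delta$ setup, and Lemma~\ref{l:VinT} again forbids the requisite all-zero block. Hence $\per(\M^*)\ge \per(\M')+\per(\M^*_{1,j_1})\ge 1+1=2$.

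The main obstacle is the bookkeeping in that last paragraph: making precise that deleting an additional row and column from $\M^*$ still leaves a configuration to which Lemma~\ref{l:VinT} applies, so that its minor permanent is also positive. The delicate point is that removing the row indexed by vertex $v_1$ is like enlarging the ``special'' deleted vertex set from $\{r_0,c_0,s_0\}$ to $\{r_0,c_0,s_0,v_1\}$, and one must confirm the vertex count $k+2$ versus $k+3$ still goes through for the subgraph of $\G$ spanned by the chosen triangles together with $T_\Delta$ and the triangle $T_{j_1}$. I expect this to be routine given Lemma~\ref{l:VinT}, but it is the step that needs care rather than the permanent arithmetic, which is mechanical.
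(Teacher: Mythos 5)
There is a genuine gap, and it sits exactly at the step you flag as ``needing care'': your plan requires $\per(\M^*_{1,j_1})\ge 1$ for an \emph{arbitrarily chosen} positive entry $(1,j_1)$ of row $1$, and that claim is false in general. A positive entry of $\M^*$ need not lie on any positive diagonal; the paper points this out immediately after the corollary (``for any given edge \dots there is a perfect matching that does not use that edge. The previous sentence is untrue if the word `not' is omitted, as can be demonstrated by taking $W$ to be any trade of size 7''). So no amount of re-running the Frobenius--K\"onig/\lref{l:VinT} argument on the doubly-deleted matrix can establish what you want for an arbitrary entry, and the bookkeeping you anticipate in the last paragraph cannot be made to work as stated. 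The same defect undermines your second paragraph: a vanishing minor $\per(\M^*_{1,j})=0$ does not produce an oversized zero block in $\M^*$ itself, so there is no contradiction with \lref{l:altPM} there.

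The repair is short and is essentially the intended reading of the corollary: since $\per(\M^*)\ge\per(\M')>0$ by \lref{l:altPM}, the matrix $\M^*$ has at least one positive diagonal $\delta$. Now choose the entry to be zeroed \emph{on} $\delta$, say $(i,j)\in\delta$. Then in the decomposition $\per(\M^*)=\per(\M')+\per(\M^*_{ij})$ the second term is at least $1$ trivially (the restriction of $\delta$ is a positive diagonal of the minor -- no Frobenius--K\"onig argument needed), and the first term is at least $1$ by \lref{l:altPM} applied to that entry. Equivalently: $\delta$ and the positive diagonal of $\M'$ guaranteed by \lref{l:altPM} avoid and use $(i,j)$ respectively, so they are distinct, giving $\per(\M^*)\ge2$. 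In short, your expansion identity is fine; the missing idea is that the distinguished entry must be selected on an existing positive diagonal rather than arbitrarily in row $1$.
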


The lower bound in \cref{c:per2} is achieved when
$W$ is an intercalate.

An alternate way to state \lref{l:altPM} is that for any given edge in
the bipartite graph corresponding to $\M^*$ there is a perfect
matching that does not use that edge. The previous sentence is untrue
if the word ``not'' is omitted, as can be demonstrated by taking $W$
to be any trade of size 7.

There is a wealth of interesting theory surrounding matrices whose
permanent and determinant agree up to sign. The interested reader
is encouraged to start by consulting \cite{McC}. We use one small
part of that theory to prove our next result.

\begin{theorem}\label{t:detper}
$|\det(\M^*)|=\per(\M^*)$.
\end{theorem}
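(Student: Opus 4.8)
**The plan is to show that every nonzero term in the permanent expansion of $\M^*$ carries the same sign in the determinant expansion**, so that $|\det(\M^*)|$ equals $\per(\M^*)$. Since $\M^*$ is a $(0,1)$-matrix, a term of the permanent corresponds to a perfect matching in the bipartite graph $H$ whose parts are the rows (indexed by $\V\setminus\{r_0,c_0,s_0\}$) and the columns (indexed by $W\setminus\{T_\Delta\}$); equivalently, to a permutation $\sigma$ with $(\M^*)_{i,\sigma(i)}=1$ for all $i$. The contribution of $\sigma$ to $\det(\M^*)$ is $\operatorname{sgn}(\sigma)$, so I must prove that $\operatorname{sgn}(\sigma)$ is the same for every such $\sigma$. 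The standard way to do this: fix one perfect matching $\sigma_0$ (which exists by \cref{c:per2}); any other perfect matching $\sigma$ differs from $\sigma_0$ by a product of alternating cycles in the symmetric difference of the two matchings, so $\operatorname{sgn}(\sigma)=\operatorname{sgn}(\sigma_0)$ if and only if every such alternating cycle is even (i.e.\ uses an even number of columns). Thus the crux is: every cycle in $H$ has length $\equiv 0 \pmod 4$, equivalently the bipartite graph $H$ has no cycle using an odd number of column-vertices.

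Here is where the topology enters, and this is \textbf{the main obstacle}. The bipartite graph $H$ is essentially the (truncated) vertex--face incidence graph of the planar triangulation $\G$, restricted to white faces. A cycle in $H$ alternates between $\V$-vertices and white triangles of $\G$, each consecutive triangle-vertex pair being incident; such a closed walk traces out a closed curve in the plane passing through a sequence of white faces and the vertices shared between consecutive ones. Because $\G$ is a triangulation of the sphere with faces properly $2$-coloured black/white, a white-face walk that returns to its start must enclose a disc, and I expect a parity/Euler-characteristic argument inside that disc — counting black versus white faces, or counting edges versus vertices of the enclosed subcomplex using that each interior vertex has even degree (the triangulation is Eulerian, cf.\ \tref{pla}) — to force the number of white faces visited to be even. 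Concretely, I would take the region $D$ of the sphere bounded by the curve, apply Euler's formula to the induced subcomplex, use that every vertex strictly inside $D$ has even degree and the $2$-colouring alternates around each vertex, and extract that the white faces along the boundary walk number an even quantity. The orientation coherence established in \sref{ss:genus} (each face coherently oriented, giving an orientation of the surface) may also be invoked to assign consistent signs.

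With the parity claim in hand the theorem follows immediately: all perfect matchings of $H$ have the same sign, hence $\det(\M^*)=\pm\per(\M^*)$, and taking absolute values gives $|\det(\M^*)|=\per(\M^*)$ since $\per(\M^*)\ge 2>0$ by \cref{c:per2}. I would organise the write-up as: (i) reduce to the sign-constancy statement via the symmetric-difference-of-matchings argument; (ii) translate ``all alternating cycles even'' into a statement about cycles in the vertex--white-face incidence graph of $\G$; (iii) prove that statement by a planarity/Euler argument on the disc bounded by such a cycle, using the proper $2$-colouring and the even-degree (Eulerian) property of $\G$. I anticipate step (iii) is the only real content; steps (i) and (ii) are routine linear algebra and bookkeeping. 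One subtlety to watch in (iii) is that the incidence graph $H$ may have the extra deleted row/column ($T_\Delta$ and $r_0,c_0,s_0$) interacting with a cycle near the outer face, but since $T_\Delta$'s column is absent from $H$ a cycle never passes through it, so the argument can be carried out in the sphere with $T_\Delta$ as the distinguished outer face without obstruction.
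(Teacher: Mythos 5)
There is a genuine gap, and it is the parity at the heart of your plan. Your reduction to ``all perfect matchings of $H$ have the same sign'' is fine, and is essentially the paper's route too (the paper fixes a positive diagonal and applies McCuaig's even-dicycle criterion). But the parity you then set out to prove is the wrong one. If two matchings of $H$ differ by an alternating cycle through $k$ column-vertices (length $2k$ in $H$), the corresponding permutations differ by a $k$-cycle, so their signs agree iff $(-1)^{k-1}=1$, i.e.\ iff $k$ is \emph{odd}, i.e.\ cycle length $\equiv 2\pmod 4$. Your stated crux --- that $H$ has no cycle using an odd number of column-vertices, all cycles having length $\equiv 0\pmod 4$ --- is precisely the condition under which switching along a single alternating cycle \emph{flips} the sign, and it is false already in the smallest case: for an intercalate, $\M^*$ is the $3\times 3$ matrix with $0$ on the diagonal and $1$ elsewhere, $H$ is a single $6$-cycle through three white faces, and $|\det(\M^*)|=\per(\M^*)=2$ exactly because the two matchings differ by a $3$-cycle, an even permutation. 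So step (iii) as planned is an attempt to prove a false statement, and proving the statement you actually need requires the opposite conclusion (an odd number of white faces on the cycle).

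There is a second, related gap: even after correcting the parity, a pure Euler/even-degree argument on the disc bounded by the cycle does not suffice, because the required count uses the matching itself, not just planarity and the $2$-colouring. The paper's proof fixes a positive diagonal $\delta$, builds the digraph $D$ on the rows, takes a dicycle $\Gamma$ of length $l$ with $T_\Delta$ redrawn outside, and applies Euler's formula to the enclosed subcomplex $G'$; the (vertex,\,white face) pairing given by $\delta$ is what shows that the number of vertices strictly inside $\Gamma$ equals the number of white triangles strictly inside that miss $\Gamma$, and the observation that a dicycle can use at most one edge of any white triangle controls the boundary edges. These yield $l=2(f_B-f_W)+1$, which is odd, as needed. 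Without an ingredient tying interior vertices to interior white faces via the matching, the Euler count alone does not pin down the parity of the number of white faces met by the cycle --- and, as noted above, it is the opposite parity from the one your sketch aims for.
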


\begin{proof}
From \cref{c:per2} there exists at least one positive diagonal
$\delta$ of $\M^*$.  For a fixed choice of $\delta$ we construct a digraph
$D$ with vertices corresponding to the rows of $\M^*$.  For each
column $\beta$ of $\M^*$, let $\alpha$ be the row of $\M^*$ such that
cell $(\alpha,\beta)$ is included in $\delta$.  For each row
$\alpha'\neq\alpha$ that contains a non-zero entry in column $\beta$, we
add a directed edge $(\alpha,\alpha')$ to $D$.  

We will show that $D$ contains no dicycles of even length. 
By Theorem 10 of \cite{McC}, this gives the required result. (Note that
\cite{McC} uses a digraph isomorphic to our $D$ and a matrix that is
equivalent to our $\M^*$ up to a permutation of the rows. Such permutations
may change the sign of the determinant, but preserve its magnitude.)

Suppose that there exists a dicycle $\Gamma$ in $D$ of length $l\geq
3$.  If we ignore orientation then the vertices and edges of $D$ are a
subset of the vertices and edges, respectively, of $\G$.  Thus 
$\G$ induces a planar embedding of $D$.  By
redrawing $\G$ if necessary, we may assume that the
special triangle $T_\Delta$ lies outside $\Gamma$. Note that by
construction $\Gamma$ cannot contain any vertex from $T_\Delta$.  We let
$G'$ be the subgraph of $\G$ which includes $\Gamma$ and any internal
faces, vertices and edges.  Thus $G'$ has one external face with $l$
edges and all other faces are triangles. Suppose $G'$ has $v$ vertices,
$e$ edges and $f$ faces. Further, suppose that $y$ of the edges 
of $\Gamma$ are contained in white triangles in $G'$. These $y$ edges
belong to $y$ different white triangles in $G'$, because in any white
triangle the oriented edges share the same source, so a dicycle
cannot contain more than one of them.

Next, define $f_B$ to be the number of black triangles within $G'$ and
let $f_W$ be the number of white triangles within $G'$ that do not
intersect $\Gamma$.  Then, the (vertex,\,face) pairing given by $\delta$
implies that the number of vertices of $G'$ not on $\Gamma$ is equal
to $f_W$.  Thus, $v=l+f_W$.

Now, every edge of $G'$, other than the $y$ edges on $\Gamma$ which
border white triangles, is adjacent to a unique black
triangle. Therefore, $e = 3f_B+y$ and $f = 1 +f_B+f_W+y$.  Since $G'$
is planar, $f-e+v=2$ by Euler's formula. Combining these equations
yields $l=2(f_B-f_W)+1$, so in particular $l$ is odd.
\end{proof}

Combining \tref{t:detper} and \cref{c:per2} we see that
$|\det(\M^*)|\ge2$.  The theory of presentation matrices (see, for
example, \cite{Sims}) tells us that $|\det(\M^*)|$ is the order of $\A^*$
(and that $\det(\M^*)$ would be $0$ if $\A^*$ was infinite). Hence, we
have the vital observation that $\A^*$ is non-trivial and finite:

\begin{corollary}\label{c:Afinite}
The abelian group $\A^*$ satisfies $2\le|\A^*|<\frac{\sqrt2}{3}6^{n/3}$.
\end{corollary}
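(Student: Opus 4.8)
The plan is to assemble \cref{c:Afinite} from pieces already in hand. The lower bound $|\A^*|\ge 2$ is essentially immediate: by \tref{t:detper} we have $|\det(\M^*)| = \per(\M^*)$, and by \cref{c:per2} we have $\per(\M^*)\ge 2$; since the theory of presentation matrices tells us that $|\A^*| = |\det(\M^*)|$ (and that this determinant is nonzero precisely because $\A^*$ is finite, which we now know since it has a square presentation matrix of nonzero determinant), we get $|\A^*| = |\det(\M^*)| = \per(\M^*)\ge 2$. So the whole content of the corollary beyond what is already displayed in the paragraph preceding it is the \emph{upper} bound $|\A^*| < \tfrac{\sqrt2}{3}6^{n/3}$.

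For the upper bound I would again use $|\A^*| = \per(\M^*)$ and bound the permanent of $\M^*$ directly. Recall $\M^*$ is an $n\times n$ $(0,1)$-matrix arising from deleting one column and three rows from $\M$, where $\M$ has exactly three $1$'s in every column. Deleting rows can only remove $1$'s, so every column of $\M^*$ has \emph{at most} three $1$'s; equivalently every column sum is at most $3$. The natural tool is the Br\`egman--Minc bound (or, more simply here, a column-based permanent bound): for a $(0,1)$-matrix the permanent is at most $\prod_j (k_j!)^{1/k_j}$ where $k_j$ is the number of ones in column $j$ (this is Br\`egman's theorem applied to the transpose). With every $k_j\le 3$ and $(3!)^{1/3} = 6^{1/3}$, and noting $x\mapsto (x!)^{1/x}$ is increasing over $x\in\{1,2,3\}$ so the worst case is all columns full, we get $\per(\M^*)\le 6^{n/3}$. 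To sharpen $6^{n/3}$ down to $\tfrac{\sqrt2}{3}6^{n/3}$ I would exploit the fact that $\M^*$ cannot have \emph{all} columns with three $1$'s: the three deleted rows are $r_0,c_0,s_0$, which between them meet several white triangles other than $T_\Delta$ (indeed $r_0$, $c_0$ and $s_0$ each lie on at least one further white triangle since they have degree $\ge 2$ in $\G$, the trade being nontrivial), and each such incidence kills a $1$ in the corresponding column. Tracking how many columns drop from $3$ ones to $2$ ones (at least three such columns, after checking these can be taken distinct — they are, since two distinct white triangles sharing two of the vertices $r_0,c_0,s_0$ would force a repeated edge) replaces three factors of $6^{1/3}$ by three factors of $2^{1/2}$, giving $\per(\M^*)\le 6^{(n-3)/3}\cdot 2^{3/2} = 6^{n/3}\cdot 6^{-1}\cdot 2^{3/2} = 6^{n/3}\cdot \tfrac{2\sqrt2}{6} = \tfrac{\sqrt2}{3}6^{n/3}$, and the inequality is strict because equality in Br\`egman's bound requires a very rigid block structure that a connected triangulation of size $>1$ cannot have (or, more cheaply, because at least one further column must drop below its maximal row count once one traces the local structure around $T_\Delta$).

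The main obstacle I anticipate is the bookkeeping in the last step: getting the constant exactly $\tfrac{\sqrt2}{3}$ rather than something weaker requires identifying precisely three columns that lose a one, and arguing they are distinct. The combinatorial input for this is exactly the kind of ``no repeated edge'' argument used in \lref{l:VinT}: if two distinct white triangles both contained two of $\{r_0,c_0,s_0\}$ they would share an edge of $\G$, contradicting that $\G$ is a simple triangulation in which each edge lies on one white and one black face. Once distinctness is secured, the rest is the routine monotonicity of $(x!)^{1/x}$ and arithmetic. An alternative, if one wants to avoid invoking Br\`egman, is to bound $\per(\M^*)$ crudely by expansion, but that seems to give a weaker constant, so I would stick with the Br\`egman route and spend the care on the three distinguished columns.
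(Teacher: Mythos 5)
Your proposal is correct and takes essentially the same route as the paper: the lower bound comes from $|\A^*|=|\det(\M^*)|=\per(\M^*)\ge 2$, and the upper bound from observing that at least three columns of $\M^*$ contain only two 1's (the rest three) and applying the Minc--Br\`egman permanent bound, which yields $(2!)^{3/2}\,6^{(n-3)/3}=\frac{\sqrt2}{3}6^{n/3}$. Your additional care in checking that the three deficient columns are distinct, and your remark on strictness, merely spell out details the paper's one-line proof leaves implicit.
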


\begin{proof}
The matrix $\M$ has three 1's in every column. So of the $n$ columns
in $\M^*$, there are at least 3 with only two 1's and the remaining
columns contain three 1's.  Applying the Minc-Br\`egman Theorem (see,
for example, 31.3.5 in \cite{WPer}) gives the upper bound.
\end{proof}

Combined with \lref{l:AAstar} we also have:

\begin{corollary}
$\A$ has free rank exactly two.
\end{corollary}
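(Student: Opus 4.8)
The plan is to deduce this corollary immediately from the two results it is sandwiched between, namely \lref{l:AAstar} and \cref{c:Afinite}. Recall that \lref{l:AAstar} gives the isomorphism $\A\iso\A^*\oplus\Z\oplus\Z$, and \cref{c:Afinite} establishes that $\A^*$ is a finite abelian group (indeed $2\le|\A^*|<\infty$). The free rank of a finitely generated abelian group is additive over direct sums, and a finite group has free rank $0$, so the free rank of $\A$ equals $0+1+1=2$.

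In slightly more detail, the only step requiring care is knowing that $\A$ is finitely generated at all, so that ``free rank'' is meaningful in the usual sense; but this is clear since $\A$ is generated by the finite set $\V=R\cup C\cup S$ of indeterminates used in its presentation. Then $\A^*$, being a quotient of $\A$ by the addition of finitely many further relations, is also finitely generated, and by \cref{c:Afinite} it is in fact finite, hence torsion. Writing the structure theorem decomposition of $\A$ as (free part) $\oplus$ (torsion part), the isomorphism of \lref{l:AAstar} forces the free part to be $\Z^2$ and the torsion part to be $\A^*$ (which also justifies, \emph{a posteriori}, the notation $\A^*$ for the torsion subgroup, as used in the abstract).

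There is essentially no obstacle here: the corollary is a packaging statement that records a consequence of the preceding lemma and corollary, and the entire argument is the one-line observation that free rank is additive and vanishes on finite groups. The only thing to be alert to is not to conflate this with the sharper claim (mentioned in the abstract and proved later) that the torsion subgroups $\A^*$ and $\B^*$ have equal order or are isomorphic — that is a separate and harder matter, whereas the free-rank statement follows purely formally from \lref{l:AAstar} and \cref{c:Afinite}.
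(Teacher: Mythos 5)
Your argument is exactly the paper's: the corollary is stated immediately after \cref{c:Afinite} with the remark that it follows by combining that result with \lref{l:AAstar}, i.e.\ $\A\iso\A^*\oplus\Z\oplus\Z$ with $\A^*$ finite, so the free rank is $2$. Your proposal is correct and takes essentially the same route, just spelling out the finite-generation and additivity-of-rank details that the paper leaves implicit.
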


We also now know that $\M^*$ is invertible. Our next technical lemma
tells us, in effect, that $(\M^*)^{-1}$ does not have any column which
consists of integers.

\def\vec{\tilde}

\begin{lemma}\label{l:cramer}
Let $\vec x=(x_1,x_2,\dots,x_n)^T$ and $\vec e_i=(0,\dots,0,1,0,\dots,0)^T$
where the sole $1$ is in the $i$-th coordinate.
The matrix equation $\M^*\,\vec x=\vec e_i$ has no solution
in which every $x_j$ is an integer.
\end{lemma}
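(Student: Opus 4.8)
The plan is to argue by contradiction and interpret an integer solution $\tilde x$ to $\M^*\tilde x = \tilde e_i$ as an element of the group $\A^*$. Recall that the rows of $\M^*$ are indexed by $\V\setminus\{r_0,c_0,s_0\}$ and the columns by $W\setminus\{T_\Delta\}$. A column vector $\tilde x$ indexed by these rows is nothing but an assignment of integers to the generators of $\A^*$ (with $r_0,c_0,s_0$ assigned $0$), and the condition $\M^*\tilde x = \tilde 0$ would say exactly that this assignment respects all the white-triangle relations, i.e. gives a homomorphism $\A^*\to\Z$. Since $\A^*$ is finite by \cref{c:Afinite}, the only such homomorphism is the zero map. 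So if $\M^*\tilde x=\tilde e_i$ had an integer solution, then looking at it mod the relations: applying the $(r,c,s)$-relation row by row, the left side $\M^*\tilde x$ records, in the entry for triangle $(r,c,s)$, the sum of the three generator-values, which in $\A^*$ must be $0$ — yet $\tilde e_i$ is nonzero in one coordinate. The first step, then, is to make precise that $\M^*$ is the presentation matrix of $\A^*$ and that $\M^*\tilde x$, read off componentwise, is a list of the left-hand-sides $x_r+x_c+x_s$ for the non-special white triangles.

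The second step handles the special triangle. An integer vector $\tilde x$ on the $n$ non-special rows extends uniquely to all of $\V$ by setting $x_{r_0}=x_{c_0}=x_{s_0}=0$. The full system $\M\,\tilde x'$ (where $\tilde x'$ is the extension) then has, in the $T_\Delta$ row, the value $x_{r_0}+x_{c_0}+x_{s_0}=0$, and in every other row the value $(\M^*\tilde x)_{(r,c,s)}$. So $\M^*\tilde x=\tilde e_i$ says: every white-triangle relation is satisfied except the one indexed by the $i$-th triangle $T_i=(r_i,c_i,s_i)$, for which $x_{r_i}+x_{c_i}+x_{s_i}=1$. Equivalently, the assignment $v\mapsto x_v$ (integers, with $r_0,c_0,s_0\mapsto 0$) together with the map sending the indeterminate $T_\Delta$-slack to $0$ defines a function on $\V$ that would be a homomorphism $\A^*\to\Z$ were it not for the single defect at $T_i$.

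To extract the contradiction, the cleanest route is: build an auxiliary group. Let $\A^{**}$ be the presentation obtained from $\A^*$ by deleting the relation for $T_i$ instead of (or in addition to, appropriately) keeping it — i.e. choose $T_i$ itself as a ``special'' triangle. By \cref{c:allwhite}, the choice of special white triangle is immaterial up to isomorphism, and more importantly the matrix with the $T_i$-column deleted and, say, the rows for the vertices of $T_i$ deleted is again an invertible presentation matrix of a finite group. The vector $\tilde x$, restricted and reinterpreted, would give a nonzero homomorphism from this finite group to $\Z$ (the $i$-th coordinate being where the value $1$ appears), which is impossible. Alternatively, and perhaps more directly: summing the components of $\M^*\tilde x$ over all rows with appropriate multiplicities, or applying Cramer's rule — $x_i = \det(\M^*_i)/\det(\M^*)$ where $\M^*_i$ replaces a column by $\tilde e_i$ — we would need $\det(\M^*)\mid \det(\M^*_i)$ for every such minor, i.e. every entry of $(\M^*)^{-1}$ in column $i$ is an integer; combined with the permanent-determinant identity of \tref{t:detper} and the strict bound $|\det(\M^*)|\ge 2$ of \cref{c:Afinite}, one derives that $\A^*$ would have a $\Z$ summand, contradicting finiteness.

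I expect the main obstacle to be the bookkeeping of the ``defect'' argument: one must be careful that an integer solution really does furnish a genuine homomorphism $\A^{**}\to\Z$ that is nonzero, which requires checking that the deleted relation and the deleted rows line up correctly with the coordinate $i$, and that no hidden torsion relation (coming from a row we kept) forces $x_i=0$ after all. The finiteness of $\A^*$ (hence of the relevant contracted group) is the lever that makes the contradiction work, so the argument hinges on invoking \cref{c:Afinite} — but applied to a presentation matrix built around $T_i$ rather than $T_\Delta$, whose invertibility itself follows by rerunning \tref{t:detper} with $T_i$ in the role of the special triangle. Verifying that this rerun is legitimate (i.e. that \lref{l:VinT} and the parity argument never used anything special about $T_\Delta$ beyond it being one fixed white face) is the one place where care is genuinely needed.
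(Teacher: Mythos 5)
There is a genuine gap, and it starts with a misreading of the matrix. In $\M^*$ the \emph{rows} are indexed by $\V\setminus\{r_0,c_0,s_0\}$ and the \emph{columns} by the non-special white triangles, so in the equation $\M^*\,\tilde x=\tilde e_i$ the unknown $\tilde x$ assigns an integer coefficient to each \emph{relation} (white triangle), and the equation asserts that the generator $v_i$ is an integer combination of the relations, i.e.\ that $\bar v_i=0$ in $\A^*$ (this is exactly how the lemma is used in \tref{t:embeds}). Your reading --- $\tilde x$ indexed by vertices with $\M^*\tilde x$ listing the sums $x_r+x_c+x_s$ --- is the statement for the transpose of $\M^*$, so the whole ``homomorphism with a defect at $T_i$'' framework addresses a different equation. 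Moreover, even for that transposed statement your auxiliary-group route does not close: the hypothetical assignment vanishes on the vertices of $T_\Delta$, not on those of $T_i$, so it is \emph{not} a homomorphism out of the (finite) group obtained by rerunning the construction with $T_i$ as the special triangle; and the group it genuinely factors through --- delete the column of $T_i$ but the rows of $T_\Delta$'s vertices --- can be infinite (for an intercalate this mixed matrix has a zero column), so ``finiteness forces the map to $\Z$ to vanish'' is not available. You flagged this alignment issue yourself as the place needing care; it is in fact where the argument breaks.

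Your ``more direct'' Cramer's-rule alternative is the paper's actual route, but as sketched it omits the two facts that carry all the content. The paper expands $\det(\M^*)$ along row $i$ to get $1=\sum_{j\in\Omega}M_{ij}/\det(\M^*)$, then uses \tref{t:detper} (via $|\det(\M^*)|=\per(\M^*)$ and the cofactor/permanental expansions) to conclude every nonzero cofactor $M_{ij}$ has the same sign as $\det(\M^*)$, and \lref{l:altPM} to conclude at least two of them are nonzero; hence each nonzero $x_j=M_{ij}/\det(\M^*)$ lies strictly between $0$ and $1$ and cannot be an integer. Your substitute conclusion --- that integrality of the $i$-th column of $(\M^*)^{-1}$ would give $\A^*$ a $\Z$ summand, ``contradicting finiteness'' --- is a non sequitur: an integer solution only says $\bar v_i=0$ in the finite group $\A^*$, which finiteness plus $|\det(\M^*)|\ge 2$ cannot by themselves exclude (e.g.\ $\mathrm{diag}(1,2)$ presents $\Z_2$, has determinant $2$, and its first generator is trivial). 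In particular, without \lref{l:altPM} (i.e.\ without the planarity input of \lref{l:VinT}) nothing rules out a single cofactor equal to $\pm\det(\M^*)$ with all others zero, in which case $\tilde x$ would be a genuine $0$--$1$ integer solution. So the essential ideas of the paper's proof --- positivity of the nonzero terms via the permanent--determinant identity, and the ``at least two nonzero cofactors'' coming from \lref{l:altPM} --- are missing from the proposal.
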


\begin{proof}
Let $M_{ij}$ denote the cofactor of the $(i,j)$-th entry of $\M^*$.
From a cofactor expansion for $\det(\M^*)$ along
the $i$-th row we infer that
\begin{equation}\label{e:cofact}
1=\sum_{j\in\Omega}\frac{M_{ij}}{\det(\M^*)},
\end{equation}
where $\Omega$ is the set of columns in which a 1 appears in row $i$
of $\M^*$. By \lref{l:altPM} 
there are at least two $j\in\Omega$ for which $M_{ij}\neq0$. 
Also, by \tref{t:detper}, the non-zero terms in the sum in \eref{e:cofact} 
are all positive, so they cannot be integers.
The result now follows from Cramer's rule, since the unique solution to
$\M^*\,\vec x=\vec e_i$ is
$$x_j=\frac{M_{ij}}{\det(\M^*)}$$
for each $j$.
\end{proof}

Given \cref{c:Afinite}, our next result will complete the proof of
\tref{t:main}.

\begin{theorem}\label{t:embeds}
$W$ embeds in $\A^*$.
\end{theorem}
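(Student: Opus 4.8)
The plan is to produce an explicit embedding of $W$ into $\A^*$ by sending each row, column and symbol to its own image in $\A^*$ under the natural map $\V\to\A^*$. Concretely, for $(r,c,s)\in W$ we want $r+c+s=0$ in $\A^*$ by the defining relations, so the obvious candidate for the embedding is the assignment $r\mapsto r$, $c\mapsto c$, $s\mapsto s$ (viewed as elements of $\A^*$), with ``multiplication'' $r\star c := -s = r+c$. For this to be an embedding in the sense of \sref{s:embed} we must check two things: first, that $r\star c$ lands back on the correct symbol, i.e.\ that whenever $(r,c,s)\in W$ we have $r+c=-s$ in $\A^*$ — this is immediate from the presentation; and second, the genuinely nontrivial point, that the map $\V\to\A^*$ is \emph{injective} on each of $R$, $C$ and $S$ separately (and more precisely that it is injective on all of $\V$, or at least that distinct rows map to distinct elements, distinct columns to distinct elements, and distinct symbols to distinct elements). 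Injectivity is exactly what makes the copy of $W$ sit inside the Cayley-like table of $\A^*$ without collapsing.

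The heart of the argument is therefore to show that if $v,v'\in R$ (or both in $C$, or both in $S$) are distinct indeterminates, then $v\ne v'$ in $\A^*$. Suppose instead $v-v'=0$ in $\A^*$. By \cref{c:allwhite} the difference $v-v'$ is independent of the choice of special triangle $T_\Delta$, so we may assume $v'=r_0$ (if $v,v'\in R$), and then $v=0$ in $\A^*$ would say the indeterminate $v$ is in the subgroup generated by the relations together with $r_0=c_0=s_0=0$. I would translate this into a statement about the presentation matrix: $v-v'=0$ in $\A^*$ means the corresponding row-difference vector lies in the row space of $\M^*$ over $\Z$, equivalently there is an integer vector $\vec x$ with $(\M^*)^T\vec x$ equal to $\vec e_v-\vec e_{v'}$ (indices among the rows of $\M^*$), or, after reducing to the single-generator case via \cref{c:allwhite}, with $(\M^*)^T\vec x = \vec e_i$ for the row index $i$ corresponding to $v$. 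This is precisely the equation ruled out by \lref{l:cramer} — note $\M^*$ and its transpose have the same determinant up to sign and the same invertibility, and the relevant Cramer's-rule computation is symmetric in the required way.

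So the key steps, in order, are: (1) write down the candidate embedding $r\star c=r+c=-s$ and observe that it respects the table of $W$; (2) reduce injectivity to showing no indeterminate of $\V$ becomes $0$ in $\A^*$ when it is not one of $r_0,c_0,s_0$, using \cref{c:allwhite} to move the special triangle and to handle $R$, $C$, $S$ uniformly; (3) express ``$v=0$ in $\A^*$'' as the solvability of $\M^*\vec x=\vec e_i$ (or its transpose) in integers; (4) invoke \lref{l:cramer} to get a contradiction. Finally combine with \cref{c:Afinite}, which guarantees $\A^*$ is a genuine finite abelian group, to conclude that $W$ embeds in a finite abelian group, completing \tref{t:main}.

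The main obstacle I anticipate is bookkeeping in step (3): one must be careful that ``$v-v'=0$ in the group presented by $\M^*$'' corresponds correctly to an integer solution of a linear system governed by $\M^*$ (columns indexed by relations/white triangles, rows by indeterminates), and that \lref{l:cramer}, as stated for $\M^*\vec x=\vec e_i$, is being applied to the right matrix — transpose versus not. Since $\det(\M^*)=\pm\per(\M^*)$ and the cofactor argument in \lref{l:cramer} only uses positivity of the nonzero cofactors along a row, the transpose version holds verbatim, but this correspondence between the group-theoretic vanishing and the matrix equation is where the proof needs to be written with care. Everything else — that the assignment respects $W$, and that $\A^*$ is finite — is already in hand.
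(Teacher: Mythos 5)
Your proposal is correct and follows essentially the same route as the paper's proof: the same assignment (with the symbol generator negated), injectivity reduced via \cref{c:allwhite} to showing that no generator outside the special triangle vanishes in $\A^*$, and that vanishing ruled out by \lref{l:cramer}, with finiteness supplied by \cref{c:Afinite}. Your transpose worry is unnecessary: since the columns of $\M^*$ index the relations, the statement ``$\bar v=0$ in $\A^*$'' is precisely integer solvability of $\M^*\vec x=\vec e_i$, which is \lref{l:cramer} exactly as stated.
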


\begin{proof}
We need to distinguish between the elements of $\V$ when they are used
as coordinates for $W$ as contrasted with when they are considered as
generators for $\A^*$.  For any $v\in\V$ we will use $\bar v$
to denote the group element and $v$ without the bar to designate the
coordinate for $W$.

Consider the maps
\begin{align*}
I_1&:R\mapsto\A^*\hbox{ \ that satisfies \ }
     I_1(r_0)=0\hbox{ and }I_1(r_i)=\bar r_i\hbox{ otherwise},\\
I_2&:C\mapsto\A^*\hbox{ \ that satisfies \ }
     I_2(c_0)=0\hbox{ and }I_2(c_j)=\bar c_j\hbox{ otherwise},\\
I_3&:S\mapsto\A^*\hbox{ \ that satisfies \ }
     I_3(s_0)=0\hbox{ and }I_3(s_k)=-\bar s_k\hbox{ otherwise}.
\end{align*}
For any $(r_i,c_j,s_k)\in W$, the construction of $\A^*$ ensures
that $\bar r_i+\bar c_j+\bar s_k=0$ so that $I_1(r_i)+I_2(c_j)=I_3(s_k)$.
It follows that we have an embedding of $W$ in $\A^*$ provided we can
show that $I_1,I_2,I_3$ are injections.

We first prove that $\bar v\neq 0$ in $\A^*$ for any $v\in\V
\setminus\{r_0,c_0,s_0\}$. Suppose $\bar v=0$. Then $\bar v$ must be
an integer linear combination of the defining relations for
$\A^*$. However, \lref{l:cramer} shows this is not the case.

We next argue that $I_1(r)\neq I_1(r')$ for distinct $r,r'\in
R\setminus\{r_0\}$. Suppose $\bar r=\bar r'$. Consider the situation
if we had chosen, say, $(r',c,s)$ as the special white triangle rather
than $(r_0,c_0,s_0)$.  By \cref{c:allwhite} this would have produced
an isomorphic group in which $\bar r=\bar r'=0$, contrary to what we
proved above. It follows that $I_1$ is injective.

The proofs that $I_2$ and $I_3$ are injective are similar.
\end{proof}

Having shown that spherical trades can always be embedded in a finite
abelian group, we pause to consider some properties of the embedding
that we have constructed. By definition $\A$ is the ``most free''
abelian group that contains the structure of $W$ and is generated by
the elements in that structure. However, on the basis of \lref{l:000}
and \cref{c:torsion}, we feel it appropriate to concentrate on $\A^*$,
the torsion subgroup of $\A$.  We call the embedding of $W$ in $\A^*$
{\em canonical}.  Of course, $W$ can be embedded in any group that
contains $\A^*$ as a subgroup.  It is also plausible that $W$ could be
embedded in a group obtained by adding extra relations to $\A^*$, but
any such group is a homomorphic image of $\A^*$. In particular, $\A^*$
need not have the minimal order among groups in which $W$ embeds. We
will give a concrete example where the minimal and canonical
embeddings differ, in \egref{eg:diff}.

It is also worth mentioning that the embedding of $W$ in $\A^*$ can be
constructed in time polynomial in $|W|$.  We may not be able to
construct the whole Cayley table for $\A^*$, since that could
plausibly be exponentially big (see \cref{c:Afinite}).  However, we
can construct that part of the table which contains $W$.  It is a
simple matter to construct $\M^*$ from $W$.  We then find unimodular
matrices $U_1$ and $U_2$ such that $S_W=U_1\M^*U_2$ is the Smith
normal form of $\M^*$ (this can be done in polynomial time, see
\cite{Sims}).  Now $U_1$ encodes the linear relationship between the
old and new generators, and hence we can translate between the
(isomorphic) groups with presentation matrices $\M^*$ and $S_W$
respectively. From this information we get a transparent embedding of
$W$ in $\A^*$.

We next return to the order of the group $\A^*$.  The lower bound in
\cref{c:Afinite} is achieved by intercalates. The upper bound is
likely to be far from the truth.  It is clear from the proof of
\tref{t:detper} that $\M^*$ has a structure very different from the
matrices which achieve equality in the Minc-Br\`egman bound. However,
we have not made a concerted effort to improve the upper bound in
\cref{c:Afinite}.  It is worth noting, however, that every integer
$m\ge2$ is the order of $\A^*$ for some spherical latin trade $W$. In
particular, we can take $W$ to be the first two rows of $\Z_m$. It is
routine to check that $\A^*\iso\Z_m$, and that the
minimal and canonical embeddings coincide in this case.

Next, we consider {\em torsion rank}, which for abelian groups is
defined to be the minimum cardinality of a set of generators for the
torsion subgroup. We construct an infinite family of planar latin
trades $W$ such that if $W$ embeds in $G$ then the torsion rank of $G$
is at least logarithmic in $|W|$.  In particular, the torsion rank of
even the {\em minimal} embedding for $W$ can be forced to be arbitrarily
large.

\begin{lemma}\label{l:torank}
For each integer $m\geq 2$, there exists a 
planar latin bitrade $(W,B)$ of size $8m$ including distinct rows
$r,r_1,r_2,\dots ,r_m$ such that if $W$ is embedded in a finite abelian
group $G$ then $2r=2r_1=\dots =2r_m$.
\end{lemma}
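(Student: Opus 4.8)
The plan is to build a single explicit planar latin bitrade $(W,B)$ of size $8m$ by gluing together $m$ copies of a small "gadget" bitrade along a shared structure, in such a way that each gadget independently forces $2r=2r_j$ in any abelian-group embedding. A natural gadget to use is the size-$10$ spherical bitrade from \egref{eg:noncyc} (or a close relative of it): recall that in the analysis there one deduces relations of the form $2(\text{something})=0$ from a small $4\times4$ configuration. I would isolate the combinatorial core of that argument, namely a small spherical trade on a bounded number of rows/columns/symbols which, when embedded in an abelian group with a designated "base" row $r$ and a designated row $r_j$, forces $2r=2r_j$ via the kind of chasing done in \egref{eg:noncyc} and \egref{eg:nonab}. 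The $8$ in $8m$ strongly suggests each gadget contributes $8$ cells and that the $m$ gadgets share the row $r$ (and possibly a column and a symbol) but are otherwise disjoint in their rows, columns and symbols; one then checks that the union is still a bitrade, is separated, and is connected (connectedness coming precisely from the shared row $r$).

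The key steps, in order, are: (1) describe the gadget $G_j$ explicitly as a partial latin square on rows $\{r,r_j,\dots\}$, give its disjoint mate, and verify directly from \dref{d:bitrade2} that $(G_j, G_j')$ is a latin bitrade of size $8$; (2) check separation of $G_j$ (each row permutation $\psi_\cdot$ a single cycle) and compute its genus via \eref{e:genus} to confirm it is spherical; (3) form $W=\bigcup_j G_j$ and $B=\bigcup_j G_j'$ with the rows/columns/symbols of distinct gadgets disjoint except for the common row $r$, and verify that $(W,B)$ is still a bitrade (the only interaction is in row $r$, where one must check the symbol-permutation $\psi_r$ and the column/symbol permutations remain well-defined single cycles), that it is separated, and that it is connected; (4) assume $W$ embeds in a finite abelian group $G$ with operation $\circ$; apply \lref{l:000} to normalise $r$ (and one column, one symbol) to the identity; (5) run the group-chasing argument inside gadget $G_j$ — exactly analogous to the derivations "$2c_1=0$, $2c_2=c_1$, $2r_3=\cdots$" in \egref{eg:noncyc} — to conclude $2r_j = 2r$ for that $j$; since $j$ was arbitrary, $2r=2r_1=\cdots=2r_m$.

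The main obstacle I expect is step (3): designing the gadget so that the gadgets can be amalgamated along the single row $r$ while keeping the whole thing a \emph{separated} bitrade and, simultaneously, spherical of total size exactly $8m$. Sharing only one row is delicate because row $r$ then meets $8m$ columns and symbols, so $\psi_r$ is a permutation on many symbols and one must arrange the gadget so that the contributions to $\psi_r$ from the different gadgets concatenate into a single cycle (otherwise the trade is non-separated, and the genus — hence planarity — is not even defined). A clean way to handle this is to have each gadget contribute a short arc to $\psi_r$ and chain the arcs cyclically through the gadgets; one then also has to recheck that no column or symbol is shared between gadgets, so that all column- and symbol-permutations are automatically single transpositions-free single cycles of the required form. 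The genus computation is then routine from \eref{e:genus}: with $|W|=8m$ and a count of $|R|=?$, $|C|=?$, $|S|=?$ that the construction fixes, one verifies $2+|W|-|R|-|C|-|S|=0$. Once the construction is nailed down, the group-theoretic part (steps 4–5) is a mechanical repetition, per gadget, of the argument already carried out in \egref{eg:noncyc}, so no new ideas are needed there.
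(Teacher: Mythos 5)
There is a genuine gap, and it lies exactly where you predicted trouble: step (3). Your plan glues $m$ size-$8$ gadgets that are disjoint except for the common row $r$, but such a union can never be a spherical bitrade. For a separated, connected spherical bitrade the triangulation $\G$ is a simple planar triangulation, hence $3$-connected once it has more than four vertices; in your construction the vertex $r$ would be a cut vertex (deleting it disconnects the gadgets), which is impossible. The same obstruction shows up at the level of separation: $\psi_r$ acts on the symbols of row $r$, and if the gadgets share no symbols then $\psi_r$ splits into $m$ disjoint cycles, one per gadget, so row $r$ is not separated and no genus is defined. Your proposed remedy, chaining the arcs of $\psi_r$ ``cyclically through the gadgets,'' forces consecutive gadgets to share symbols, contradicting the disjointness your bitrade/genus verification relies on. Moreover, the gadget itself is never exhibited (you only gesture at \egref{eg:noncyc} ``or a close relative''), so steps (4)--(5) have nothing concrete to run on; the forcing mechanism is in fact elementary (an intercalate on rows $r,r_j$ already forces $2r=2r_j$ in an abelian group), and the real content of the lemma is the planar construction, which your proposal does not supply.

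The paper proceeds quite differently: it starts from the spherical bitrade $(W_0,B_0)$ of size $2m$ given by the first two rows of $\Z_m$ (mate obtained by swapping the rows), notes that the chosen row $r$ is incident with $m$ black triangles in the planar embedding, and then performs a local surgery inside each such black face: the face $(r,c,s)$ is replaced by a small planar patch introducing new rows $r',r_i$, new columns $c',c''$, new symbols $s',s''$ and six new white cells. Because each modification happens inside a face, planarity, connectedness and separation are preserved automatically, and the size becomes $2m+6m=8m$. The embedded relations $r+c''=s'=r_i+c'$ and $r+c'=s''=r_i+c''$ from each patch give $2r=2r_i$, and doing this in all $m$ black faces at $r$ yields $2r=2r_1=\dots=2r_m$. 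So the count $8m$ arises as $2m+6m$, not as $m$ disjoint blocks of size $8$, and the ``grow a base trade by face surgery'' idea is the missing ingredient in your approach.
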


\begin{proof}
Let $W_0$ be the latin trade formed from the first two rows of the
addition table for $\Z_m$, with disjoint mate $W_0'$ formed by
swapping these rows.
 
By observation, $(W_0,B_0)$ is a connected, separated planar latin
bitrade of size $2m$.  Let $r$ be a row of this latin bitrade.  In any
planar embedding, there are $m$ black triangles incident with $r$.
Let $(r,c,s)$ be one such black triangle.

Introducing rows $r',r_1$, columns $c',c''$ and symbols $s',s''$ which
do not occur in $(W_0,B_0)$, we form a new latin bitrade $(W_1,B_1)$ as
follows:
\begin{eqnarray*}
W_1 & = & W_0\cup 
\big\{(r,c'',s'),(r,c',s''),(r',c,s'),(r',c',s),(r_1,c',s'),(r_1,c'',s'')\big\}, \\
B_1& = & \big(B_0\setminus \{(r,c,s)\}\big)\cup 
\big\{(r,c,s'),(r,c',s),(r',c,s),(r',c',s'),(r,c'',s''), \\
& & (r_1,c'',s'),(r_1,c',s'')\big\}.
\end{eqnarray*} 
\fref{f:gen} shows the replacement for the black triangle $(r,c,s)$ 
used to form $(W_1,B_1)$. In this diagram the shaded triangles 
represent black triangles.
It is not hard to check that $(W_1,B_1)$ is connected and separated. 

\begin{figure}[h]
\centerline{\includegraphics[height=130pt]{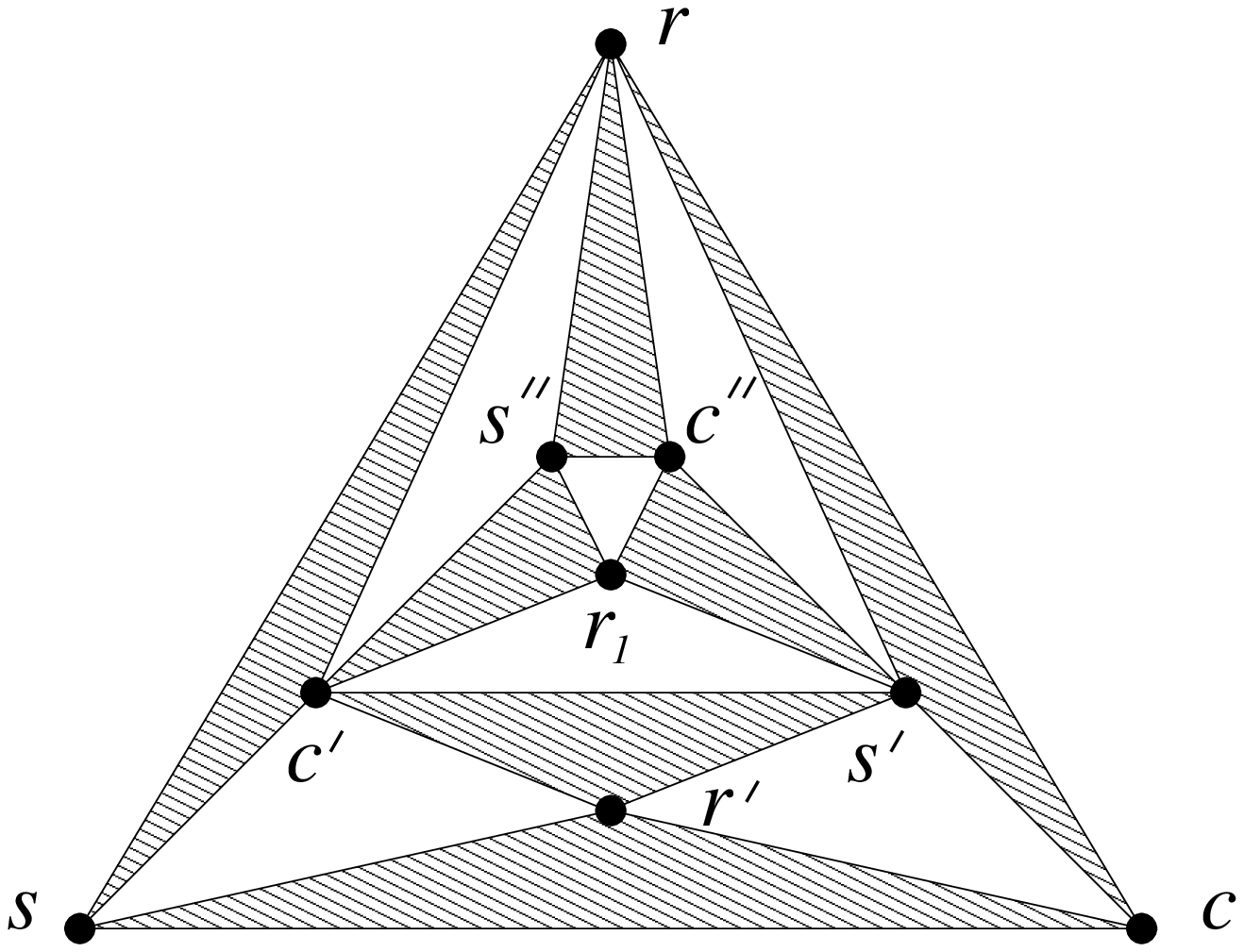}}
\caption{\label{f:gen} The transformation to $(W_1,B_1)$} 
\end{figure}

Moreover, it is clear from \fref{f:gen} that $(W_1,B_1)$ is still planar.
This fact can also be verified by \eref{e:genus},
as we have introduced two new rows, two new columns and
two new symbols, with $|W_1|=|W_0|+6$. 
Thus, $W_0$ embeds in a finite abelian group. For any such embedding:
\begin{eqnarray*}
r+c'' & = s' & = r_1 + c' \hbox{\rm \ and } \\
r + c' & = s'' & = r_1+c''.
\end{eqnarray*}
The above equations imply that $2r=2r_1$. 
 
By repeating this process for each of the $m$ black triangles from
$(W_0,B_0)$ that contain row $r$, we obtain a connected, separated
planar latin bitrade with size $8m$ and rows $r,r_1,\dots ,r_m$ that
must satisfy $2r=2r_1=\dots =2r_m$ in any group embedding.
\end{proof}

\begin{corollary}
For each integer $m\geq 2$, there exists a 
planar latin bitrade $(W,B)$ of size $8m$ such that if $W$
embeds in an abelian group $G$, then $G$ has torsion rank 
at least $\log_2(m+1)$. 
\end{corollary}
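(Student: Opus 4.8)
The plan is to use the planar bitrade $(W,B)$ of size $8m$ constructed in \lref{l:torank} and to convert its $m+1$ coincident doublings into a rank estimate for the $2$-torsion of $G$. Suppose $W$ embeds in an abelian group $G$, and write $H$ for the torsion subgroup of $G$. If $H$ is not finitely generated, its torsion rank is infinite and the claimed bound is immediate, so assume $H$ is finitely generated; then $H$ is finite, being a finitely generated torsion abelian group. First I would replace $G$ by the subgroup generated by the image of the embedding of $W$, which is finitely generated abelian; since the bitrade of \lref{l:torank} is connected, \cref{c:torsion} then shows that $W$ embeds in a finite subgroup $H_0\le H$. At this point \lref{l:torank} applies to the embedding into $H_0$ and produces distinct rows $r,r_1,\dots,r_m$ of $W$ satisfying $2r=2r_1=\dots=2r_m$ in $H_0$, and hence in $G$.

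Next I would set $g_i:=r-r_i$ for $1\le i\le m$. Since an embedding sends distinct rows to distinct group elements, the $g_i$ are nonzero and pairwise distinct, and each satisfies $2g_i=0$ by the relation just derived. Hence $\{0,g_1,\dots,g_m\}$ is a set of $m+1$ distinct elements of the subgroup $G[2]:=\{x\in G:2x=0\}$, which lies inside $H$. Therefore $|H[2]|=|G[2]|\ge m+1$.

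To conclude, let $t$ be the torsion rank of $G$, that is, the least size of a generating set for $H$. Then $H$ is a quotient of $\Z^t$, so by Smith normal form (or the structure theorem) it is a direct sum of at most $t$ cyclic groups; consequently its subgroup of elements of order dividing $2$ has order at most $2^t$. Combining this with the preceding paragraph gives $2^t\ge|H[2]|\ge m+1$, so $t\ge\log_2(m+1)$, as required. The corollary is little more than bookkeeping once \lref{l:torank} is in hand; the only points that need care are the reduction from a general abelian group to a finite one (handled by \cref{c:torsion}, or trivially when $H$ fails to be finitely generated) and the elementary fact that the torsion rank of $G$ bounds the rank of the elementary abelian group $G[2]$.
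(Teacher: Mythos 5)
Your proof is correct and follows essentially the same route as the paper: invoke \lref{l:torank} to get $m+1$ distinct elements with a common double, then bound the number of such elements by $2^{t}$ where $t$ is the torsion rank (the paper phrases this as ``at most $2^\rho$ solutions to $2x=g$'', which is exactly your coset-of-$G[2]$ count). The only difference is that you spell out the reduction from a general abelian $G$ to the finite case via \cref{c:torsion}, a point the paper's proof leaves implicit; this is a harmless (indeed welcome) extra bit of care, not a change of method.
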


\begin{proof} 
For any fixed element $g$ in a finite abelian group $G$ of torsion
rank $\rho$, the number of solutions to $2x=g$ in $G$ is at most $2^\rho$.
This follows directly from the equivalent statement for cyclic groups.
In \lref{l:torank}, $r,r_1,\dots,r_m$ are $m+1$ distinct elements of $G$
such that $2r=2r_1=\dots =2r_m$.
\end{proof}

\section{Embedding Bitrades}\label{s:bitrades}

In earlier sections we have considered problems involving embedding
a latin trade in a group. However, for any latin trade there is at least
one disjoint mate, and it is natural to consider whether the embedding
behaviour of the mate is related to the embedding behaviour of the trade.
This is the issue considered in this section.

First we show that for spherical trades there is essentially only one
mate that needs to be considered.

\begin{lemma}\label{l:uniqmate}
Let $(W,B)$ be a planar latin bitrade. 
If $(W,B')$ is a latin bitrade and $B'\neq B$, then $(W,B')$ is 
neither connected nor separated.
\end{lemma}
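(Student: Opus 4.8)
The plan is to suppose that $(W,B')$ is a latin bitrade with $B'\ne B$ and show that it must fail to be separated or connected. The key observation is that the spherical bitrade $(W,B)$ comes equipped with the triangulation $\G$ of the plane, and the extra structure of a bitrade on $W$ amounts to a combinatorial choice at each row, column and symbol of $W$. Specifically, following \sref{ss:genus}, for each row $r$ the permutation $\psi_r$ of the symbols in that row is a \emph{single cycle} (since $(W,B)$ is separated), and $\psi_r$ is exactly the data that says which triple of $B$ corresponds to each triple of $W$ in row $r$: $(r,c,s)\in W$ is matched with $(r,c',s')\in B$ where $s'=\psi_r(s)$ and $c'$ is the column of $W$ in row $r$ containing $s'$. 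The point I would make is that $\psi_r$ is \emph{forced}: in a spherical (genus $0$) bitrade the cyclic order in which the symbols appear around the vertex $r$ of $\G$ is determined by the planar embedding, and the single cycle $\psi_r$ must be compatible with that rotation (it is the ``next symbol clockwise'' map, or its inverse). So the only freedom in choosing a disjoint mate, given $W$, is: at each row/column/symbol vertex, pick one cycle of its row/column/symbol permutation, partitioning the incident white triangles.

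First I would set up this correspondence carefully: a disjoint mate $B'$ of $W$ is equivalent to a choice, for every $r\in R$, of the permutation $\psi'_r$ (arising from $(W,B')$ as in \dref{def2}), and likewise $\psi'_c$, $\psi'_s$; these must be mutually consistent in the sense of \dref{d:bitrade2}. Because $(W,B)$ is genus $0$, the consistency forces each of these permutations to be built from the rotation system of $\G$ at the corresponding vertex; in particular the \emph{only} single-cycle solution at each vertex is the one given by $B$ itself. Hence if $B'\ne B$, then at some vertex $v$ (say a row $r$) the permutation $\psi'_r$ is \emph{not} a single cycle, i.e. it splits into $\ge2$ cycles. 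That already shows $(W,B')$ is not separated.

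Next I would handle connectedness. The separation process of \sref{ss:genus} applied to $(W,B')$ splits the non-single-cycle vertices and yields a separated bitrade $(W',B'')$ of the same size, which carries its own triangulation $\G'$; one checks that $\G'$ is obtained from $\G$ by ``cutting'' the plane along the vertices that got split. Cutting a planar triangulation at a vertex whose incident triangles form $k\ge2$ groups disconnects that vertex's neighbourhood into $k$ pieces; since every vertex of $\G'$ has the property that its white triangles form a single cycle, one argues (using that $\G$ is a triangulation of the sphere, so is $2$-connected) that the cut genuinely disconnects $\G'$, hence $(W',B'')$ — and therefore $(W,B')$, by the remark that separation preserves connectedness — is disconnected. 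Combining the two cases: whenever $B'\ne B$, the mate $(W,B')$ is non-separated, and moreover $(W,B')$ is disconnected. That is exactly the claimed conclusion.

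The main obstacle I anticipate is making rigorous the assertion that in a genus-$0$ bitrade the mate is \emph{uniquely} determined by the requirement that all the vertex permutations be single cycles — i.e. that there is no ``exotic'' single-cycle choice at some vertex inconsistent with the planar rotation but still globally consistent. I would attack this by an Euler-characteristic / counting argument: any disjoint mate $B'$ of $W$ produces (after separation) a $2$-coloured triangulation of \emph{some} orientable surface with the same number $|W|$ of white faces; if $(W,B')$ were already separated and connected it would have a well-defined genus computed by \eref{e:genus}, and since that formula depends only on $|W|$, $|R|$, $|C|$, $|S|$ — which are unchanged — it would equal $0$, forcing a planar triangulation; but a connected planar triangulation on a given vertex set with a given white-face set and the coherent orientation is rigid enough that its rotation system, and hence $B'$, is determined, giving $B'=B$. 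The residual case where $(W,B')$ is separated but disconnected, or connected but non-separated, is then disposed of directly, and the lemma follows.
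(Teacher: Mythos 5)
The heart of your proposal --- the fallback Euler-characteristic argument --- is correct and takes a genuinely different route from the paper. You argue: if $(W,B')$ were separated \emph{and} connected, then formula \eref{e:genus}, which depends only on $|W|,|R|,|C|,|S|$ (all determined by $W$ alone), forces $(W,B')$ to have genus $0$; its triangulation is then a second spherical triangulation with the same vertex set and the same edge set as $\G$ and the same white faces, and rigidity of planar triangulations forces the same face set, hence $B'=B$. That works, but the phrase ``rigid enough'' must be backed by the standard facts you are implicitly using: a simple planar triangulation on at least four vertices is $3$-connected, and by Whitney's theorem a $3$-connected planar graph has a unique sphere embedding, so its facial triangles are determined by the abstract graph; the black faces are then the complement of the white ones and determine $B'$. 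The paper instead disposes of this case by explicitly constructing a subdivision of $K_{3,3}$ inside $\G$ and invoking Kuratowski; your route trades that hands-on construction for two classical theorems and is arguably cleaner. By contrast, your opening claim that the planar rotation at each vertex \emph{forces} the single-cycle permutations (and the subsequent ``cutting disconnects $\G'$'' argument) is never substantiated, and you rightly abandon it; it should be deleted rather than relied on.

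The genuine gap is the two residual cases, which you dismiss as ``disposed of directly'' without any argument, even though the lemma's conclusion (``neither connected nor separated'') requires excluding them, and each needs a real, if short, idea. For $(W,B')$ connected but not separated: apply the separation process to get a connected, separated bitrade of the same size but with strictly more rows, columns and symbols; formula \eref{e:genus} then returns a genus strictly less than $0$, which is impossible --- this is exactly what the paper does, and nothing more ``direct'' is available. For $(W,B')$ separated but not connected: you must observe that separatedness forces distinct components to occupy pairwise disjoint rows, columns and symbols (a shared row $r$ would split $\psi_r$ into at least two cycles), so the filled cells of $W$ --- which are also the filled cells of $B$ --- break into blocks, and restricting $(W,B)$ to those blocks disconnects it, contradicting the connectedness of the planar bitrade $(W,B)$. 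With these two arguments supplied and the rigidity step properly referenced, your proof is complete; as it stands, it proves only that $(W,B')$ cannot be simultaneously separated and connected.
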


\begin{proof}
Firstly, suppose that $(W,B')$ is separated but not connected.  Then,
there must exist a partition of the rows and columns of this latin
bitrade into non-empty sets $R_1,R_2$ and $C_1,C_2$, respectively,
such that each element of $B'$ (and hence $W$) lies either in a cell
from $R_1\times C_1$ or a cell from $R_2\times C_2$.  This implies
that $(W,B)$ is also disconnected, a contradiction.

Secondly, suppose that $(W,B')$ is connected but not separated. Then
we may transform $(W,B')$ into a separated latin bitrade of the same
size by increasing the number of rows, columns and symbols, as in
Example \ref{sepa}. This new latin bitrade is connected and separated
so has a well-defined genus $g$. However, from \eref{e:genus}, $g$
must be strictly less than 0, the genus of $(W,B)$, a contradiction.
 
Thirdly, suppose that $(W,B')$ is both connected and separated.  
The lemma is trivially true for intercalates, so we assume henceforth
there is a row, column or symbol which occurs at least three times.
By considering species equivalents if necessary, we
lose no generality by assuming that
there exists a row $r$ and some $j\geq 2$ such that:
$\{(r,c_i,s_i)\mid 0\leq i\leq j\}\subset W$, $\{(r,c_i,s_{i+1})\mid
0\leq i\leq j\}\subset B$ (where subscripts are calculated mod $j$),
but $(r,c_0,s_1)\not\in B'$.

We claim that such a circumstance implies that the planar graph $\G$ 
contains a subdivision of $K_{3,3}$, contradicting Kuratowski's theorem.

Consider the sequence $x(0),x(1),\dots ,x(\ell)$,
where $\ell\geq 2$, $x(0)=0$, $x(\ell)=1$ and
$(r,c_{x(i)},s_{x(i+1)})\in B'$ for $0\leq i\leq \ell-1$.
Note that such a sequence exists because $(W,B')$ is separated. 
Define $x(m)=\hbox{\rm{max}}\{x(i)\mid 1\leq i\leq \ell -1\}$. 
We now define our subdivision of $K_{3,3}$.  Let the vertices of one
colour be $r$, $c_0$ and $c_{x(m)}$ and the vertices of the other
colour be $s_0,s_1$ and $s_{x(m)}$.

Firstly, $r$ is clearly adjacent to $s_0$, $s_1$ and $s_{x(m)}$ in
$\G$.  Next, $(r,c_0,s_0)\in W$ implies that $c_0$ is adjacent to
$s_0$.  Similarly, $c_{x(m)}$ is adjacent to $s_{x(m)}$.  Also $c_0$
is adjacent to $s_1$ because $(r,c_0,s_1)\in B$.

Next, connect $c_{x(m)}$ to $s_0$ via a path on the sequence of vertices: 
$$c_{x(m)},s_{x(m)+1},c_{x(m)+1},\dots ,s_j,c_j,s_0.$$ 
Connect $c_0$ to $s_{x(m)}$ via the path 
$c_{x(0)},s_{x(1)},c_{x(1)},\dots ,s_{x(m)}$. 
Finally, connect $c_{x(m)}$ to $s_1$ via the path 
$c_{x(m)},s_{x(m+1)},c_{x(m+1)},\dots ,s_{x(\ell)}$. 
Observe that each of the above paths are disjoint on internal vertices. 
We thus have the required subdivision of $K_{3,3}$.
\end{proof}

It is important to realise that \lref{l:uniqmate} does not claim that
spherical trades have a unique disjoint mate. 
To illustrate this
point, we exhibit distinct latin bitrades
$(W,B)$ and $(W,B')$ such that $(W,B)$ is connected, separated and planar but 
$(W,B')$ is neither separated nor connected:
\[
W
=\begin{array}{|cccccc|}
\hline\nspacer
0&1&2&3&\cdot&\cdot\\[0.6ex]
4&5&\cdot&\cdot&3&2\\[0.6ex]
2&\cdot&0&\cdot&\cdot&\cdot\\[0.6ex]
\cdot&3&\cdot&1&\cdot&\cdot\\[0.6ex] 
3&\cdot&\cdot&\cdot&4&\cdot\\[0.6ex] 
\cdot&2&\cdot&\cdot&\cdot&5 
\sspacer\\
\hline
\end{array}
\quad
B
=\begin{array}{|cccccc|}
\hline\nspacer
3&2&0&1&\cdot&\cdot\\[0.6ex]
2&3&\cdot&\cdot&4&5\\[0.6ex]
0&\cdot&2&\cdot&\cdot&\cdot\\[0.6ex]
\cdot&1&\cdot&3&\cdot&\cdot\\[0.6ex] 
4&\cdot&\cdot&\cdot&3&\cdot\\[0.6ex] 
\cdot&5&\cdot&\cdot&\cdot&2 
\sspacer\\
\hline
\end{array}
\quad
B'
=\begin{array}{|cccccc|}
\hline\nspacer
2&3&0&1&\cdot&\cdot\\[0.6ex]
3&2&\cdot&\cdot&4&5\\[0.6ex]
0&\cdot&2&\cdot&\cdot&\cdot\\[0.6ex]
\cdot&1&\cdot&3&\cdot&\cdot\\[0.6ex] 
4&\cdot&\cdot&\cdot&3&\cdot\\[0.6ex] 
\cdot&5&\cdot&\cdot&\cdot&2 
\sspacer\\
\hline
\end{array}
\]

\begin{example}\label{eg:onegrp}
The smallest connected, separated bitrade $(W,B)$ such that $W$ embeds
in a group and $B$ does not embed in any group, is the following
example of genus $1$. Let $W$ be the bold entries in the left hand
matrix, and $B$ be the entries in the right hand matrix.
\[
\begin{array}{ccc}
\begin{array}{|cccccc|}
\hline\nspacer
{\bf 0}&{\bf 1}&{\bf 2}&3&{\bf 4}&5\\[0.6ex]
{\bf 1}&{\bf 2}&3&4&5&0\\[0.6ex]
{\bf 2}&3&{\bf 4}&5&{\bf 0}&1\\[0.6ex]
3&4&5&0&1&2\\[0.6ex]
{\bf 4}&5&{\bf 0}&1&{\bf 2}&3\\[0.6ex]
5&0&1&2&3&4\sspacer\\
\hline
\end{array}
&\qquad&
\begin{array}{|cccccc|}
\hline\nspacer
\looped{1}&2&\boxed{\looped{4}}&\cdot&\boxed{0}&\cdot\\[0.6ex]
2&1&\cdot&\cdot&\cdot&\cdot\\[0.6ex]
\looped{4}&\cdot&\looped{0}&\cdot&2&\cdot\\[0.6ex]
\cdot&\cdot&\cdot&\cdot&\cdot&\cdot\\[0.6ex]
0&\cdot&\boxed{2}&\cdot&\boxed{4}&\cdot\\[0.6ex]
\cdot&\cdot&\cdot&\cdot&\cdot&\cdot\sspacer\\
\hline
\end{array}
\\ \\
W\mbox{ embedded in }\Z_6&&B
\end{array}
\]
As shown, $W$ embeds in $\A^*=\Z_6$. However, $B$ clearly does not
embed in $\B^*=\Z_3$. Indeed, it cannot be embedded in any group since it
fails the quadrangle criterion, as demonstrated by the two quadrangles
marked with $\boxed{\phantom{0}}$ and $\,\looped{\phantom{00}}$.

To verify that no smaller bitrade has the
claimed property, it suffices to rule out spherical bitrades using
\tref{t:main} and note that the only two other bitrades of size 11 or
smaller are those discussed in \egref{eg:nogrp}.
\end{example}

\begin{example}\label{eg:diff}
For some spherical bitrades $(W,B)$,
the smallest group in which $W$ embeds is distinct
from the smallest group in which $B$ embeds. The smallest example of
this type has size 12. An example of that size is:
\[
\begin{array}{ccc}
\begin{array}{|cccccc|}
\hline\nspacer
\bf 0&\bf 1&\bf 2&\bf 3&4&5\\[0.6ex]
1&2&3&4&5&0\\[0.6ex]
\bf 2&\bf 3&\bf 4&5&0&\bf 1\\[0.6ex]
\bf 3&4&5&\bf 0&1&2\\[0.6ex]
4&5&0&1&2&3\\[0.6ex]
5&0&\bf 1&2&3&\bf 4\sspacer\\
\hline
\end{array}
&\qquad&
\begin{array}{|cccccc|}
\hline\nspacer
2&3&1&0&\cdot&\cdot\\[0.6ex]
\cdot&\cdot&\cdot&\cdot&\cdot&\cdot\\[0.6ex]
3&1&2&\cdot&\cdot&4\\[0.6ex]
0&\cdot&\cdot&3&\cdot&\cdot\\[0.6ex]
\cdot&\cdot&\cdot&\cdot&\cdot&\cdot\\[0.6ex]
\cdot&\cdot&4&\cdot&\cdot&1\sspacer\\
\hline
\end{array}
\\ \\
W\mbox{ embedded in }\Z_6&&B
\end{array}
\]
We have demonstrated that $W$ (the bold entries in the left hand
square) embeds in $\Z_6$. It does not embed in any smaller group,
since it uses five distinct columns (ruling out groups of order $<5$)
and contains a latin subsquare of order 2 (ruling out $\Z_5$).
With a little effort it is possible to show that neither $W$ or $B$
embeds in $S_3$. 

We now argue that $B$ cannot be embedded in a cyclic group of any
finite order $m$. Along the lines of \egref{eg:noncyc}, we assume
$r_0=c_0=s_0=0$ and deduce that $c_1=r_2=-c_2$ and $2c_1=c_2$, from
which we infer $3c_1=0$. Also $r_3=c_3=c_1/2$,
$r_5\circ c_5=c_2$ and $r_2\circ c_5=r_5\circ c_2$. These last
two equations imply $2r_5=r_2$. But now we have
$c_1=r_2=2r_3=2r_5$ and $c_1=4c_1=2c_2$ which implies that $c_2$,
$r_3$ and $r_5$ cannot be distinct. However, we know $r_3$ and $r_5$
are distinct and that $c_2\neq c_3=r_3$, so we conclude that $c_2=r_5$.
This is impossible, since then $r_5\circ c_5=c_2=r_5=r_5\circ c_0$,
which means that $c_5=c_0$.

We conclude that $B$ cannot embed in any cyclic group, whereas 
the smallest group that $W$ embeds in is cyclic. The bitrade $(W,B)$
is the smallest separated, connected example 
to achieve this property, and also the
smallest where the two trades have minimal group embeddings 
in distinct groups. (\egref{eg:noncyc} is the only smaller bitrade 
which does not have the property that both trades embed in the same
cyclic group.)
\end{example}

Despite the preceding examples which show that the embedding behaviour
is generally different for the two trades in a bitrade, it seems that
for spherical trades there is a special relationship between the two
trades.  If $(W,B)$ is a spherical latin bitrade, then so is $(B,W)$.
Thus we may apply the results of \sref{s:main} to the black triangles,
obtaining an embedding of $B$ into an abelian group $\B^*$.
Although the minimal embeddings of $W$ and $B$ may differ, as seen in
\egref{eg:diff}, we conjecture that their canonical embeddings cannot.

\begin{conjecture}\label{black=white}
If $(W,B)$ is a spherical latin bitrade, 
then $\A^*\iso\B^*$ and $\A\iso\B$.
\end{conjecture}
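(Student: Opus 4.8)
\medskip

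\noindent\textbf{Proof proposal.}

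The first move is to collapse both assertions into one. By \lref{l:AAstar}, $\A\iso\A^*\oplus\Z\oplus\Z$, and applying the same lemma to the bitrade $(B,W)$ — which is again spherical, connected and separated — gives $\B\iso\B^*\oplus\Z\oplus\Z$. Since $\A^*$ and $\B^*$ are finite (\cref{c:Afinite} and its black counterpart), the classification of finitely generated abelian groups shows that $\A\iso\B$ holds if and only if $\A^*\iso\B^*$. Now $\A^*$ is the torsion subgroup of $\mathrm{coker}(\M)$ while at the same time $\A^*=\mathrm{coker}(\M^*)$ with $\M^*$ square, so $\M$ and $\M^*$ have the same list of nonzero invariant factors. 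Hence the conjecture is equivalent to the purely matrix-theoretic statement that $\M^*$ and $\MB^*$ (equivalently, $\M$ and $\MB$ up to the two zero invariant factors) have the same Smith normal form. By \cref{c:allwhite} and its black analogue we are free to choose the deleted white triangle and the deleted black triangle however is convenient.

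The one structural identity I would build everything on is $\M\,\M^T=\MB\,\MB^T$, both products being equal to $D+A$, where $A$ is the adjacency matrix of $\G$ and $D$ is the diagonal matrix of vertex half-degrees of $\G$. Indeed the $(v,v')$ entry of $\M\M^T$ counts the white faces incident with both $v$ and $v'$: for $v\ne v'$ this is $1$ if $vv'$ is an edge of $\G$ and $0$ otherwise, since every edge lies on a unique white face and conversely two vertices of a white triangle always span an edge; for $v=v'$ it equals the number of white faces at $v$, which is $\deg_\G(v)/2$ because colours alternate around $v$. The same count with black faces gives the same matrix, as the black faces also partition the edges. This identity already yields the order statement $|\A^*|=|\B^*|$: because $\M$ has only $n+1$ columns, the Cauchy--Binet expansion of any $(n+1)\times(n+1)$ minor of $\M\M^T$ has a single term and factors as a product of two maximal minors of $\M$, so the $(n+1)$-st determinantal divisor satisfies $\Delta_{n+1}(\M\M^T)=\Delta_{n+1}(\M)^2$; the same holds for black, and $\Delta_{n+1}(\M)=|\A^*|$, whence $|\A^*|^2=\Delta_{n+1}(\M\M^T)=\Delta_{n+1}(\MB\MB^T)=|\B^*|^2$.

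For the full isomorphism one needs $\Delta_k(\M)=\Delta_k(\MB)$ for \emph{every} $k\le n+1$, not just for $k=n+1$, and here the bare identity $\M\M^T=\MB\MB^T$ is not enough: for $k<n+1$ the Cauchy--Binet expansion of a $k\times k$ minor of $\M\M^T$ is a sum of products of $k\times k$ minors of $\M$, so $\Delta_k(\M\M^T)$ is in general strictly larger than $\Delta_k(\M)^2$, and one can construct integer matrices sharing $MM^T$ but differing in their lower invariant factors. So the planar structure of $\G$ has to be used more seriously, and I would try one of two routes. The first is an ``all-minors'' argument: give a combinatorial (matrix-tree type) expansion of each $k\times k$ minor of the white vertex--face incidence $\M$ as a sum over suitable subgraphs of $\G$, and match it with an $(n+1-k)\times(n+1-k)$ complementary minor of $\MB$ by exploiting that white and black faces interleave around every vertex — essentially a combinatorial Poincar\'e duality for the spherical triangulation. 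The second route is to manufacture a perfect pairing $\A^*\times\B^*\to\mathbb{Q}/\mathbb{Z}$ out of the triangulation of $S^2$, a ``linking form'', which for finite abelian groups forces $\A^*\iso\mathrm{Hom}(\B^*,\mathbb{Q}/\mathbb{Z})\iso\B^*$; concretely one would hope to find a single symmetric integer matrix, assembled from $\M^*$ and $\MB^*$ together, that presents both groups up to integral congruence, since congruent symmetric matrices have equal Smith normal forms.

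I expect the genuine obstacle to be exactly this last step: upgrading the ``equal order / equal top determinantal divisor'' information that $\M\M^T=\MB\MB^T$ supplies into equality of all the invariant factors. The self-duality of face-$2$-coloured triangulations of the sphere is clearly the right tool, but making it produce a statement about Smith normal forms rather than merely about permanents and determinants is delicate — which is presumably why the authors leave it as \cjref{black=white}. The exhaustive checks for bitrades of size at most $19$ are at least consistent with either route succeeding.
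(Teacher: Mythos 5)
You should be aware at the outset that the paper does not prove this statement: it is \cjref{black=white}, left open, and the only supporting results the authors establish are \tref{t:sameord} (that $|\A^*|=|\B^*|$, proved by double-counting matchings of size $|W|-1$ in the dual of $\G$ via permanents) together with a finite verification for all species of size at most $19$. So there is no proof of the paper's to compare yours against, and your proposal, as you yourself say plainly, does not close the conjecture either. What you do establish is correct: the reduction of $\A\iso\B$ to $\A^*\iso\B^*$ via \lref{l:AAstar} is exactly the equivalence the paper notes; the identity $\M\M^T=\MB\MB^T=D+A$ (with $A$ the adjacency matrix of $\G$ and $D$ the half-degrees) is valid, since faces alternate in colour around each vertex and each edge lies on exactly one face of each colour; and your Cauchy--Binet argument, using that $\M$ has only $n+1$ columns so every maximal minor of $\M\M^T$ factors as a product of two maximal minors of $\M$, does give $\Delta_{n+1}(\M)^2=\Delta_{n+1}(\MB)^2$ and hence $|\A^*|=|\B^*|$. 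This is a genuinely different, and quite clean, route to \tref{t:sameord} compared with the paper's matching count. (One small imprecision: $\M$ and $\M^*$ have the same invariant factors exceeding $1$, but not literally ``the same list of nonzero invariant factors'', since $\M$ has $n+1$ of them and $\M^*$ has $n$; this does not affect anything you use.)

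The genuine gap is the one you name yourself: equality of the top determinantal divisor does not determine the Smith normal form, and neither of your two proposed continuations (an all-minors duality expansion, or a linking-form/symmetric-presentation argument forcing $\A^*\iso\mathrm{Hom}(\B^*,\mathbb{Q}/\mathbb{Z})$) is carried out; they are research directions, not steps. In particular your own observation that $\M\M^T=\MB\MB^T$ cannot control $\Delta_k$ for $k<n+1$ shows that the structural input you have actually used is insufficient, so the argument stops exactly where the paper stops. You cannot claim more than \tref{t:sameord} plus the (correct) equivalence of the two statements in the conjecture; the isomorphism $\A^*\iso\B^*$ remains open both in your write-up and in the paper.
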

 
By \lref{l:AAstar}, the statements that $\A^*\iso\B^*$ and $\A\iso\B$
are equivalent.  \cjref{black=white} is true for all 72379 species of
(unordered) spherical latin bitrades of size up to 19, as catalogued
in \cite{tradenum}.  Moreover, the following result also supports
\cjref{black=white}.

\begin{theorem}\label{t:sameord}
If $(W,B)$ is a spherical latin bitrade then $|\A^*|=|\B^*|$.
\end{theorem}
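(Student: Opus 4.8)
The plan is to reduce \tref{t:sameord} to a Gram-matrix identity between $\M$ and the analogous matrix $\MB$ built from the black triangles. Recall that $\A\iso\A^*\oplus\Z\oplus\Z$ by \lref{l:AAstar}, so $|\A^*|$ is the order of the torsion subgroup of $\A$. Since $\A$ is the cokernel of the map with matrix $\M$, and $\M$ has full column rank $n+1$ (as $\A$ has free rank $2$ and $|\V|=n+3$), the standard theory of presentation matrices identifies $|\A^*|$ with the $(n+1)$st determinantal divisor of $\M$; that is,
\[
|\A^*|=\gcd\big\{\,|\det\M_{v,v'}|\ :\ v\ne v'\in\V\,\big\},
\]
where $\M_{v,v'}$ denotes the $(n+1)\times(n+1)$ matrix obtained from $\M$ by deleting the two rows indexed by $v$ and $v'$. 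Defining $\MB$ from the black triangles exactly as $\M$ was defined from the white ones --- it is again $(n+3)\times(n+1)$ because $(B,W)$ is also a spherical bitrade of size $n+1$ --- the same reasoning gives $|\B^*|=\gcd\{|\det\MB_{v,v'}|:v\ne v'\in\V\}$. So it suffices to prove that $|\det\M_{v,v'}|=|\det\MB_{v,v'}|$ for every pair $v\ne v'$.

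The crucial point is that $\M\M^T=\MB\MB^T$. For $v\ne v'$, the $(v,v')$ entry of $\M\M^T$ is the number of white triangles containing both $v$ and $v'$; in the triangulation $\G$ every edge lies in exactly one white and exactly one black face, and any two vertices of a triangular face form an edge, so this entry is $1$ if $\{v,v'\}$ is an edge of $\G$ and $0$ otherwise --- independently of which colour class we use. The $(v,v)$ entry of $\M\M^T$ is the number of white triangles incident with $v$; since $\G$ is face $2$-coloured the faces around $v$ alternate in colour, so this number equals $\deg_\G(v)/2$, again independent of the colour. Hence $\M\M^T=\tfrac12 D+A=\MB\MB^T$, where $D$ is the diagonal matrix of $\G$-degrees and $A$ is the adjacency matrix of $\G$.

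Now fix $v\ne v'$. Deleting rows $v,v'$ from $\M$ does not alter the entries of the surviving rows, so $\M_{v,v'}\M_{v,v'}^T$ is precisely the principal submatrix $Q_{v,v'}$ of $\M\M^T$ obtained by deleting the rows and columns indexed by $v$ and $v'$. Because $\M_{v,v'}$ is square, $(\det\M_{v,v'})^2=\det(\M_{v,v'}\M_{v,v'}^T)=\det Q_{v,v'}$; the identical computation with $\MB$ gives $(\det\MB_{v,v'})^2=\det Q_{v,v'}$, the \emph{same} matrix $Q_{v,v'}$ by the previous paragraph. Thus $|\det\M_{v,v'}|=|\det\MB_{v,v'}|$, and taking the greatest common divisor over all pairs $v\ne v'$ gives $|\A^*|=|\B^*|$.

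The only real content is the identity $\M\M^T=\MB\MB^T$, which comes down to the two elementary facts that each edge of $\G$ meets exactly one triangle of each colour and that the triangles around a vertex alternate in colour; the rest is bookkeeping (a Cauchy--Binet-type minor identity, plus the determinantal-divisor description of the torsion of a cokernel), so I do not anticipate a genuine obstacle. One should just be careful that $|\A^*|$ is a $\gcd$ over many $(n+1)\times(n+1)$ minors rather than a single determinant, so the term-by-term equality $|\det\M_{v,v'}|=|\det\MB_{v,v'}|$ is what is actually needed. I would also note that this argument controls only the top determinantal divisor, so it does not appear to yield $\A^*\iso\B^*$; \cjref{black=white} would presumably require a genuinely different idea.
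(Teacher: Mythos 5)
Your proof is correct, and it takes a genuinely different route from the paper's. The paper reduces the theorem to the identity $\per(\M^*)=\per(\MB^*)$, using \tref{t:detper} together with the presentation-matrix fact that $|\A^*|=|\det(\M^*)|$, and proves the permanent identity by double counting the matchings of size $|W|-1$ in the dual of $\G$: by \cref{c:allwhite} the number of such matchings avoiding a prescribed white dual vertex is $\per(\M^*)$ regardless of which vertex is chosen, so the total count equals both $|W|\per(\M^*)$ and $|B|\per(\MB^*)$. You instead stay with the rectangular presentation matrices, identify $|\A^*|$ and $|\B^*|$ with the gcd of the $(n+1)\times(n+1)$ minors --- legitimate because $\M$ and $\MB$ have full column rank, which you correctly extract from the free rank of $\A$ and $\B$ being two (\lref{l:AAstar} together with \cref{c:Afinite}) --- and then prove the stronger statement that corresponding maximal minors agree up to sign, via the Gram identity $\M\M^T=\MB\MB^T$; this identity holds because each edge of $\G$ lies in exactly one face of each colour and each vertex meets equally many faces of each colour (the diagonal equality also follows directly from $W$ and $B$ occupying the same cells). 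Since ${\M}_{v,v'}$ is square, only $\det(AA^T)=(\det A)^2$ is needed, not a genuine Cauchy--Binet argument. Your route bypasses \tref{t:detper} and the matching count altogether and yields term-by-term equality of the maximal minors rather than merely equality of their gcds; the paper's route, in exchange, exhibits $|\A^*|$ as a permanent and lets the invariance of $\A^*$ under the choice of $T_\Delta$ do the combinatorial work. You are also right that neither argument controls the lower determinantal divisors, so both give $|\A^*|=|\B^*|$ but leave \cjref{black=white} open.
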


\begin{proof}
To prove this, it suffices to show that $\per(\M^*)=\per(\MB^*)$.  Let
$D$ be the dual of the triangulation $\G$.  Let $X$ be the set of
matchings of size $|W|-1$ in $D$.  We count $|X|$ in two different
ways.  Note that $\per(\M^*)$ counts bijections between white faces
$f$ of $\G$ (other than the special white triangle $T_\Delta$) and
vertices $v_f$ of $\G$ (other than those of $T_\Delta$). We can
produce an element of $X$ by matching each $f$ with the black face
that is adjacent to $f$ and does not contain $v_f$. So, for each
vertex $v$ of $D$ corresponding to a white face $f$ of $\G$,
$\per(\M^*)$ is the number of $(|W|-1)$-matchings that do not cover
$v$ (this number is independent of our choice of $v$, by
\cref{c:allwhite}).  So $|X|=|W|\per(\M^*)$. Similarly, considering
the black triangles, $|X|=|B|\per(\MB^*)$. As $|W|=|B|$, the result
follows.
\end{proof}

\begin{figure}[h]
\centerline{(a)\lower70pt\hbox{\includegraphics[height=130pt]{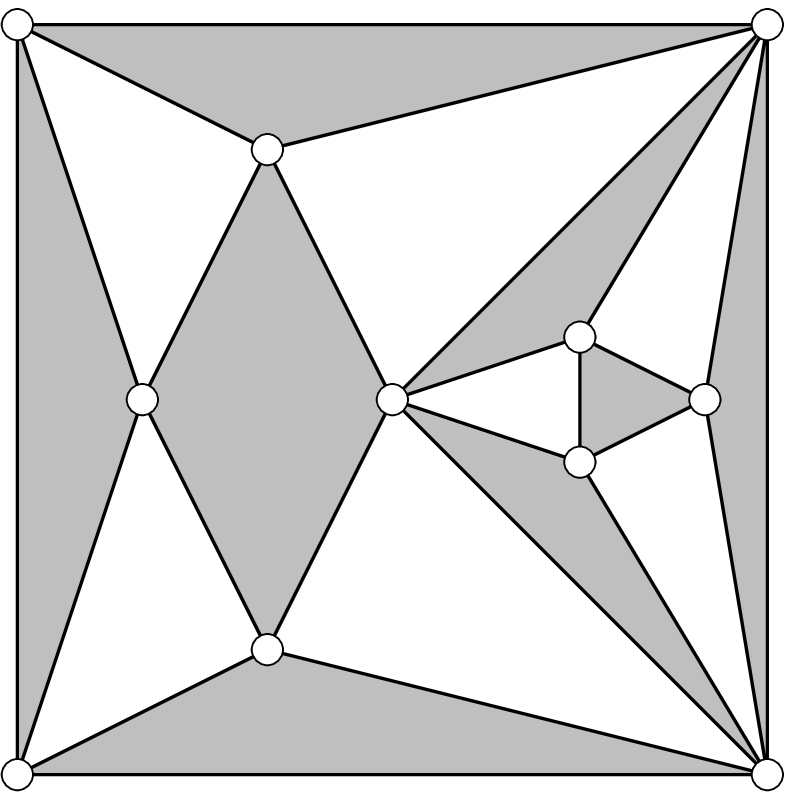}}
\qquad
(b) \lower70pt\hbox{\includegraphics[height=130pt]{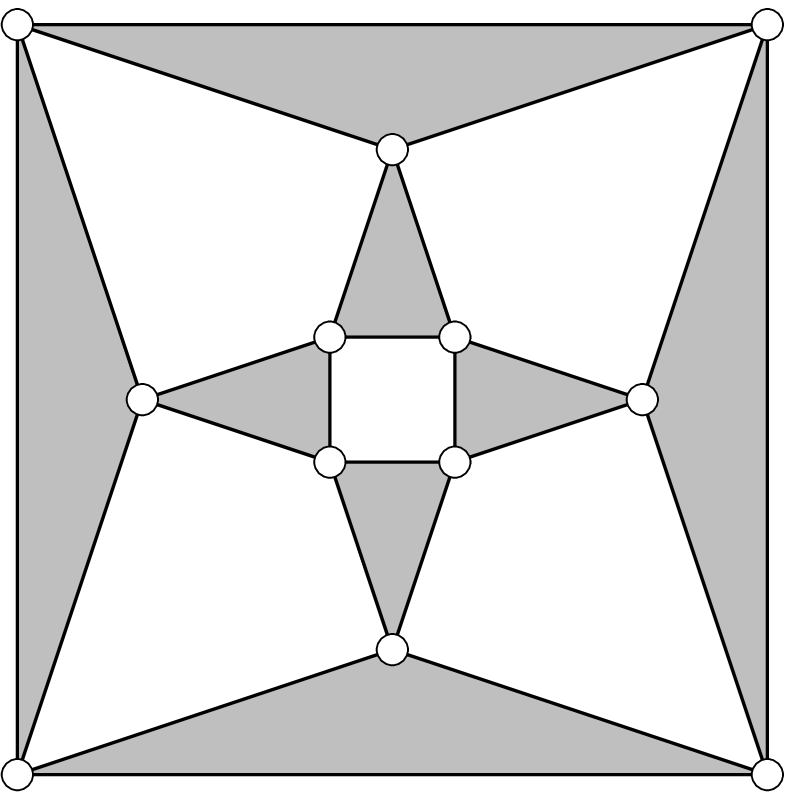}}}
\caption{\label{f:notri}Two face 2-colourable plane graphs}
\end{figure}

Several plausible generalisations of \cjref{black=white} do not hold.
\egref{eg:onegrp} shows that it does not hold for 2-colourable
triangulations of the torus. Also, if we stay in the plane but relax
the condition that faces must be triangular, the analogue of
\cjref{black=white} fails.
\fref{f:notri} shows two face 2-colourable plane graphs for which 
$\A\not\iso\B$. For graph (a), 
$\A\iso\Z_2\oplus\Z^3$ and $\B\iso\Z_4\oplus\Z^3$,
while for graph (b), $\A\iso\Z_2\oplus\Z^6$ and $\B\iso\Z^5$.

Also, we might drop the condition that indeterminates commute.  We
could label each vertex of $\G$ with a generator and take relations
which say that the product of the vertex labels in a clockwise
direction around each white triangle is the identity. We can still, on
the basis of \lref{l:000}, select a special white triangle and set its
labels to the identity. Suppose the group so produced is called $\NW$,
and the corresponding group for black triangles is $\NB$. It is not
always the case that $\NW\iso\NB$. A concrete example is the bitrade
in \egref{eg:noncyc}, for which $\NW=\<x,y\mid x^2=y^2,y^4=1\>$ and
$\NB=\<x,y\mid x^2=1,y^2xy^2=x\>$ (where we use 1 to denote the
identity).  Let $H_W$ and $H_B$ be the normal subgroups generated by the
squares of all the elements in $\NW$ and $\NB$ respectively. Then
$\NW/H_W\iso\NB/H_B\iso\<x,y\mid x^2=y^2=(xy)^2=1\>\iso\Z_2\oplus\Z_2$.
However, by a Reidemeister-Schreier rewriting process we find that
$H_W\iso\Z_2\oplus\Z\not\iso\Z\oplus\Z\iso H_B$, from which it follows
that $\NW\not\iso\NB$.

\section{Closing remarks}

Spherical latin bitrades are equivalent to 2-colourable
triangulations of the plane. We have shown (\tref{t:embeds}) that for
every spherical latin bitrade $(W,B)$ there is a copy of $W$ in a
finite abelian group $\A^*$. We showed that $\A^*$ is the torsion
subgroup of the group $\A$ defined by labelling the vertices of the
triangulation with commuting indeterminates and adding relations that
say the indeterminates around any white triangle add to the identity.

The presentation matrix $\M^*$ for $\A^*$ has the unusual
property that $\per(\M^*)=|\det(\M^*)|$ (\tref{t:detper}).  The
embedding of $W$ in $\A^*$ can be found in polynomial time.  Using
relations from the black triangles instead of white triangles produces
an abelian group $\B^*$. We have shown that $\A^*$ and $\B^*$ have the
same order (\tref{t:sameord}), and we conjecture (\cjref{black=white})
that they are isomorphic.

Aside from this conjecture a number of interesting open problems
remain.
\begin{itemize}
\item[Q1] Which abelian groups arise as $\A^*$ for some spherical
latin trade $W$? We saw in \sref{s:main} that every cyclic group does,
and so do at least some groups of large torsion rank. However
$\Z_2\oplus\Z_2$ does not, as can easily be checked by noting that it
contains no spherical trades other than intercalates.

\item[Q2] Can the spherical latin trades which embed in cyclic groups
be characterised by some property of their triangulations?

\item[Q3] Is there a family of trades $W$ for which $|\A^*|$ 
grows exponentially in $|W|$?

\item[Q4] What is the best algorithm for determining the minimal
embedding for a trade $W$ (that is, the abelian group of minimal order
such that $W$ can be embedded in the group)?

\item[Q5] For a trade $W$, how different can the minimal embedding
$E_W$ be from the canonical embedding in $\A^*$? Can $|\A^*|\big/|E_W|$
be arbitrarily large? Can the torsion rank for $\A^*$ be arbitrarily much
higher than the torsion rank of $E_W$? (We know from \egref{eg:diff}
that it can be higher.)

\item[Q6] Among the $(0,1)$ matrices of order $n$ for which the
permanent and determinant agree, what is the largest value that the
permanent can take and what structure of matrix achieves that
maximum? It might be helpful to restrict the class of matrices by placing
conditions on the row or column sums.  Answering such questions may
improve the upper bound in \cref{c:Afinite}, which is related to Q3.

\end{itemize}

There are many other interesting directions in which this research
could be taken. For example, one could (a)~look for other interesting
classes of trades/PLS that embed in groups, (b)~find new necessary or
sufficient conditions that govern whether a trade/PLS can embed in a
group, (c)~study groups defined on two face colourable plane graphs
other than triangulations, (d) consider infinite triangulations/trades
or (e)~investigate the relationship between
the groups $\NW$ and $\NB$ as introduced at the end of the previous
section (do they always have the same order?).  \egref{eg:torsion}
shows that $\NW$, $\NB$ will display behaviour that we have not seen
in the abelian case.

  \let\oldthebibliography=\thebibliography
  \let\endoldthebibliography=\endthebibliography
  \renewenvironment{thebibliography}[1]{%
    \begin{oldthebibliography}{#1}%
      \setlength{\parskip}{0.2ex}%
      \setlength{\itemsep}{0.2ex}%
  }%
  {%
    \end{oldthebibliography}%
  }

\subsection*{Acknowledgements}

The authors are very grateful to Iain Aitchison for interesting
discussions, in particular, the tantalising suggestion of connections
between \cjref{black=white} and knot theory. We are also indebted to
the members of the online group-pub-forum, and particularly to Mike
Newman, for help with manipulating presentations for non-abelian groups.

We are also very grateful to Ale\v s Dr\'apal for alerting us after
submission to some important and highly relevant work that we had
overlooked. Our Lemmas \ref{l:000}, \ref{l:dropZ} and \ref{l:AAstar}
can be deduced from results in \cite{DrKe2} and \cite{DrKe3}, although
the terminology of those papers is somewhat different to the present
work.  Lemma~4.4 of \cite{DrKe4} proves an exponential bound similar
to our \cref{c:Afinite}, but with a better base constant. Indeed,
\cite{DrKe2,DrKe3,DrKe4} anticipate several of the pivotal ideas in
our present investigation, including using entries of a partial latin
square to create relations in a group, using those relations to write
down a $(0,1)$-matrix and investigating conditions under which that
matrix has non-zero determinant.  Finally, it seems that in \cite{DHK}
our \tref{t:main} has been obtained independently using a quite
different, and more geometrical argument.

\end{document}